\documentclass[11pt]{article}
\usepackage[utf8]{inputenc}
\usepackage{graphicx}
\usepackage{datetime}
\usepackage{tikz-cd}
\usetikzlibrary{calc, decorations.markings, decorations.pathreplacing}
\usetikzlibrary{arrows.meta} 
\usepackage{forest}
\usepackage{float}

\usepackage{amssymb}
\usepackage{mathtools}

\usepackage{capt-of}
\usepackage{wrapfig}
\usepackage{printlen}

\usepackage{enumitem}

\usepackage[only,llbracket,rrbracket]{stmaryrd}

\usepackage{amsmath,amssymb,amsfonts,mathrsfs,amsthm}
\usepackage{geometry}
\usepackage{booktabs}

\usepackage[table]{xcolor}

\usepackage[ruled,vlined]{algorithm2e}

\usetikzlibrary{positioning}
 
\usepackage{hyperref}
\hypersetup{
   colorlinks,breaklinks,
            urlcolor=[RGB]{13, 152, 186},
            linkcolor=[RGB]{13, 152, 186},
            citecolor=[RGB]{13, 152, 186}}

\usepackage{subfiles}
\theoremstyle{plain}
\renewcommand{\itshape}{\slshape}

\newtheorem{theorem}{\sc Theorem}[section]
\newtheorem{corollary}[theorem]{\sc Corollary}
\newtheorem{lemma}[theorem]{\sc Lemma}
\newtheorem{proposition}[theorem]{\sc Proposition}
\newtheorem{claim}{\sc Claim}

\theoremstyle{definition}
\newtheorem{definition}[theorem]{\sc Definition}
\newtheorem{example}[theorem]{\sc Example}
\newtheorem{remark}[theorem]{\sc Remark}

\DeclareMathOperator{\Hom}{Hom}

\DeclareMathOperator{\im}{im}

\let\phi\varphi
\let\epsilon\varepsilon

\definecolor{teal}{RGB}{0, 200, 200} 
\definecolor{lightblue}{RGB}{0, 100, 100}
\definecolor{amber}{RGB}{230, 127, 3}
\definecolor{myblue}{HTML}{fb5607}
\definecolor{red}{RGB}{200,0,0}

\newcommand{\field}{k}

\usepackage{titling} 

\newcommand{\ldeg}{\deg^l}
\newcommand{\rdeg}{\deg^r}

\newcommand{\lvl}{\mathrm{level}}

\newcommand{\V}{\mathbb{V}}

\newcommand{\T}{\mathcal{T}}

\newcommand{\M}{\mathcal{M}}
\newcommand{\Ss}{\mathcal{S}}

\newcommand{\Sb}{\mathbb{S}}
\newcommand{\mylim}{\varprojlim}
\newcommand{\mycolim}{\varinjlim}

\DeclareMathOperator{\merge}{merge}

\usepackage{tcolorbox}
\tikzset{
     base_style/.style={
        thick,
        shorten >=-0.1pt,
        shorten <=-0.1pt,
      },
  f_arrow/.style={
    base_style,
    postaction={
      decorate,
      decoration={
        markings,
        mark=at position 0.6 with {\arrow{stealth}}
      }
    }
  },
  b_arrow/.style={
    base_style,
    postaction={
      decorate,
      decoration={
        markings,
        mark=at position 0.4 with {\arrowreversed{stealth}}
      }
    }
  },
   my_arrow/.style n args={2}{
        base_style,
        postaction={
            decorate,
            decoration={
                markings,
                mark=at position #1 with {\arrow[scale=#2]{stealth}}
            }
        }
    }
}

\forestset{
  splittin_up/.style={
    for tree={
      calign=first,
      s sep=\fpeval{0.5 * \ssepvalue}
    }
  }
}

\pgfkeys{
 /mynode/.is family, /mynode,
 pfill/.initial=none,      
 isep/.initial=2pt,        
 name/.initial=,           
 fill/.initial=none,       
 x/.initial=0,             
 y/.initial=0,             
 xs/.initial=1,            
 ys/.initial=1,            
 lpos/.initial=above,      
 ltxt/.initial=,           
}

\NewDocumentCommand{\mynode}{ o m }{%
  \pgfkeys{/mynode, #2}
  \node[
    circle, draw,
    inner sep=\pgfkeysvalueof{/mynode/isep},
    name=\pgfkeysvalueof{/mynode/name},
    fill=\pgfkeysvalueof{/mynode/fill}, 
    label={\pgfkeysvalueof{/mynode/lpos}:\pgfkeysvalueof{/mynode/ltxt}},
    path picture={
      \IfValueT{#1}{
        \fill[#1] 
          (path picture bounding box.south) 
          rectangle 
          (path picture bounding box.north east);
      }
    }
  ] at (\fpeval{\pgfkeysvalueof{/mynode/x} * \pgfkeysvalueof{/mynode/xs}}, 
        \fpeval{\pgfkeysvalueof{/mynode/y} * \pgfkeysvalueof{/mynode/ys}}) {};
}

\tikzset{
     base_style/.style={
        thick,
        shorten >=-0.1pt,
        shorten <=-0.1pt,
      },
  f_arrow/.style={
    base_style,
    postaction={
      decorate,
      decoration={
      markings,
      mark=at position 0.7 with {\arrow[scale=1.3]{stealth}}
    }
    }
  },
  b_arrow/.style={
    base_style,
    postaction={
      decorate,
      decoration={
        markings,
        mark=at position 0.4 with {\arrowreversed[scale=1.3]{stealth}}
      }
    }
  },
   short_style/.style={
    thick,
    shorten >=75pt,
    shorten <=75pt,
      postaction={
            decorate,
            decoration={
                markings,
                mark=at position 0.63 with {\arrow[scale=1.6]{stealth}}
            }
        }
  },
   my_arrow/.style n args={2}{
        base_style,
        postaction={
            decorate,
            decoration={
                markings,
                mark=at position #1 with {\arrow[scale=#2]{stealth}}
            }
        }
    }
}

\definecolor{beige}{RGB}{255, 240, 210}
\definecolor{my_red}{RGB}{104, 36, 109}
\definecolor{my_teal}{RGB}{127, 185, 183}
\definecolor{my_grey}{RGB}{115, 110, 104}

\definecolor{my_blue}{RGB}{100,143,255}
\definecolor{my_pink}{RGB}{220,38,127}
\definecolor{my_gold}{RGB}{255,176,0}
\definecolor{my_violet}{RGB}{150, 120, 200}

\NewDocumentCommand{\myynode}{mmmmmmmmO{none}}{\node[ circle, draw, inner sep=3, name=#2, 
fill=#3 ] at (\fpeval{#5 * #7},\fpeval{#6 * #8}) {}; }

\definecolor{my_actual_red}{RGB}{240, 117, 105}

\definecolor{my_mustard}{RGB}{248, 217, 70}

\author{\Large David Lanners\thanks{\texttt{david.lanners@durham.ac.uk}} \\ \large Durham University}

\newcommand{\htest}[1]{\hyperref[#1]{\ref*{#1}}}

\usepackage[
backend=biber,
style=numeric,
maxbibnames=99,
sorting=none
]{biblatex}
\makeatletter
\def\moverlay{\mathpalette\mov@rlay}
\def\mov@rlay#1#2{\leavevmode\vtop{%
   \baselineskip\z@skip \lineskiplimit-\maxdimen
   \ialign{\hfil$\m@th#1##$\hfil\cr#2\crcr}}}
\newcommand{\charfusion}[3][\mathord]{
    #1{\ifx#1\mathop\vphantom{#2}\fi
        \mathpalette\mov@rlay{#2\cr#3}
      }
    \ifx#1\mathop\expandafter\displaylimits\fi}
\makeatother

\newcommand{\cat}[1]{\mathbf{#1}}

\newcommand{\expspacing}{\mkern 2mu}

\usepackage{musicography}
\usepackage{xpatch}

\makeatletter
\renewcommand{\itshape}{\slshape}
\makeatother
\makeatletter
\renewcommand\th@plain{\slshape}
\xpatchcmd{\proof}{\itshape}{\slshape}{}{}
\makeatother

\usepackage{graphicx}
\usepackage{subcaption}
\usepackage[justification=raggedright,singlelinecheck=false]{caption}

\usepackage{cancel}

 \date{}

\usepackage{titlesec}
\titlelabel{\thetitle.\quad}

\title{\vspace{-3.0cm} \huge From Frames to Features: Scalable Zigzag\\ Persistence for Binary Video}

\usepackage{adjustbox}

\usepackage{parskip}
\makeatletter
\def\thm@space@setup{%
  \thm@preskip=\parskip \thm@postskip=0pt
}
\makeatother

\newlength{\myDisplaySkip}
\expandafter\def\expandafter\normalsize\expandafter{%
    \normalsize%
    \setlength{\abovedisplayskip}{\myDisplaySkip}%
    \setlength{\belowdisplayskip}{\myDisplaySkip}%
    \setlength{\abovedisplayshortskip}{0pt}
    \setlength{\belowdisplayshortskip}{\myDisplaySkip}%
}

\setlist[enumerate]{
  itemsep=2pt,
  parsep=2pt,
  topsep=0pt
}

\setlist[itemize]{
  itemsep=2pt,
  parsep=2pt,
  topsep=0pt
}

\begin{document}
\maketitle











\begin{abstract}
Zigzag persistence tracks topological features in spatio-temporal data through combinatorial invariants called barcodes. For binary videos, existing methods are bottlenecked by the construction of prohibitively large cubical complexes and performing Gaussian elimination on large boundary matrices, rendering high-resolution videos out of reach. We show that the $H_0$ and $H_1$ barcodes can be extracted directly from connected-component dynamics. By encoding these dynamics in a graph, we bypass cubical complexes entirely and are able to leverage the near-linear time barcode decomposition algorithm by Dey and Hou~\cite{deyComputingZigzagPersistence2021}, leading to significant speedups. The total runtime of our pipeline is dominated by the construction of the underlying graph structures, which scales linearly with pixel count and is embarrassingly parallel across frames, ensuring excellent scalability. We demonstrate how this approach enables zigzag persistence on 4k video at real-time rates on consumer hardware.
\end{abstract}

\section{Introduction}

Binary videos (sequences of black-and-white frames) arise naturally in many settings where data can be segmented into foreground and background regions. They broadly fall into two categories: temporal evolution, such as cell tracking~\cite{odaPersistentHomologicalCell2023, garcia-redondoFastTopologicalSignal2024}, neural activity imaging~\cite{billingsSimplicialTopologicalDescriptions2021}, or dynamic pattern formation (e.g., reaction-diffusion systems); and analysis of volumetric data via sequential 2D slicing (e.g., sequential slices of porous materials~\cite{songGeneralizedMorseTheory2025} or CT and electron microscopy data~\cite{francoisTrainFreeSegmentationMRI2025, tanabeHomologicalApproachMathematical2021, wangSemiautomaticMethodExtracting2024}). In both settings, capturing the evolution of topological features presents a core challenge: quantifying how connected components and loops appear, split, merge, and disappear across the frames constitutes a hard problem.

Topological Data Analysis (TDA) provides many tools to quantify changing topology~\cite{tanweerTopologicalFrameworkIdentifying2024, munchApplicationsPersistentHomology2013a, oesterlingComputingVisualizingTimeVarying2017a, pereaSlidingWindowsPersistence2015, kimSpatiotemporalPersistentHomology2021, edelsbrunnerTimevaryingReebGraphs2008, cohen-steinerVinesVineyardsUpdating2006, hajijVisualDetectionStructural2018}; however, a naïve application of its most common variant, persistent homology~\cite{edelsbrunnerTopologicalPersistenceSimplification2002, otterRoadmapComputationPersistent2017, zomorodianComputingPersistentHomology2005}, is not well-suited to this setting. It is often constructed from a filtration (a nested sequence of spaces), whereas the sequence of frames of a video does not generally represent a filtered space.

Zigzag persistence~\cite{carlssonZigzagPersistence2010} adapts persistent homology to sequences of spaces with both forward and backward inclusions, and thus provides a natural setting for studying topological changes in non-monotone sequences. For binary videos, such a sequence can be constructed by inserting suitable intermediate comparison frames~\cite{mcdonaldZigzagPersistenceCoral2023}, consisting of either the union or the intersection of consecutive frames. In recent years, zigzag-based methods have been applied to a wide range of time-varying systems, including the analysis of dynamic and temporal graphs~\cite{kimAnalysisDynamicGraphs2020, myersTemporalNetworkAnalysis2023, myersTopologicalAnalysisTemporal2023}, the study of combinatorial dynamical systems~\cite{deyTrackingDynamicalFeatures2022}, and the evolution of biological and ecological patterns~\cite{glennTrackingTemporalEvolution2025, mcdonaldZigzagPersistenceCoral2023, yangTopologicalClassificationTumourimmune2025}.

Despite its conceptual appeal for spatiotemporal problems, zigzag persistence poses significant computational challenges. Existing approaches to binary videos typically rely on cubical complexes~\cite{mcdonaldZigzagPersistenceCoral2023, divasonZigzagPersistenceImage2024}, whose sizes are of order comparable to the number of pixels, leading to prohibitive memory requirements. Even though there has been much progress in speeding up zigzag computations~\cite{milosavljevicZigzagPersistentHomology2011, mariaDiscreteMorseTheory2024, deyFastComputationZigzag2022}, existing methods for binary video present cubic time complexity in the number of cells, rendering such methods impractical and effectively restricting the applicability to very low-resolution data and videos comprising only a small number of frames. As a result, these restrictions have hindered the broader adoption of zigzag persistence into the toolkit of TDA methods.

This work removes the computational bottleneck and makes zigzag persistence a practical tool for binary video data. Our approach departs from cubical complexes and instead reduces the issue to a graph problem. In a first step, we focus on the computation of the $H_0$-zigzag persistence, which tracks the connectivity of the data by monitoring how components appear, merge, and split across frames. By clustering connected components within each frame (which is an embarrassingly parallel task, scaling linearly in the number of pixels), we reduce the problem to a graph setting, where maps between frames are simple: tracking a single representative pixel per cluster suffices. This data can be compactly represented in graphs, called \textit{formigrams}~\cite{MemoliExtractingPersistentClustersinDynamicData}.

\begin{figure}[H]
\centering
\begin{tikzpicture}[xscale=0.5]
    \def\scale{1}
    \def\myheight{4}
    \def\lowerdiagramy{-6} 

    \node[font=\Large, anchor=center, align=center] at (-15, \myheight) {Intermediate \\union frames};
    \node[font=\Large, anchor=center, align=center] at (-15, 0) {Original \\ video};

    \node at (-9,0) {\includegraphics[scale=\scale]{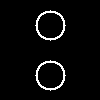}};
    \node at (-6,\myheight) {\includegraphics[scale=\scale]{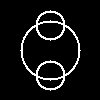}};
    \node at (-3,0) {\includegraphics[scale=\scale]{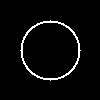}};
    \node at (0,\myheight) {\includegraphics[scale=\scale]{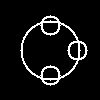}};
    \node at (3,0) {\includegraphics[scale=\scale]{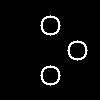}};
    \node at (6,\myheight) {\includegraphics[scale=\scale]{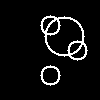}};
    \node at (9,0) {\includegraphics[scale=\scale]{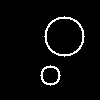}};

    \draw[short_style] (-9,0) to (-6, \myheight);
    \draw[short_style] (-3,0) to (-6, \myheight);
    \draw[short_style] (-3,0) to (0, \myheight);
    \draw[short_style] (3,0) to (0, \myheight);
    \draw[short_style] (3,0) to (6, \myheight);
    \draw[short_style] (9,0) to (6, \myheight);

    \begin{scope}[shift={(-2,\lowerdiagramy)}, xscale=1.3, yscale=0.9] 
        
        \def\xscale{0.9pt}
        \def\yscale{1pt}
        \def\nodesize{1pt}
        \def\shifttwo{-1.5}

        \node[anchor=center, font=\LARGE, align=center] at (-5+\shifttwo, 3) {Formigram};
        
        \myynode{\nodesize}{a_1_1}{white}{}{\fpeval{-9+\shifttwo}}{-1.5}{\xscale}{\yscale}
        \myynode{\nodesize}{a_1_2}{my_teal}{}{\fpeval{-9+\shifttwo}}{1.5}{\xscale}{\yscale}
        \myynode{\nodesize}{b_1_1}{white}{}{\fpeval{-7.5+\shifttwo}}{0}{\xscale}{\yscale}
        \myynode{\nodesize}{c_1_1}{white}{}{\fpeval{-6+\shifttwo}}{0}{\xscale}{\yscale}
        \myynode{\nodesize}{d_1_1}{white}{}{\fpeval{-4.5+\shifttwo}}{0}{\xscale}{\yscale}
        \myynode{\nodesize}{e_1_1}{my_actual_red}{}{\fpeval{-3+\shifttwo}}{1.5}{\xscale}{\yscale}
        \myynode{\nodesize}{e_1_2}{white}{}{\fpeval{-3+\shifttwo}}{0}{\xscale}{\yscale}
        \myynode{\nodesize}{e_1_3}{my_mustard}{}{\fpeval{-3+\shifttwo}}{-1.5}{\xscale}{\yscale}
        \myynode{\nodesize}{f_1_1}{white}{}{\fpeval{-1.5+\shifttwo}}{1.5}{\xscale}{\yscale}
        \myynode{\nodesize}{f_1_2}{my_mustard}{}{\fpeval{-1.5+\shifttwo}}{-1.5}{\xscale}{\yscale}
        \myynode{\nodesize}{g_1_1}{white}{}{\fpeval{0+\shifttwo}}{1.5}{\xscale}{\yscale}
        \myynode{\nodesize}{g_1_2}{my_mustard}{}{\fpeval{0+\shifttwo}}{-1.5}{\xscale}{\yscale}

        \draw[my_grey, dashed] (0,3) -- (0,-3);
        \draw[my_grey] (-12,4) -- (-0.5,4) -- (0,3.7) -- (0.5,4) -- (10.5,4);

        \draw[f_arrow] (a_1_1) to (b_1_1);
        \draw[f_arrow] (a_1_2) to (b_1_1);
        \draw[f_arrow] (c_1_1) to (b_1_1);
        \draw[f_arrow] (c_1_1) to (d_1_1);
        \draw[f_arrow] (e_1_1) to (d_1_1);
        \draw[f_arrow] (e_1_2) to (d_1_1);
        \draw[f_arrow] (e_1_3) to (d_1_1);
        \draw[f_arrow] (e_1_1) to (f_1_1);
        \draw[f_arrow] (e_1_2) to (f_1_1);
        \draw[f_arrow] (e_1_3) to (f_1_2);
        \draw[f_arrow] (g_1_1) to (f_1_1);
        \draw[f_arrow] (g_1_2) to (f_1_2);

        \def\myshift{2}
        \node[anchor=center,align=center, font=\LARGE] at (\fpeval{3+\myshift}, 3) {Barcode};

        \myynode{\nodesize}{ba_1_1}{white}{}{\fpeval{0+\myshift}}{0}{\xscale}{\yscale}
        \myynode{\nodesize}{ba_1_2}{my_teal}{}{\fpeval{0+\myshift}}{0.75}{\xscale}{\yscale}
        \myynode{\nodesize}{bb_1_1}{white}{}{\fpeval{1.5+\myshift}}{0}{\xscale}{\yscale}
        \myynode{\nodesize}{bc_1_1}{white}{}{\fpeval{3+\myshift}}{0}{\xscale}{\yscale}
        \myynode{\nodesize}{bd_1_1}{white}{}{\fpeval{4.5+\myshift}}{0}{\xscale}{\yscale}
        \myynode{\nodesize}{be_1_1}{white}{}{\fpeval{6+\myshift}}{0}{\xscale}{\yscale}
        \myynode{\nodesize}{be_1_2}{my_actual_red}{}{\fpeval{6+\myshift}}{1.5}{\xscale}{\yscale}
        \myynode{\nodesize}{be_1_3}{my_mustard}{}{\fpeval{6+\myshift}}{-1.5}{\xscale}{\yscale}
        \myynode{\nodesize}{bf_1_1}{white}{}{\fpeval{7.5+\myshift}}{0}{\xscale}{\yscale}
        \myynode{\nodesize}{bf_1_2}{my_mustard}{}{\fpeval{7.5+\myshift}}{-1.5}{\xscale}{\yscale}
        \myynode{\nodesize}{bg_1_1}{white}{}{\fpeval{9+\myshift}}{0}{\xscale}{\yscale}
        \myynode{\nodesize}{bg_1_2}{my_mustard}{}{\fpeval{9+\myshift}}{-1.5}{\xscale}{\yscale}

        \draw[base_style] (ba_1_1) to (bb_1_1);
        \draw[base_style] (bb_1_1) to (bc_1_1);
        \draw[base_style] (bc_1_1) to (bd_1_1);
        \draw[base_style] (bd_1_1) to (be_1_1);
        \draw[base_style] (be_1_1) to (bf_1_1);
        \draw[base_style] (be_1_3) to (bf_1_2);
        \draw[base_style] (bf_1_1) to (bg_1_1);
        \draw[base_style] (bf_1_2) to (bg_1_2);
    \end{scope}
\end{tikzpicture}
\captionof{figure}{\textbf{Pipeline overview}. Intermediate frames are generated by taking the union of white regions between original video frames. Connected components in each frame form the vertices of a formigram, while directed edges represent their evolution through intermediate frames. Finally, we compute the corresponding zigzag barcode in near-linear time.}\label{fig:formi-first}
\end{figure}

From these graphs, we extract the barcode in near-linear time by building on the work of Dey and Hou~\cite{deyComputingZigzagPersistence2021}. To compute the $H_1$ zigzag persistence, which tracks the topology of cycles, we use Alexander duality~\cite{MR4454782}, allowing the $H_1$-zigzag computation to be reduced to an $H_0$ zigzag computation on the complementary (opposite-colour) complex. In this way, we can compute both $H_0$ and $H_1$ zigzag persistence of a binary video in near-linear time with respect to the number of features. For general discussions on representations of the $H_0$-functor, especially in the context of multiparameter persistence, we refer to~\cite{BAUER2020107171, bauerAdditiveImage0th2025, binduaDecomposingZerodimensionalPersistent2024, brodzkiComplexityZerodimensionalMultiparameter2020}.

\paragraph{Implementation.}
We provide an open-source implementation of the algorithms presented in this paper in the software package \texttt{Zigvid}~\cite{ZigVidSoftwareRef}. The core algorithms are implemented in Rust to achieve high performance, and optional Python bindings are provided to facilitate integration with existing scientific Python workflows.

\paragraph{Outline.}
Section~\ref{sec:foundations} introduces the topology associated with binary videos and recalls the basics of zigzag persistence. In Section~\ref{sec:formigrams-chapter}, we introduce the graph structures underlying $H_0$-zigzag modules and show in Section~\ref{sec:field-indep} that the resulting $H_0$-persistence is independent of the choice of coefficient field. Section~\ref{sec:frames-to-formi} presents a linear-time algorithm, in the number of pixels, for constructing these graph structures from a binary video. In Section~\ref{sec:int-decomp}, we recall the algorithm of Dey and Hou for computing the interval decomposition of $H_0$-zigzag modules. Section~\ref{sec:dualities} introduces two duality principles: a Mayer-Vietoris duality of~\cite{carlssonZigzagPersistence2010} relating the union and intersection constructions, and Alexander duality relating $H_0$- and $H_1$-zigzag persistence. Finally, our findings in Section~\ref{sec:benchmarking} show that by shifting the perspective from cubical complexes to sparse graphs, zigzag persistence becomes a viable tool for real-time and high-resolution video analysis.

\section{Foundations of Persistence for Binary Video}\label{sec:foundations}

 This section establishes the necessary foundations. We begin with the pixel-level topology used to represent foreground and background regions, then introduce the algebraic framework of zigzag modules used to track features across non-monotonic sequences of spaces. Finally, we discuss the computational challenges that standard methods face when confronted with high-resolution video data. 

\subsection{The Topology of Binary Images}\label{sec:top-bin-imag}
Let $\Omega_{w,h}=\{1,\dots, w\}\times\{1,\dots,h\}\subset\mathbb{Z}^2$ be a finite grid. Consider a map
\[
I : \Omega_{w,h} \to \{ 0, 1\},
\]
\noindent
\adjustbox{valign=t}{%
\begin{minipage}{0.68\textwidth}
where each point of $\Omega_{w,h}$ corresponds to a \emph{pixel}. We interpret the set $I^{-1}(1)$ as \emph{foreground pixels} and $I^{-1}(0)$ as \emph{background pixels}. Because the grid $\Omega_{w,h}$ is discrete, modelling the topology of these pixel sets requires an explicit choice of pixel adjacency. Two conventions are common:
\emph{4-connectivity}, where only vertical and horizontal neighbours are adjacent, and
\emph{8-connectivity}, where diagonal neighbours are also included. The
difference is illustrated to the right: solid edges represent 4-connectivity, while the dashed edges indicate the additional diagonal links present in 8-connectivity.
\end{minipage}}%
\hfill
\adjustbox{valign=t}{%
\begin{minipage}{0.28\textwidth}
\centering
\vspace{1.1em} 
\begin{tikzpicture}[scale=1.7]
    \coordinate (Center) at (0,0);

    \foreach \x in {-1,0,1} {
        \foreach \y in {-1,0,1} {
            \ifnum\x=0 \ifnum\y=0 \else
                \coordinate (P\x\y) at (\x,\y);
            \fi \else
                \coordinate (P\x\y) at (\x,\y);
            \fi
        }
    }

    \draw[thick] (Center) -- (0,1);
    \draw[thick] (Center) -- (0,-1);
    \draw[thick] (Center) -- (1,0);
    \draw[thick] (Center) -- (-1,0);

    \draw[thick,dashed] (Center) -- (1,1);
    \draw[thick,dashed] (Center) -- (-1,1);
    \draw[thick,dashed] (Center) -- (1,-1);
    \draw[thick,dashed] (Center) -- (-1,-1);

    \foreach \x in {-1,0,1}{
        \foreach \y in {-1,0,1}{
            \ifnum\x=0 \ifnum\y=0 \else
                \fill (P\x\y) circle (2.5pt);
            \fi \else
                \fill (P\x\y) circle (2.5pt);
            \fi
        }
    }

    \fill[red!20] (Center) circle (4.5pt);
    \fill[red] (Center) circle (2.5pt);
    \draw[red, thick] (Center) circle (4.5pt);
\end{tikzpicture}
\end{minipage}}

These conventions lead to different notions of connectedness: two diagonally positioned foreground pixels form a single component under 8-connectivity, but remain disconnected under 4-connectivity. It is a classical result~\cite{KOVALEVSKY, KongDigitalTopology} that in order to avoid connectivity paradoxes, such as the failure of the discrete analogue of the Jordan Curve Theorem, one needs to assign complementary adjacencies, i.e.\ (8,4) or (4,8), to the foreground and background. This ensures that the boundary of one colour partitions the other into distinct interior and exterior regions. We incorporate this need for complementary connectivities into our definition of binary images.

\begin{definition}
A \emph{binary image} is a map $I:\Omega_{w,h}\to \{0,1\}$ together with a choice of \emph{foreground connectivity} $\kappa_I \in \{4,8\}$. The complementary \emph{background connectivity} is defined by $\bar{\kappa}_I = 12 - \kappa_I$.
\end{definition}

In the realm of TDA~\cite{RobinsMorseFromGray, MR4454782, garinDualityPersistentHomology2020}, $4$-connectivity and $8$-connectivity are realised via two cubical complex constructions that can be associated with a binary image $I:\Omega_{w,h}\to\{ 0,1\}$. We recall these constructions briefly. Let $K\subset\mathbb{R}^2$ be the cubical complex generated by the 2-cells
\begin{equation} \label{eq:primal-cubes}
Q_{i,j}\coloneqq\left[i-\frac{1}{2}, i+\frac{1}{2}\right]\times\left[j-\frac{1}{2},j+\frac{1}{2}\right], \ \ \  \ (i,j)\in \Omega_{w,h}.\end{equation}
This definition centres each 2-cell of $K$ at the integer pixel coordinate $(i,j)\in \Omega_{w,h}$, naturally assigning the pixel value $I(i,j)$ to each top-dimensional cell $Q_{i,j}$. Alternatively, one can consider the dual complex $K^*$, whose vertices coincide with $\Omega_{w,h}$, and assign the pixel value $I(i,j)$ to each vertex $(i,j)$ of $K^*$. These dual interpretations induce two distinct subcomplexes of $K$ and $K^*$, respectively, modelling the two pixel adjacencies for a fixed colour $c\in\{0,1\}$:
\begin{itemize}[leftmargin=*]
\item \textbf{T-construction (8-connectivity)}: Defines the subcomplex
\[
T_I(c)\coloneqq \bigcup_{(i,j)\in I^{-1}(c)} \mathrm{cl}(Q_{i,j}) \subseteq K,
\]
assigning a 2-cell and its closure to each pixel of colour $c$. Diagonally adjacent pixels produce squares intersecting in one vertex, inducing 8-connectivity. 

\item \textbf{V-construction (4-connectivity)}: Defines the subcomplex
\[
V_I(c) \coloneqq \{ \sigma\in K^* \ | \ \text{$\sigma^{(0)}\subseteq I^{-1}(c)$}  \} \subseteq K^*,
\]
where $\sigma^{(0)}$ denotes the set of vertices of a cell $\sigma\in K^*$. The $V$-construction assigns a vertex to each $c$-coloured pixel, together with all edges and 2-cells, whose vertices fully lie in $I^{-1}(c)$. Since diagonal pixels do not share an edge in $K^*$, the V-construction induces 4-connectivity.
\end{itemize}

\begin{center}
\begin{tikzpicture}[scale=1.2]

    \begin{scope}[xshift=-4cm, yshift=1.5cm]
    \node at (0,1.5) {\Large $I$ };
        \node at (0,0) {\scalebox{1.5}{$\begin{bmatrix} 0 & 1 & 0 \\ 1 & 0 & 1 \\ 0 & 1 & 0 \end{bmatrix}$}};
    \end{scope}

    \begin{scope}[xshift=-1.5cm, ]
        \node at (1.5, 3.6) {\Large $K$};
         \node at (1.5, -0.5) {\color{red}{\Large $T_I(1)$}};
        
        \fill[red!20] (1,2) rectangle (2,3); 
        \fill[red!20] (0,1) rectangle (1,2); 
        \fill[red!20] (2,1) rectangle (3,2); 
        \fill[red!20] (1,0) rectangle (2,1); 
        

        \draw[thick, red] (1,0)--(1,3);
        \draw[thick, red] (2,0)--(2,3);
        \draw[thick, red] (0,1)--(3,1);
        \draw[thick, red] (0,2)--(3,2);

        \draw[thick, red] (1,0)--(2,0);
        \draw[thick, red] (0,1)--(0,2);
        \draw[thick, red] (1,3)--(2,3);
        \draw[thick, red] (3,1)--(3,2);

        \draw[thick, black] (0,0)--(0,1);
        \draw[thick, black] (0,0)--(1,0);
        \draw[thick, black] (0,3)--(0,2);
        \draw[thick, black] (0,3)--(1,3);
        \draw[thick, black] (3,3)--(3,2);
        \draw[thick, black] (3,3)--(2,3);
        \draw[thick, black] (3,0)--(3,1);
        \draw[thick, black] (3,0)--(2,0);

        \fill[red] (1,0) circle (2pt);
        \fill[red] (1,1) circle (2pt);
        \fill[red] (2,0) circle (2pt);
        \fill[red] (2,1) circle (2pt);

        \fill[red] (1,2) circle (2pt);
        \fill[red] (2,2) circle (2pt);
        \fill[red] (1,3) circle (2pt);
        \fill[red] (2,3) circle (2pt);

        \fill[red] (0,1) circle (2pt);
        \fill[red] (0,2) circle (2pt);
        \fill[red] (3,1) circle (2pt);
        \fill[red] (3,2) circle (2pt);
        
        \node[black] at (0.5, 2.5) {0}; \node[red] at (1.5, 2.5) {1}; \node[black] at (2.5, 2.5) {0};
        \node[red] at (0.5, 1.5) {1}; \node[black] at (1.5, 1.5) {0}; \node[red] at (2.5, 1.5) {1};
        \node[black] at (0.5, 0.5) {0}; \node[red] at (1.5, 0.5) {1}; \node[black] at (2.5, 0.5) {0};
    \end{scope}

    \begin{scope}[xshift=3cm, yshift=0.5cm]
        \node at (1, 2.75) {\Large $K^*$};
        \node at (1, -0.75) {\color{red}\Large $V_I(1)$};
        \draw[thick] (0,0) grid (2,2);
        \foreach \x in {0,...,2} \foreach \y in {0,...,2} \fill (\x,\y) circle (2pt);

        \fill[red] (1,2) circle (2.1pt); 
        \fill[red] (0,1) circle (2.1pt); 
        \fill[red] (2,1) circle (2.1pt); 
        \fill[red] (1,0) circle (2.1pt); 

        \node[above left] at (0,2) {0}; \node[above left, red] at (1,2) {1}; \node[above left] at (2,2) {0};
        \node[above left,red]  at (0,1) {1}; \node[above left] at (1,1) {0}; \node[above left, red] at (2,1) {1};
        \node[above left] at (0,0) {0}; \node[above left,red] at (1,0) {1}; \node[above left] at (2,0) {0};
    \end{scope}
\end{tikzpicture}
\captionof{figure}{\textbf{Cubical complex constructions induced by binary image $I$}: The $T$-construction on the foreground $T_I(1)\subseteq K$ realises 8-connectivity, while the $V$-construction $V_I(1)\subseteq K^*$ realises 4-connectivity. Note that $T_I(1)$ is homotopy equivalent to a circle, whereas $V_I(1)$ consists of four discrete points.}
\label{fig:2-complexes}
\end{center}

Both the $T$-construction and the $V$-construction will serve as models for the topology of the foreground and background. For a cubical complex $X\subseteq K$ or $X\subseteq K^*$, its geometric realisation is given by
\[
|X| \coloneq \bigcup_{\sigma\in X} \sigma \subseteq \mathbb{R}^2.
\]
To treat the topological spaces arising from both connectivity conventions $(4,8)$ and $(8,4)$ in a notationally uniform way, we introduce the following definition.

\begin{definition} \label{def:top-real}
Let $I$ be a binary image. We define the \textit{foreground} and \textit{background regions} of $I$ to be the following geometric realisations of subcomplexes of $K$ and $K^*$:
\[
(F_I,B_I)\coloneq
\begin{cases}
\big(\left|V_I(1)\right|,\;\left|T_I(0)\right|\big) & \text{if } \kappa_I=4,\\
\big(\left|T_I(1)\right|,\;\left|V_I(0)\right|\big) & \text{if } \kappa_I=8.
\end{cases}
\]
\end{definition}

\begin{example}

Figure~\ref{fig:conn} illustrates the effect of the complementary connectivity pairings $(8,4)$ and $(4,8)$ on the resulting foreground and background topology, as seen through their geometric realisations.
\end{example}

\begin{definition}
A \emph{binary video of resolution $(w,h)$} is a finite sequence of binary images 
\[
(I_i)_{i=1}^n, \qquad I_i : \Omega_{w,h} \to \{0,1\},
\]
sharing a common foreground connectivity $\kappa \in \{4,8\}$, i.e.\ $\kappa_{I_i}=\kappa$ for all $i$.
\end{definition}

As will become clear in Section~\ref{sec:curse-of-res}, realising an entire binary video as cubical complexes becomes infeasible in both memory and computation as the resolution of the video increases. We present a workaround in Section~\ref{sec:formigrams-chapter} by focusing on the evolution of connected components, providing us with a simple combinatorial representation of the topology of a binary video. Before proceeding, we introduce the algebraic framework of zigzag persistence to relate the topology of consecutive frames in a video.

\begin{figure}[htbp]
\captionsetup[subfigure]{justification=centering}
\centering
\begin{subfigure}[c]{0.3\textwidth}
\centering
\[
\resizebox{\textwidth}{!}{$
\begin{bmatrix}
0 & 1 & 1 & 0 & 0 & 1 & 1 \\
0 & 0 & 0 & 1 & 1 & 1 & 1 \\
1 & 0 & 1 & 1 & 0 & 0 & 0 \\
1 & 0 & 0 & 0 & 0 & 1 & 1 \\
0 & 1 & 1 & 0 & 1 & 1 & 0 \\
0 & 1 & 1 & 1 & 1 & 0 & 1 \\
0 & 0 & 0 & 1 & 0 & 1 & 1
\end{bmatrix}
$}
\]
\caption{Binary image $I$}
\end{subfigure}%
\hfill
\begin{subfigure}[c]{0.3\textwidth}
\centering
\includegraphics[width=\textwidth]{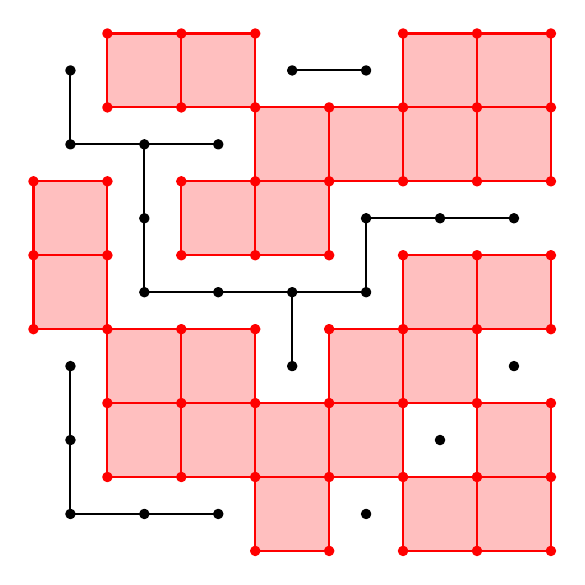}
\caption[V-construction on 1s, T-construction on 0s]{%
    \begin{tabular}{@{}r@{}}
        $F_I=\left|T_I(1)\right|$ \\ 
        $B_I=\left|V_I(0)\right|$ \\
    \end{tabular}}
\end{subfigure}%
\hfill
\begin{subfigure}[c]{0.3\textwidth}
\centering
\includegraphics[width=\textwidth]{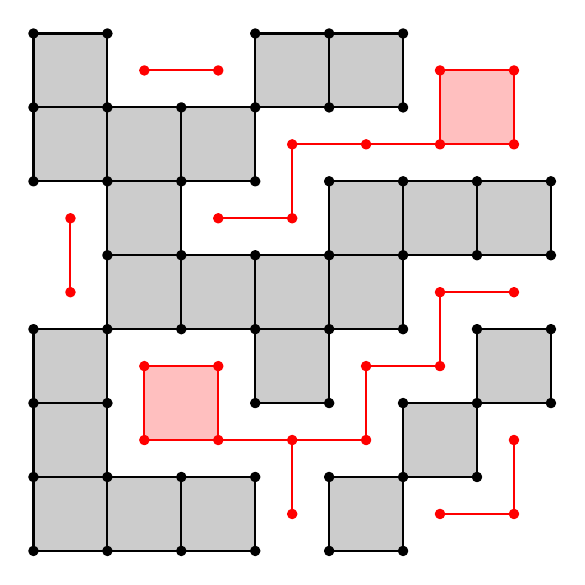}
\caption[V-construction on 1s, T-construction on 0s]{%
    \begin{tabular}{@{}r@{}}
        $F_I=\left|V_I(1)\right|$\\ 
        $B_I=\left|T_I(0)\right|$
    \end{tabular}}
\end{subfigure}

\caption{\textbf{Dual topology pairings} induced by a binary image $I$. 
Subfigures (b) and (c) correspond to the complementary connectivity choices $(\kappa_I,\bar{\kappa}_I)=(8,4)$ and $(4,8)$, respectively. In each case, the foreground and background are shown via the geometric realisations of the corresponding subcomplexes of $K$ and $K^*$, yielding distinct but consistent topologies.}
\label{fig:conn}
\end{figure}

\subsection{Zigzag Persistence}

Having established topological models for the individual frames, we now lay the foundations needed to tackle the challenge of relating these representations across a temporal sequence. In our setting of binary videos, coloured regions can appear, split, merge, or disappear entirely. In general, this precludes the use of standard persistent homology, which requires nested sequences of spaces. The theory of \emph{zigzag persistence}, first introduced and developed in~\cite{carlssonZigzagPersistence2010}, is uniquely tailored to such a scenario. In this section, we recall the necessary definitions.

Zigzag modules generalise standard persistence modules by allowing for arrows in both directions. To make this precise, we first introduce the underlying structure that allows us to encode directionality.



\begin{definition}
A \emph{zigzag type} $\tau$ is a directed graph on vertices
$\{1,\dots,|\tau|\}$ with exactly one edge between each consecutive
pair $(i,i+1)$, oriented according to a sequence
\[
\tau=(\tau_1,\dots,\tau_{|\tau|-1}),
\qquad
\tau_i\in\{\to,\leftarrow\}.
\]

\noindent The integer $|\tau|$ is called the \emph{length} of $\tau$. The associated \emph{path category} $\mathcal P_\tau$ has objects
$\{1,\dots,|\tau|\}$ and a unique morphism $i\to j$ if and only if
there exists a directed path from $i$ to $j$ in $\tau$, with
composition induced by path concatenation.
\end{definition}

\begin{example}\hspace{0.5cm}$\tau$:
\begin{tikzcd}[row sep=0.9cm]
 \overset{1}{\bullet}  \arrow[r] & \overset{2}{\bullet}   & \arrow[l] \overset{3}{\bullet}  \arrow[r] & \overset{4}{\bullet}  \arrow[r] & \overset{5}{\bullet}  &  \arrow[l]\overset{6}{\bullet} \arrow[r] & \overset{7}{\bullet} 
\end{tikzcd}
\end{example}

\begin{definition}
Let $\tau$ be a zigzag type and $\mathcal{C}$ be a category. A \emph{$\mathcal{C}$-valued diagram over $\tau$} is a functor 
\[
\mathbb{D} : \mathcal{P}_\tau \to \mathcal{C}.
\]
A morphism $\mathbb{D}_1\to\mathbb{D}_2$ between $\mathcal{C}$-valued diagrams is a natural transformation $\psi: \mathbb{D}_1 \Rightarrow \mathbb{D}_2$.
\end{definition}

\begin{remark}
Let $\mathbb{D}:\mathcal{P}_\tau\to\mathcal{C}$ be a $\mathcal{C}$-valued diagram over $\tau$. For each edge $i\to j$ in $\tau$, i.e.\ with $j=i\pm 1$, we refer to the morphism
\[
\mathbb{D}[i\to j]:D_i\to D_j
\]
as a \emph{structure map} of $\mathbb{D}$.
\end{remark}

\begin{remark}\label{rmk:comp}
We denote diagrams by blackboard bold symbols such as $\mathbb{D}$, and write $D_i\coloneq \mathbb{D}(i)$ for the point-wise object at vertex $i$ of $\tau$. Moreover, given a functor $\psi : \mathcal{C} \to \mathcal{D}$ and a $\mathcal{C}$-valued diagram $\mathbb{X} : \mathcal{P}_\tau \to \mathcal{C}$, we obtain a $\mathcal{D}$-valued diagram by composition 
\[
\psi(\mathbb{X}) := \psi \circ \mathbb{X}.
\]
\end{remark}
Throughout this paper, we fix a field $k$. We will be particularly interested in the case where $\mathcal{C}=\cat{vect_k}$ (the category of finite-dimensional vector spaces over $k$).

\begin{definition}
Let $\tau$ be a zigzag type. A \emph{zigzag module} $\mathbb{V}$ of type $\tau$ is a $\cat{vect_k}$-valued diagram over $\tau$
\[ \mathbb{V}: \mathcal{P}_\tau \to \cat{vect_k},
\]
and we write 
\[ \cat{Zig}_\tau := [\mathcal{P}_\tau, \cat{vect_k}]
\]
for the category of zigzag modules of type $\tau$.
\end{definition}

\begin{example} \label{ex:zigmod}
A simple example of a zigzag module is given by
\begin{center}
\begin{tikzcd}[row sep=0.9cm]
\textbf{Zigzag type $\tau$}:\arrow[d, "\mathbb{V}", ] & \overset{1}{\bullet} \arrow[d, swap] \arrow[r] & \overset{2}{\bullet} \arrow[d , swap]  & \arrow[l] \overset{3}{\bullet} \arrow[d , swap] \arrow[r] & \overset{4}{\bullet} \arrow[d , swap] \arrow[r] & \overset{5}{\bullet}  \arrow[d , swap]\\
\textbf{Zigzag module:} & \field \arrow[r, "\mathrm{id}", swap] & \field  & \arrow[l, "(1 \ 0 )"] \field^2 \arrow[r, "(0 \ 1)", swap] & \field \arrow[r, "0", swap] & 0
\end{tikzcd}
\end{center}
\end{example}

\noindent Unpacking the definitions, a morphism between two zigzag modules $\V$ and $\mathbb{W}$ of the same type is a collection of maps $\psi_i : V_i \to W_i$ such that the following diagrams commute
\[
\begin{tikzcd}
V_i \arrow[leftrightarrow]{r} \arrow[d, "\psi_i", swap] & V_{i+1} \arrow[d, "\psi_{i+1}"] \\
W_i \arrow[leftrightarrow]{r} & W_{i+1}.
\end{tikzcd}
\]

\noindent We denote forward structure maps by $f_i:V_i\to V_{i+1}$ and backward structure maps by $g_i: V_i \leftarrow V_{i+1}$. For any subinterval $[i,j]\subseteq [1, n ]$, we define the \emph{truncated module} $\V[i,j]$ to be the module $\V$ restricted to those indices.

\begin{remark}
The category $\cat{Zig}_\tau$ is an \emph{abelian category}~\cite{gabrielUnzerlegbareDarstellungen1972}. In particular, standard constructions such as $\mathrm{Hom}$-spaces, kernels, cokernels, and direct sums are well-defined and defined pointwise. For instance, given two zigzag modules $\mathbb{U}$ and $\mathbb{W}$, their direct sum $\V = \mathbb{U} \oplus \mathbb{W}$ is defined by
\[
V_i = U_i \oplus W_i
\]
for each index $i$, with structure maps given by the direct sums of the corresponding structure maps of $\mathbb{U}$ and $\mathbb{W}$. 
\end{remark}

Analogously to persistence modules~\cite{edelsbrunnerTopologicalPersistenceSimplification2002}, zigzag modules possess a tractable combinatorial description relying on the fact that they can be broken down into simple, easily understood, atomic components. 

\begin{definition}
Let $\V$ be a zigzag module of type $\tau$. A \emph{sub-zigzag module} $\mathbb{U}\subseteq \V$ is a subfunctor of $\V$. Concretely, this is a collection of vector subspaces $U_i\subseteq V_i$ such that every structure map of $\V$ restricts to a linear map between these subspaces.
\end{definition}

With this notion of a submodule, we can discuss decomposing a zigzag module into simpler components.

\begin{definition}
A submodule $\mathbb{U}$ is called a \emph{summand} of $\V$ if there exists a complementary submodule $\mathbb{W}$ such that $\V=\mathbb{U}\oplus\mathbb{W}$. A non-zero zigzag module $\V$ is \emph{indecomposable} if it cannot be written as a direct sum of two non-zero submodules.
\end{definition}

\begin{remark}
In contrast to ordinary persistence, not every submodule is a summand: that is, a submodule need not admit a complementary submodule; see~\cite[Example 2.10.]{carlssonZigzagPersistence2010}. This is an important distinction one has to consider while breaking down a zigzag module into indecomposable summands.
\end{remark}

In the context of zigzag modules, the indecomposables take on a particularly pleasant form.

\begin{definition}
Let $\tau$ be a zigzag type, and let $1\leq b \leq d \leq |\tau|$ be two integers. The \emph{interval module of type $\tau$} with start time $b$ and end time $d$, written $\mathbb{I}_{\tau}\langle b,d\rangle$, is defined pointwise as 
$$ I_\tau\langle b,d\rangle_i = \begin{cases}
    \field & \text{ if $b\leq i \leq d $},\\
    0 & \text{otherwise}.
\end{cases}$$
We insert identity maps between adjacent copies of $\field$ and zero maps otherwise.

\vspace{0.3cm}
\centering
\begin{tikzcd}[
  execute at end picture={
\draw[thick, decoration={brace, mirror, raise=4pt}, decorate] 
(A.south west) -- (B.south east) 
node[pos=0.5, below=10pt] {$[1, b-1]$ };
\draw[thick, decoration={brace, mirror, raise=4pt}, decorate] 
(C.south west) -- (D.south east) 
node[pos=0.5, below=10pt] {$[b, d]$ };
\draw[thick, decoration={brace, mirror, raise=4pt}, decorate] 
(E.south west) -- (F.south east) 
node[pos=0.5, below=10pt] {$[d+1, |\tau|]$ };
}
]
|[alias=A]| 0
  \arrow[r, "0"] 
& \cdots
  \arrow[l]
  \arrow[r, "0"] 
& |[alias=B]| 0
  \arrow[l]
  \arrow[r, "0"] 
& |[alias=C]|\field
  \arrow[l]
  \arrow[r, "\mathrm{id}"] 
& \cdots
  \arrow[l]
  \arrow[r, "\mathrm{id}"] 
& |[alias=D]|\field
  \arrow[l]
  \arrow[r, "0"] 
& |[alias=E]|0
  \arrow[l]
  \arrow[r, "0"] 
& \cdots
  \arrow[l]
  \arrow[r, "0"] 
& |[alias=F]|0. \arrow[l]
\end{tikzcd}
\end{definition}

It is straightforward to verify that interval modules are indecomposable. Moreover, they exhaust all indecomposable zigzag modules. Indeed, viewed through the lens of quiver theory, zigzag modules (which include persistence modules by considering the zigzag type with all structure maps pointing forward) are incarnations of representations of $A_n$ graphs~\cite{oudotPersistenceTheoryQuiver2015}. Their decomposition theorem is the simplest case of Gabriel's Theorem \cite{gabrielUnzerlegbareDarstellungen1972}.

\begin{theorem}\label{thm:gabriel}
    Every zigzag module $\V$ of type $\tau$ decomposes into a direct sum of interval modules
    \[ 
    \V \cong \bigoplus_{j=1}^N  \mathbb{I}_\tau\langle b_j,d_j\rangle.
    \]
\end{theorem}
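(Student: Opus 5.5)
We argue by strong induction on the total dimension $\dim \V = \sum_i \dim V_i$, which is finite since all $V_i$ are finite-dimensional. If $\dim\V=0$ there is nothing to show. Otherwise it suffices to prove the following \emph{splitting claim}: every nonzero $\V$ has an interval module $\mathbb{I}_\tau\langle b,d\rangle$ as a direct summand, i.e.\ $\V\cong \mathbb{I}_\tau\langle b,d\rangle\oplus\mathbb{W}$ for some $\mathbb{W}$. Since $\dim\mathbb{W}<\dim\V$, the induction hypothesis applied to $\mathbb{W}$ then finishes the proof. (Alternatively, by Krull--Schmidt one only needs that every indecomposable is an interval module, but the splitting formulation avoids needing uniqueness.)

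To produce the summand, we induct on the length $|\tau|$ and peel off the last vertex. Write $n=|\tau|$ and let $\V' = \V[1,n-1]$ be the truncated module. By induction on length, $\V'\cong\bigoplus_j \mathbb{I}\langle b_j,d_j\rangle$ with respect to some basis of each $V_i$, $i<n$. We then treat the two cases according to $\tau_{n-1}$.

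\emph{Case $\tau_{n-1}={\to}$.} Here the map $f:V_{n-1}\to V_n$ is restricted to the span $B$ of basis vectors of intervals with $d_j=n-1$. We order these intervals by start time $b_j$, with later start times first, and perform a column reduction of $f|_B$ in that order. Adding a multiple of the generator of an interval with a later start to one with an earlier start is an automorphism of $\V'$, because the interval with the earlier start surjects compatibly along the overlapping range. The result is that the nonzero images become linearly independent and each kernel vector spans its own interval. The intervals with nonzero image are extended to end at $n$, those with zero image stay ending at $n-1$, and a complement of $\im f$ in $V_n$ contributes intervals $\langle n,n\rangle$.

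\emph{Case $\tau_{n-1}={\leftarrow}$.} Here $g:V_n\to V_{n-1}$ is handled dually. We choose a basis of $V_n$ adapted to $\ker g$, and then modify the basis of $V_{n-1}$ so that $\im g$ is spanned by generators of intervals ending at $n-1$, again using triangular changes of basis ordered by start time.

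We expect the main obstacle to be this compatibility check: verifying that the elementary basis changes used in the reduction lift to automorphisms of the whole truncated module $\V'$, and not merely of $V_{n-1}$. For forward arrows the ``elder rule'' ordering makes this work. In a zigzag, however, whether a change of basis between two intervals extends depends on how both endpoints interact with the arrow directions. Concretely, an extension to a morphism $\mathbb{I}\langle b,d\rangle\to\mathbb{I}\langle b',d'\rangle$ exists exactly when, at the left end, $b'\le b$ with the arrow at $b$ pointing inward or $b'\ge b$ with appropriate orientation, and symmetrically at the right end. The first thing we would try is to enumerate the four endpoint-orientation types of intervals meeting at $n-1$ and show that a suitable total order on them (a zigzag analogue of the elder rule) always makes the reduction matrix triangular with respect to admissible morphisms. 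If this bookkeeping becomes unwieldy, the fallback is the standard quiver-theoretic route: for $\V$ indecomposable, $\mathrm{End}(\V)$ is local by Fitting's lemma, and one shows that all structure maps are then injective or surjective of rank at most one, forcing an interval. This is precisely Gabriel's theorem for $A_n$~\cite{gabrielUnzerlegbareDarstellungen1972}, which we may ultimately cite.
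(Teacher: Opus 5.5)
The paper gives no proof of this statement. It cites Gabriel's theorem for $A_n$ and the constructive proof of Carlsson--de Silva, so your fallback of citing Gabriel is exactly the paper's route.

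\textbf{The gap.} The constructive argument you sketch has a step that fails as written. In the forward case you modify the generator of an earlier-starting interval by adding a multiple of a later-starting one. Your justification is that $\mathbb{I}\langle b,n-1\rangle$ maps onto $\mathbb{I}\langle b',n-1\rangle$ for $b<b'$. Over $\tau$ truncated at $n-1$, such a nonzero morphism exists only when $b'$ is an \emph{open} left endpoint (arrow $b'-1\leftarrow b'$). When $b'$ is closed, the only nonzero morphism goes the other way. So your move is never admissible in ordinary persistence, where the correct move is the elder rule.

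Concretely, take $\tau=1\to 2\to 3$ and $\V[1,2]=\mathbb{I}\langle 1,2\rangle\oplus\mathbb{I}\langle 2,2\rangle$, with level-$2$ generators $x,y$ and $f(x)=f(y)\neq 0$. Processing $y$ first, your reduction replaces $x$ by $x-y$, which is not in the image of $V_1\to V_2$. It then outputs $\{\langle 1,2\rangle,\langle 2,3\rangle\}$. The true barcode is $\{\langle 1,3\rangle,\langle 2,2\rangle\}$, since $V_1\to V_3$ has rank one. The backward case, which you again order by start time alone, has the same defect; try $1\to 2\leftarrow 3$ with $\im g$ spanned by $x+y$.

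\textbf{The missing ingredient.} What is needed is exactly the decoration-dependent order you anticipate at the end. For two intervals ending at $n-1$ with $b<b'$, exactly one elementary move is admissible, and the later start decides which. If $b'$ is closed, the later generator may absorb the earlier one. If $b'$ is open, the earlier generator may absorb the later one. In particular, no ordering by start time alone works in a genuine zigzag.

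These relations are realised by a single total order:
\begin{itemize}
\item open-left intervals, by decreasing start;
\item then closed-left intervals (counting $b=1$ as closed), by increasing start.
\end{itemize}
A generator may only be modified by multiples of generators preceding it in this order. The resulting basis change is the identity plus a nilpotent endomorphism of $\V[1,n-1]$, hence an automorphism. With this order, your column reduction in the forward case goes through, and so does an echelon form of $\im g$ with pivots at the last position in the backward case. The rest of your induction on $|\tau|$ is then fine, and the outer induction on $\dim\V$ is unnecessary.

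\textbf{The Fitting's lemma fallback.} It does not repair the gap by itself. Locality of $\mathrm{End}(\V)$ for indecomposable $\V$ is the easy part. Showing that an indecomposable has $\dim V_i\le 1$ everywhere is the entire content of the theorem, so that route amounts to citing Gabriel.
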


A constructive proof of this theorem, together with a decomposition algorithm, is presented in \cite{carlssonZigzagPersistence2010}. 

\begin{example}
The zigzag module
\begin{center}
\begin{tikzcd}
\V:\ \  \field \arrow[r, "\mathrm{id}"] & \field  & \arrow[l, "(1 \ 0 )", swap] \field^2 \arrow[r, "(0 \ 1)"] & \field \arrow[r, "0"] & 0
\end{tikzcd}
\end{center}
of Example~\htest{ex:zigmod} decomposes into $\V \cong \mathbb{I}\langle 1,3\rangle \oplus\mathbb{I}\langle 3,4\rangle$.
\end{example}

The following theorem shows that decompositions of zigzag modules into indecomposable summands are unique up to isomorphism. This is a special case of the Krull–Remak–Schmidt theorem. 

\begin{theorem}[{\cite[Prop. 2.2.]{carlssonZigzagPersistence2010}}]\label{thm:unique-decomp}
Suppose a zigzag module $\V$ of type $\tau$ admits two direct sum decompositions into submodules:
$$\V = \mathbb{U}_1 \oplus\dots\oplus \mathbb{U}_M \text{\ \ and \ \ } \V=\mathbb{W}_1 \oplus \dots \oplus \mathbb{W}_N.$$ Then $M=N$ and there is some permutation $\sigma$ of $\{ 1, \dots, N \}$ such that $\mathbb{U}_j \cong \mathbb{W}_{\sigma(j)}$ for all $j$.
\end{theorem}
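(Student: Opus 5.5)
The plan is to prove the statement as the Krull--Remak--Schmidt theorem in $\cat{Zig}_\tau$. The key input is that every indecomposable zigzag module has a \emph{local} endomorphism ring: its non-invertible endomorphisms form a two-sided ideal. Uniqueness then follows by the standard exchange argument.

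First, I would note that every $\V$ is finite-dimensional pointwise, so $\mathrm{End}(\V)$ is a finite-dimensional $\field$-algebra. Applying Fitting's lemma to any endomorphism $\phi$ of an indecomposable $\mathbb{U}$ gives the local property. Concretely, for $N$ large the images $\im \phi^N$ and kernels $\ker\phi^N$ stabilise pointwise, and these stabilisations can be taken uniformly over the finitely many indices. Since kernels and images are computed pointwise and are preserved by structure maps (as $\phi$ is natural), both are submodules, and $\V=\ker\phi^N\oplus\im\phi^N$ as zigzag modules. Indecomposability forces $\phi$ to be either nilpotent or an isomorphism. In a finite-dimensional algebra in which every element is nilpotent or a unit, the non-units form an ideal, so $\mathrm{End}(\mathbb{U})$ is local. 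One can bypass this for intervals: by Theorem~\ref{thm:gabriel} the indecomposables are isomorphic to interval modules, and $\mathrm{End}(\mathbb{I}_\tau\langle b,d\rangle)\cong\field$ directly, since a natural endomorphism is a scalar on each $\field$ and identity maps force all these scalars to agree. The general Fitting argument is needed anyway, because the statement concerns arbitrary decompositions, though one may assume each $\mathbb{U}_j,\mathbb{W}_j$ is indecomposable, as is implicit.

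Next, I would run induction on $M$. Let $\pi_j,\iota_j$ be the projections and inclusions for the $\mathbb{W}$-decomposition, and $p,q$ those for $\mathbb{U}_1$. Then
\[
\mathrm{id}_{\mathbb{U}_1}=\sum_j p\,\iota_j\pi_j\, q .
\]
Since $\mathrm{End}(\mathbb{U}_1)$ is local, some term $p\iota_j\pi_j q$ is an automorphism; call this index $\sigma(1)$. Then $\pi_{\sigma(1)}q:\mathbb{U}_1\to\mathbb{W}_{\sigma(1)}$ is split mono, and its image is a summand of the indecomposable $\mathbb{W}_{\sigma(1)}$, so it is an isomorphism. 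The standard exchange step then shows
\[
\V=\mathbb{W}_{\sigma(1)}\oplus\mathbb{U}_2\oplus\dots\oplus\mathbb{U}_M ,
\]
and hence
\[
\mathbb{U}_2\oplus\dots\oplus\mathbb{U}_M\cong \V/\mathbb{W}_{\sigma(1)}\cong\bigoplus_{j\neq\sigma(1)}\mathbb{W}_j .
\]
Induction on this isomorphism, transported to an internal decomposition, yields $M=N$ and the permutation.

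The main obstacle is making the Fitting decomposition rigorous in the functor category. I would handle it by checking naturality at each structure map, which is routine because $\phi$ commutes with the $f_i,g_i$.
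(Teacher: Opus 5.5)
Your proof is correct and is the standard Krull--Remak--Schmidt argument: Fitting's lemma gives local endomorphism rings, followed by the exchange step and induction. The paper gives no proof of its own and simply defers to this argument by citing Carlsson--de Silva; you are also right to supply the hypothesis that the summands are indecomposable, which the statement as written omits but needs. One small correction: your remark that Fitting's lemma is ``needed anyway'' is not accurate, since once the summands are indecomposable, Theorem~\ref{thm:gabriel} already makes each one isomorphic to an interval module with endomorphism ring $\field$, and that shortcut suffices on its own.
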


Theorem~\htest{thm:gabriel} together with Theorem~\htest{thm:unique-decomp} ensures that the decomposition of a zigzag module into interval summands is well-defined up to isomorphism. This allows us to distil the algebraic structure of a zigzag module into a simple combinatorial invariant.

\begin{definition}
Given a zigzag module $\mathbb{V}$ of type $\tau$ with interval decomposition $\mathbb{V} \cong \bigoplus_{j=1}^N \mathbb{I}_\tau\langle b_j, d_j \rangle$, the \emph{barcode} of $\mathbb{V}$ is the multiset\footnote{A multiset is a generalisation of a set where elements are allowed to appear more than once; the number of times an element appears is its multiplicity.} of pairs of start and endpoints, called \emph{bars},
\[
\mathrm{Bar}(\mathbb{V}) := \{ \langle b_j, d_j \rangle \mid j=1, \dots, N \}.
\]
Two barcodes are said to be equal if they coincide as multisets, i.e.\ if each pair $\langle b,d\rangle$ appears with the same multiplicity in both.
\end{definition}

\begin{remark}
Each bar $\langle b, d \rangle$ in the barcode represents the \emph{lifespan} of a \emph{feature} that persists from index $b$ to $d$ through the zigzag module.
\end{remark}

\subsection{Decorating the Endpoints}\label{sec:dec-barcode}

The barcode provides a complete invariant for a zigzag module $\V$ of type $\tau$. This means that two zigzag modules of type $\tau$ are isomorphic if and only if they have the same barcode. However, the algebraic mechanisms by which bars originate and terminate provide critical context for interpreting topological features, making them worth recording.

It is standard~\cite{MR3924175, Chazal2012TheSA, deyFastComputationZigzag2022} to decorate the endpoints of each bar $\langle b , d \rangle$ according to the directions of the structure maps at its boundary.

\begin{definition}
The left endpoint $b$ of a bar $\langle b , d \rangle\in\mathrm{Bar}(\V)$ is \emph{closed} if $V_{b-1}\to V_b$ points forward, and \emph{open} if it points backward $V_{b-1}\leftarrow V_b$. Dually, the right endpoint $d$ is \emph{closed} if $V_{d}\leftarrow V_{d-1}$ and \emph{open} if $V_{d}\to V_{d+1}$.
\end{definition}

\noindent Equivalently, closed ends arise as cokernel classes and open ends as kernel classes, yielding four interval types:

\vspace{0.5em}
\begin{center}
\begin{tabular}{c@{\hspace{0.2em}}c@{\hspace{0.2em}}c @{\hspace{0.75cm}} c@{\hspace{0.2em}}c@{\hspace{0.2em}}c}

$\mathrm{coker}$ &
\begin{tikzcd}[column sep=0.5cm, ampersand replacement=\&] 
0 \arrow[r, "f_{b-1}"] \& \field \arrow[r, leftrightarrow] \& \dots \arrow[r, leftrightarrow] \& \field \& 0 \arrow[l, "g_d"'] 
\end{tikzcd}
&
$\mathrm{coker}$,

&
$\mathrm{coker}$ &
\begin{tikzcd}[column sep=0.5cm, ampersand replacement=\&] 
0 \arrow[r, "f_{b-1}"] \& \field \arrow[r, leftrightarrow] \& \dots \arrow[r, leftrightarrow] \& \field \arrow[r, "f_d"] \& 0
\end{tikzcd}
&
$\mathrm{ker}$,
\\[0.3em]
& $[b,d]$ & & & $[b,d)$

\\[1em]

$\mathrm{ker}$ &
\begin{tikzcd}[column sep=0.5cm, ampersand replacement=\&] 
0 \& \field \arrow[l, "g_{b-1}"'] \arrow[r, leftrightarrow] \& \dots \arrow[r, leftrightarrow] \& \field \& 0 \arrow[l, "g_d"'] 
\end{tikzcd}
&
$\mathrm{coker}$,

&
$\mathrm{ker}$ &
\begin{tikzcd}[column sep=0.5cm, ampersand replacement=\&] 
0 \& \field \arrow[l, "g_{b-1}"'] \arrow[r, leftrightarrow] \& \dots \arrow[r, leftrightarrow] \& \field \arrow[r, "f_d"] \& 0 
\end{tikzcd}
&
$\mathrm{ker}$. 
\\[0.3em]
& $(b,d]$ & & & $(b,d)$

\end{tabular}
\end{center}

\vspace{0.5em}

\noindent In this way, the barcode $\mathrm{Bar}(\V)$ splits into the four types: \textit{closed-closed}, \textit{closed-open}, \textit{open-closed}, and \textit{open-open}.

When the decoration is not important, or when a bar has an endpoint at the initial or terminal index of the zigzag module, we instead use the undecorated notation $\langle b$, and $d \rangle$. It is worth reiterating that these decorations are merely a bookkeeping tool. Since the zigzag type $\tau$ is fixed for a given module, the undecorated barcode remains a complete invariant; the positions of the bars relative to the orientations in $\tau$ entirely determine the decorations.

\subsection{From Frames to Zigzag Modules}

Consider a binary video of resolution $(w,h)$, i.e.\ a finite sequence of binary images $(I_i)_{i=1}^n$, equipped with a common foreground connectivity $\kappa\in\{4,8\}$. Pixel changes between consecutive images $I_i$ and $I_{i+1}$, for $i=1,\dots,n-1$, induce additions and deletions of cells in the associated cubical complexes underlying the coloured regions $F_{I_i}$ and $B_{I_i}$.

The flexibility of zigzag persistence provides a natural framework to model the evolving topology under these non-monotonic changes. We first detail the standard constructions used to relate subspaces of a common topological space. 

The category of topological spaces is denoted by $\cat{Top}$. As all spaces we consider arise from finite binary images, their homology groups are finitely generated. Accordingly, we restrict our attention to the full subcategory $\cat{top}$ of topological spaces whose homology groups are finitely generated in every degree.

\begin{definition}
Let $\tau$ be a zigzag type. A \emph{topological diagram over $\tau$} is a $\cat{top}$-valued diagram
\[
\mathbb{X} : \mathcal{P}_\tau \to \cat{top}.
\]
\end{definition}

\begin{remark}
Concretely, this consists of a sequence of topological spaces with continuous maps $$X_1 \leftrightarrow X_2\leftrightarrow\dots \leftrightarrow X_n,$$ where the directions of the arrows are prescribed by $\tau$. Such diagrams generalise the notion of filtrations from regular persistence to zigzag persistence.
\end{remark}

The following constructions form the basic
building blocks for most common constructions in zigzag persistence.

\begin{definition}\label{def:constr}

Let $(X_1, \dots, X_n)$ be a sequence of subspaces of a common ambient topological space $X$. We define the associated \emph{union construction} $\mathbb{X}^{\expspacing \cup}$ and \emph{intersection construction} $\mathbb{X}^{\expspacing \cap}$ to be the topological diagrams:
\begin{center}
\begin{tabular}{c l}
$\mathbb{X}^{\expspacing \cup}$: &
\begin{tikzcd}[column sep=11]
X_1 \arrow[r, hook] & X_1 \cup X_2 & X_2 \arrow[l, hook']\arrow[r, hook] & X_2 \cup X_3 & X_3 \arrow[l, hook'] \arrow[r, hook] & \dots
\end{tikzcd}\\[0.5em]

$\mathbb{X}^{\expspacing \cap}$: &
\begin{tikzcd}[column sep=11]
X_1 & X_1 \cap X_2 \arrow[l, hook'] \arrow[r, hook] & X_2 & X_2 \cap X_3 \arrow[l, hook'] \arrow[r, hook] & X_3 & \dots. \arrow[l, hook']
\end{tikzcd}
\end{tabular}
\end{center}
\noindent where the maps between consecutive spaces are the natural inclusion maps.
\end{definition}

At the level of binary videos, these constructions are realised by inserting suitable interpolation images~\cite{mcdonaldZigzagPersistenceCoral2023}, defined via pointwise logic operations.

\begin{definition}
Let $I_1,I_2:\Omega_{w,h}\to \{0,1\}$ be binary images of the same resolution, equipped with a common foreground connectivity $\kappa \in \{4,8\}$. We define the pointwise operations
\[
(I_1 \vee I_2)(x)\coloneq \max\{I_1(x),I_2(x)\}, \quad \text{and} \quad (I_1 \wedge I_2)(x)\coloneq \min\{I_1(x),I_2(x)\}.
\]
The resulting binary images are equipped with the same foreground connectivity $\kappa$ as $I_1$ and $I_2$.
\end{definition}

\begin{remark}
A binary video is equipped with a fixed foreground connectivity $\kappa$, and complementary background connectivity $\bar{\kappa}$ for all its images. Accordingly, all foreground regions $F_{I_i}$ (respectively background regions $B_{I_i}$) arise as subcomplexes of the same cubical complex, either $K$ or $K^*$, depending on $\kappa$. Consequently, unions and intersections of consecutive coloured regions are well-defined.
\end{remark}

\begin{lemma}\label{lem:inter-uni}
Let $I_1, I_2:\Omega_{w,h}\to\{0,1\}$ be binary images equipped with a common foreground connectivity $\kappa$. The operations $\vee$ and $\wedge$ correspond to unions and intersections of foreground regions and background regions in the following sense:
\begin{alignat*}{2}
F_{I_1}\cup F_{I_2} \;&=\; F_{I_1\vee I_2}, \qquad & F_{I_1}\cap F_{I_2} \;&=\; F_{I_1\wedge I_2},\\
B_{I_1}\cup B_{I_2} \;&=\; B_{I_1\wedge I_2}, \qquad & B_{I_1}\cap B_{I_2} \;&=\; B_{I_1\vee I_2}.
\end{alignat*}
\end{lemma}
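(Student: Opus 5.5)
The plan is to reduce every equality to an identity between pixel sets, and then transport that identity through the two constructions $S\mapsto T(S)$ and $S\mapsto V(S)$. Here I write $T(S)\coloneqq\bigcup_{p\in S}\mathrm{cl}(Q_p)\subseteq K$ and $V(S)\coloneqq\{\sigma\in K^*\mid \sigma^{(0)}\subseteq S\}$ for a pixel set $S\subseteq\Omega_{w,h}$. By Definition~\ref{def:top-real}, every region $F_I$ or $B_I$ has the form $|T(S)|$ or $|V(S)|$, with $S=I^{-1}(1)$ or $S=I^{-1}(0)$. The first step is purely set-theoretic. From $(I_1\vee I_2)(x)=\max\{I_1(x),I_2(x)\}$ one reads off
\[
(I_1\vee I_2)^{-1}(1)=I_1^{-1}(1)\cup I_2^{-1}(1),\qquad (I_1\vee I_2)^{-1}(0)=I_1^{-1}(0)\cap I_2^{-1}(0),
\]
and dually $(I_1\wedge I_2)^{-1}(1)=I_1^{-1}(1)\cap I_2^{-1}(1)$ and $(I_1\wedge I_2)^{-1}(0)=I_1^{-1}(0)\cup I_2^{-1}(0)$. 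This is the De Morgan swap that explains why $\vee$ and $\wedge$ trade roles between the $F$-row and the $B$-row of the lemma.

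The second step is to push these identities through the constructions, splitting into the cases $\kappa=4$ and $\kappa=8$. Only two identities are needed: "construction of a union equals union of constructions" and "construction of an intersection equals intersection of constructions". For $T$ and unions, closure commutes with unions of cells, so $T(S_1\cup S_2)=T(S_1)\cup T(S_2)$ is immediate. For $V$ and intersections, a cell $\sigma$ has all its vertices in $S_1\cap S_2$ if and only if it has all its vertices in $S_1$ and all its vertices in $S_2$. Hence $V(S_1\cap S_2)=V(S_1)\cap V(S_2)$ is immediate. Passing to geometric realisations preserves both identities. Realisation commutes with unions of cells by definition. For intersections, I would check that two cubical subcomplexes of the same complex $K$ or $K^*$ intersect geometrically exactly in the realisation of their common cells, using that open cells are disjoint.

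The remaining two cases are the expected obstacle: $T$ with intersections, and $V$ with unions. Here the naive cellwise argument is not literally true. Two squares $Q_p$ and $Q_q$ with $p\in S_1\setminus S_2$ and $q\in S_2\setminus S_1$ may share an edge or a vertex. Likewise, the edge joining $p\in S_1\setminus S_2$ to an adjacent $q\in S_2\setminus S_1$ lies in $V(S_1\cup S_2)$ but in neither $V(S_i)$. So in these cases I will read $\cup$ and $\cap$ of regions, as the paper implicitly does in forming $\mathbb{X}^{\cup}$ and $\mathbb{X}^{\cap}$, as the join and meet in the lattice of regions of the given type. Concretely, the union of two $V$-regions is the full subcomplex of $K^*$ spanned by the union of their vertex sets. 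The intersection of two $T$-regions is the $T$-region generated by the common top cells, equivalently by the common pixels. Under this convention $S\mapsto V(S)$ and $S\mapsto T(S)$ are lattice isomorphisms onto their images. The two missing equalities then follow from the pixel identities of the first step.

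I will also record the honest inclusions $F_{I_1}\cup F_{I_2}\subseteq F_{I_1\vee I_2}$ (in the $V$ case) and $F_{I_1\wedge I_2}\subseteq F_{I_1}\cap F_{I_2}$ (in the $T$ case), together with the corresponding statements for $B$. These guarantee that the inclusion maps used in the union and intersection constructions of Definition~\ref{def:constr} are still well defined.
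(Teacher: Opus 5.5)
Your pixel-level De Morgan identities and the two cellwise cases are correct: $T(S_1\cup S_2)=T(S_1)\cup T(S_2)$, $V(S_1\cap S_2)=V(S_1)\cap V(S_2)$, and $|A|\cap|B|=|A\cap B|$ for subcomplexes of a common complex. This is all the paper's proof actually does. It writes out only the case $\kappa=8$, $F_{I_1}\cup F_{I_2}=F_{I_1\vee I_2}$, and declares the remaining identities ``analogous''. You are also right that two of them are not analogous, and are in fact false. Take $\kappa=8$ with $I_1^{-1}(1)=\{p\}$, $I_2^{-1}(1)=\{q\}$ and $p,q$ 4-adjacent. Then $F_{I_1}\cap F_{I_2}$ is the common edge of $Q_p$ and $Q_q$, while $F_{I_1\wedge I_2}=\emptyset$. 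For $\kappa=4$ the same data give two points for $F_{I_1}\cup F_{I_2}$ but a segment for $F_{I_1\vee I_2}$, and the background cases fail symmetrically. So for each $\kappa$ only two of the four equalities hold in general: $F\cup$ and $B\cap$ when $\kappa=8$, and $F\cap$ and $B\cup$ when $\kappa=4$. In the other two cases only your recorded inclusions survive.

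The gap is in how you then handle the two false identities. Redefining $\cup$ of $V$-regions as $V(S_1\cup S_2)$ and $\cap$ of $T$-regions as $T(S_1\cap S_2)$ makes them true by fiat. Nothing about the spaces $F_I,B_I$ is thereby proved, and this is not what the paper means. Definition~\ref{def:constr} uses literal unions and intersections of subspaces. The lemma exists to identify $\mathbb F^\cup_{\mathcal V}$ and $\mathbb F^\cap_{\mathcal V}$ of Definition~\ref{def:top-constr} with such constructions. That identification is what makes the chain-level identities $C_\bullet(A\cup B)=C_\bullet(A)+C_\bullet(B)$ and $C_\bullet(A\cap B)=C_\bullet(A)\cap C_\bullet(B)$ hold, and hence Mayer--Vietoris and the Strong Diamond Principle apply in Corollary~\ref{cor:matchings}.

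Your lattice join and meet do not satisfy these identities. In the $\kappa=8$ example, the diamond $\emptyset\subseteq F_{I_1},F_{I_2}\subseteq F_{I_1\vee I_2}$ gives $0\to k^2\to k\to 0$ in degree $0$, which is not exact. Indeed, for this two-frame video $H_*(\mathbb F^\cup_{\mathcal V})$ has one bar while $H_*(\mathbb F^\cap_{\mathcal V})$ has two. Likewise, your inclusions make the diagrams of Definition~\ref{def:top-constr} well defined, but they do not make them union or intersection constructions in the sense of Definition~\ref{def:constr}. The correct outcome of your analysis is a corrected statement: the two valid equalities for each $\kappa$, plus inclusions in the other cases. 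It is not a proof of the lemma as written, which is false.
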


\begin{proof}
We prove $F_{I_1}\cup F_{I_2} = F_{I_1\vee I_2}$ in the case $\kappa=8$, where $F_I = |T_I(1)|$. Then
\[
F_{I_1}\cup F_{I_2} = |T_{I_1}(1)| \cup |T_{I_2}(1)|
= |T_{I_1}(1)\cup T_{I_2}(1)|,
\]
since both $T_{I_1}(1)$ and $T_{I_2}(1)$ are subcomplexes of the same cubical complex $K$. Substituting in the definitions and combining the unions, we obtain
\[
T_{I_1}(1)\cup T_{I_2}(1)
= \bigcup_{(i,j)\in I_1^{-1}(1)\cup I_2^{-1}(1)} \mathrm{cl}(Q_{i,j}).
\]
Using $I_1^{-1}(1)\cup I_2^{-1}(1) = (I_1\vee I_2)^{-1}(1)$, the claim follows. The remaining identities are analogous; the background cases additionally use the complementarity of the pixel sets: $I^{-1}(0)=\Omega_{w,h}\setminus I^{-1}(1)$.
\end{proof}


\begin{example}
    
Let $I_1,I_2:\Omega_{w,h}\to \{0,1\}$ be binary images equipped with a common foreground connectivity. The interpolation images $I_1 \vee I_2$ and $I_1 \wedge I_2$ induce the following diamond of inclusions at the level of foreground regions, as illustrated in Figure~\ref{fig:foreground-incl}.

\begin{figure}[htbp]
\centering
\begin{tikzpicture}[every node/.style={inner sep=0pt, anchor=center}]
    \coordinate (top)    at (0,3);
    \coordinate (left)   at (-3,0);
    \coordinate (right)  at (3,0);
    \coordinate (bottom) at (0,-3);

    \node (A) at (top)    {\includegraphics[width=2cm]{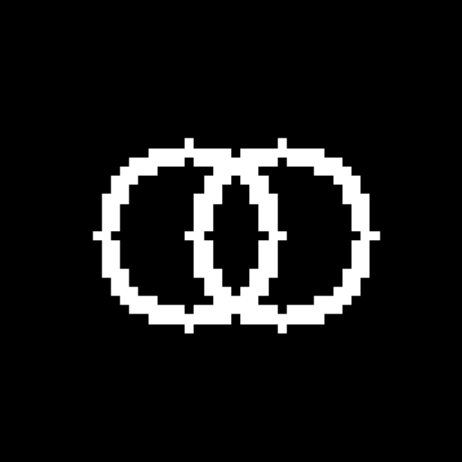}};
    \node (B) at (left)   {\includegraphics[width=2cm]{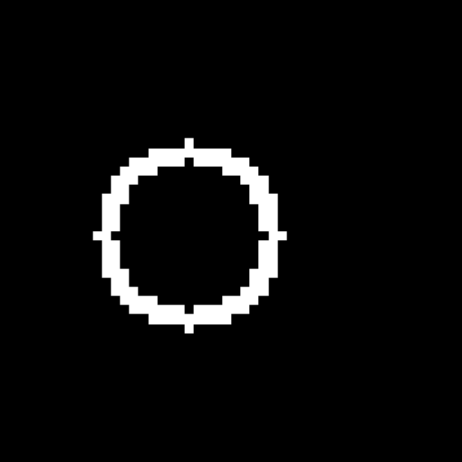}};
    \node (C) at (right)  {\includegraphics[width=2cm]{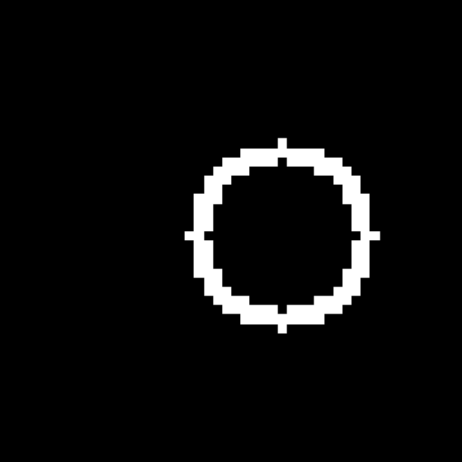}};
    \node (D) at (bottom) {\includegraphics[width=2cm]{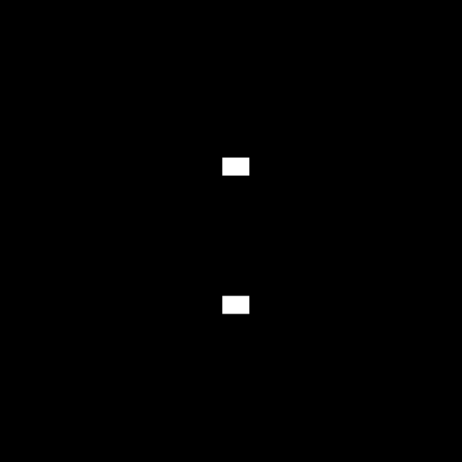}};

    \node at (top) [yshift=-1.4cm] {$I_1\vee I_2$};
    \node at (left) [yshift=-1.4cm] {$I_1$};
    \node at (right) [yshift=-1.4cm] {$I_2$};
    \node at (bottom) [yshift=-1.4cm] {$I_1\wedge I_2$};

    \draw[-Stealth, shorten >=1.5mm, shorten <=1.5mm] (C) -- (A);
    \draw[-Stealth, shorten >=1.5mm, shorten <=1.5mm] (B) -- (A);
    \draw[-Stealth, shorten >=1.5mm, shorten <=1.5mm] (D) -- (B);
    \draw[-Stealth, shorten >=1.5mm, shorten <=1.5mm] (D) -- (C);
\end{tikzpicture}
\caption{\textbf{Pointwise logic operations for binary images}. The arrows represent inclusions of the foregrounds.}
\label{fig:diamond_of_inclusions}\label{fig:foreground-incl}
\end{figure}
\end{example}



Lemma~\htest{lem:inter-uni} shows that unions and intersections of consecutive coloured regions are realised via interpolation frames. This leads us to the following definition.

\begin{definition}\label{def:top-constr}
Let $\mathcal{V}=(I_i)_{i=1}^n$ be a binary video. The insertion of interpolation frames of the form $I_i\vee I_{i+1}$ defines the topological diagrams
\begin{center}
\begin{tikzcd}[row sep=2pt]
\mathbb{F}^\cup_\mathcal{V}: & F_{I_1} \arrow[r, hook] & F_{I_1\vee I_2} & \arrow[l, hook'] F_{I_2} \arrow[r,hook] & \dots \arrow[r,hook]&  F_{I_{n-1}\vee I_n}  &
\arrow[l, hook'] F_{I_n}\\
\mathbb{B}^\cap_\mathcal{V}: & B_{I_1}  & \arrow[l,hook'] B_{I_1\vee I_2} \arrow[r, hook] &  B_{I_2}  & \arrow[l, hook'] \dots & \arrow[l,hook']  B_{I_{n-1}\vee I_n} \arrow[r, hook] &
B_{I_n}
\end{tikzcd}
\end{center}
 while the insertion of interpolation frames $I_i\wedge I_{i+1}$  defines
\begin{center}
\begin{tikzcd}[row sep=2pt]
\mathbb{F}^\cap_\mathcal{V}: & F_{I_1}  & \arrow[l,hook'] F_{I_1\wedge I_2} \arrow[r, hook] &  F_{I_2}  & \arrow[l, hook'] \dots & \arrow[l,hook']  F_{I_{n-1}\wedge I_n} \arrow[r, hook] &
F_{I_n}\\
\mathbb{B}^\cup_\mathcal{V}: & B_{I_1} \arrow[r, hook] & B_{I_1\wedge I_2} & \arrow[l, hook'] B_{I_2} \arrow[r,hook] & \dots \arrow[r,hook]&  B_{I_{n-1}\wedge I_n}  &
\arrow[l, hook'] B_{I_n}.
\end{tikzcd}
\end{center}
\end{definition}

\begin{example}
An example of the union construction $\mathbb{F}^{\cup}_\mathcal{V}$ using 8-connectivity on the foreground can be seen in Figure~\htest{fig:formi-first}.
\end{example}

Through Remark~\htest{rmk:comp}, topological diagrams are our main source of zigzag modules.

\begin{definition}\label{def:zig-asso}
Let $\mathbb{X} : \mathcal{P}_\tau \to \cat{top}$ be a topological diagram and let $H_\bullet(-,k): \cat{top} \to \cat{vect_k}$ be the singular homology functor of an arbitrary degree. The \emph{$H_\bullet$-zigzag module} associated to $\mathbb{X}$ is the zigzag module
\[
H_\bullet(\mathbb{X};k) := H_\bullet(-;k) \circ \mathbb{X}:\mathcal P_\tau \to \cat{vect_k}.
\]
We write 
$\mathrm{Bar}(H_*(\mathbb X)) \coloneq \bigsqcup_{l\geq 0} \mathrm{Bar}(H_l(\mathbb X)), $ for the disjoint union of the barcodes over all homological degrees.
\end{definition}

\begin{remark}
In Section~\ref{sec:dualities}, we investigate relations between $\mathrm{Bar}(H_*(\mathbb{F}^\cup_\mathcal{V}))$ and $\mathrm{Bar}(H_*(\mathbb{B}^\cap_\mathcal{V}))$, as well as between $\mathrm{Bar}(H_*(\mathbb{X}^\cap))$ and $\mathrm{Bar}(H_*(\mathbb{X}^\cup))$ for a general topological diagram $\mathbb{X}$.

\end{remark}

The zigzag modules of Definition~\htest{def:zig-asso} track the persistence of features, such as connected components and non-contractible cycles, across evolving spaces. The next subsection addresses the cost of computing the interval decomposition of such zigzag modules using cubical complexes.

\subsection{The Curse of Resolution}\label{sec:curse-of-res}

Knowing how to relate consecutive frames via insertion of suitable intermediate frames, we now confront the fundamental obstacle that motivates the rest of this paper: computing the barcode using cubical complexes is computationally infeasible for videos of even moderate resolution. Standard algorithms for zigzag persistence~\cite{carlssonZigzagPersistence2010,Carlsson2009ZigzagPH} compute an interval decomposition by maintaining a reduced boundary matrix $D$ associated with a sequence of cell insertions and deletions. The computational cost is governed by two factors: the size of the underlying complex and the total number of updates required. 

The first difficulty is the size of the cubical complex itself. For a binary image of resolution $w\times h$, with a total of $p$ pixels, the complex $K$ underlying the $T$-construction has $(w+1)(h+1)$ vertices, $2wh + w + h$ edges, and $wh$ faces --- approximately $4wh$ cells in total. For a $1000 \times 1000$ image, this already amounts to around four million cells, resulting in a boundary matrix with $\mathcal{O}(p^2)$ entries. Moreover, a binary video with $n$ frames then gives rise to a zigzag sequence of $2n-1$ spaces, each realised as a subcomplex of $K$.

The second difficulty is the number of cell updates. Consider the scenario illustrated in Figure~\ref{fig:three-figures}: a circular foreground region of radius $r=200$ in a $1000\times1000$ video that moves slightly between two consecutive frames $F_1$ and $F_2$. Due to the high resolution, even a modest displacement of a few pixels shifts the boundary of the circle, so that approximately $2 \times 10^4$ pixels are gained going from $F_1$ to $F_1\cup F_2$ and $2 \times 10^4$ pixels are lost moving from $F_1\cup F_2$ to $F_2$. This amounts to a total of $|F_1 \triangle F_2|\approx 4\times10^4$ cell insertions and deletions that must be processed to pass from $F_1$ to $F_2$ through $F_1 \cup F_2$, each triggering a column reduction on the boundary matrix of dimension $\mathcal{O}(p)\times\mathcal{O}(p)$, and costing $\mathcal{O}(p^3)$ in the worst case. For $p\approx 10^6$ and $|F_1\Delta F_2| \approx 4 \cdot 10^4$, this is an astronomically high cost, rendering naive cubical zigzag persistence impractical for any realistic resolution.

\begin{figure}[htbp]
  \centering
  \begin{tikzpicture}[
    image/.style={},
    arrow/.style={->, >=stealth, thick},
    swatch/.style={draw=none, fill=#1, rectangle, minimum size=2.5mm, inner sep=0pt},
  ]
  \definecolor{localCyan}{RGB}{0, 200, 200}
\definecolor{mygray}{RGB}{128, 86, 119}

    \node[image] (img1) at (0,0)
      {\includegraphics[width=0.22\textwidth]{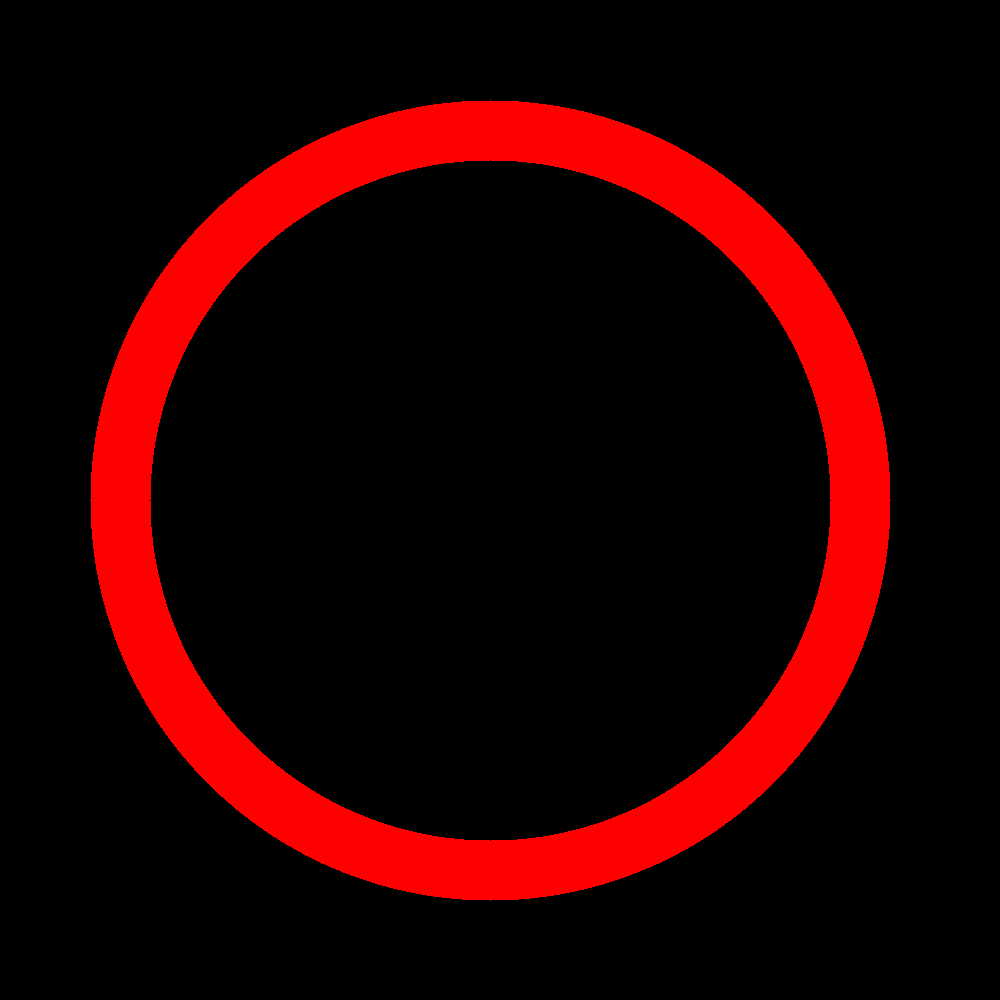}};

    \node[image] (img2) at (5.3,0)
      {\includegraphics[width=0.22\textwidth]{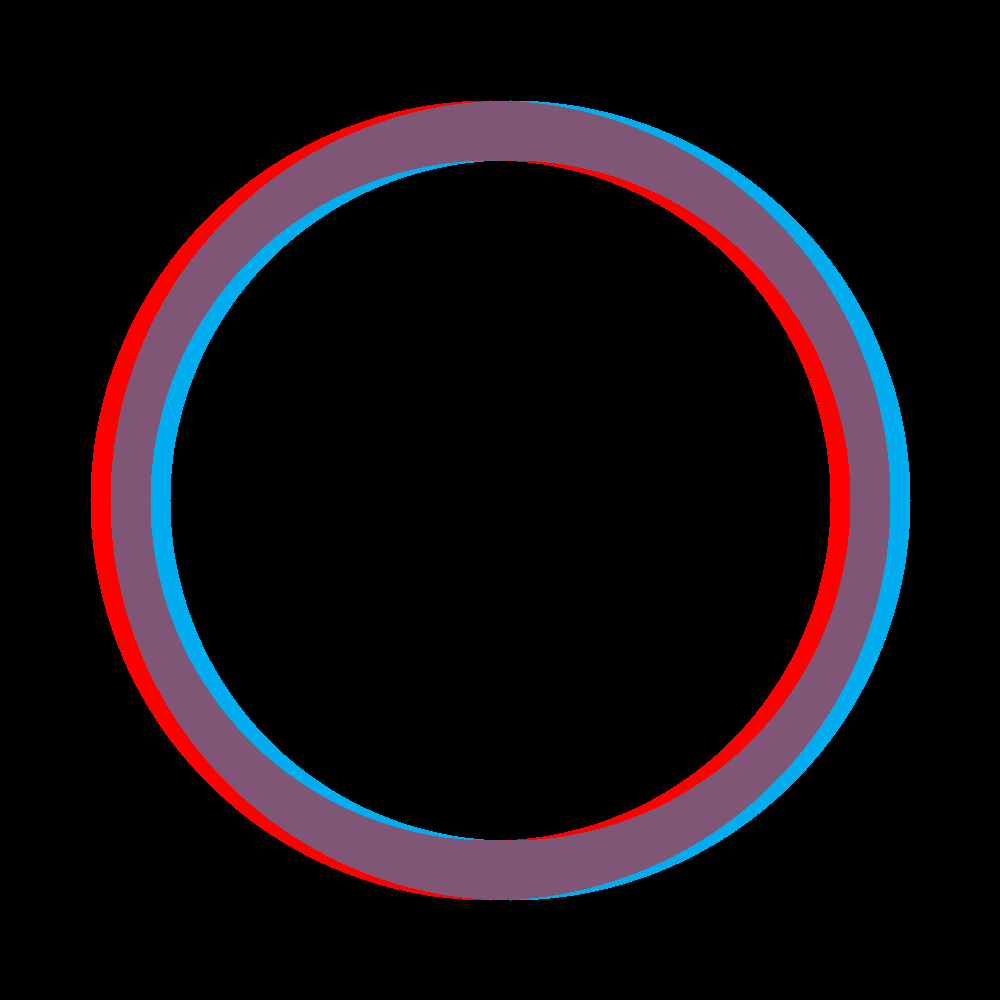}};

    \node[image] (img3) at (10.6,0)
      {\includegraphics[width=0.22\textwidth]{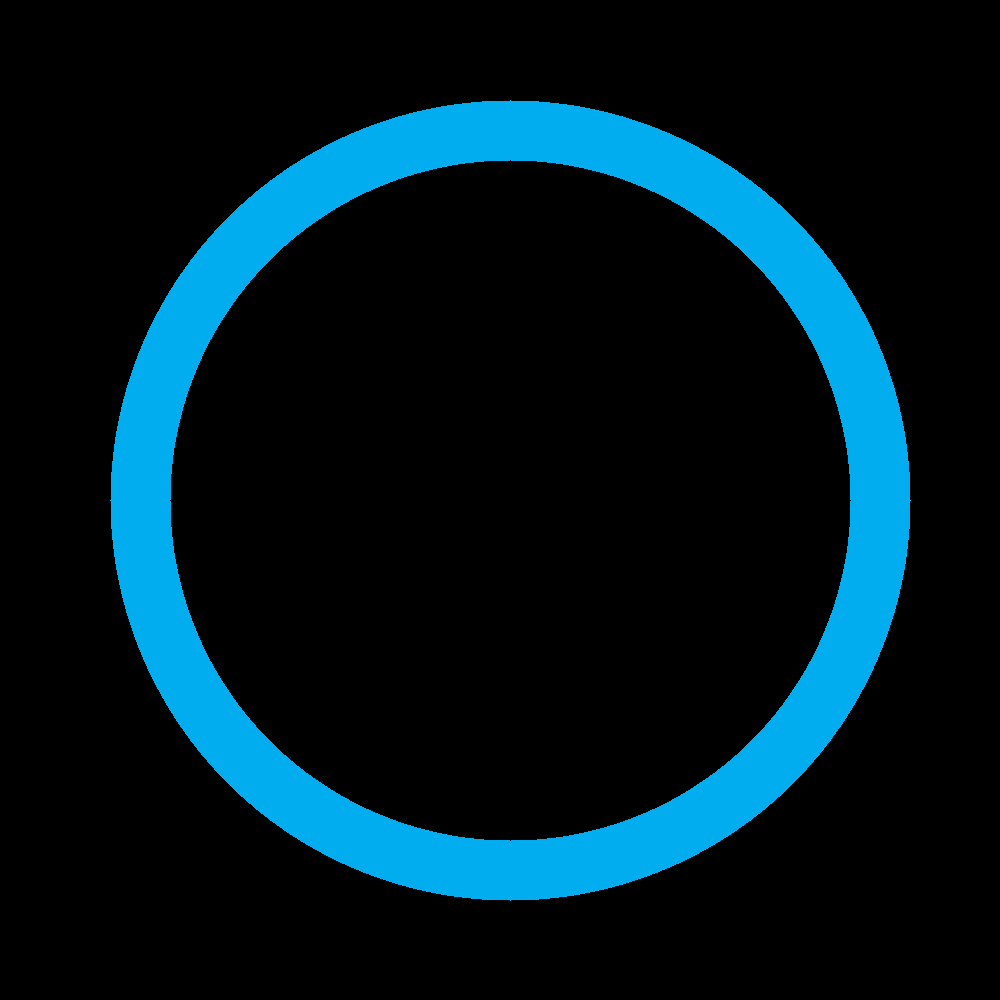}};


    \draw[arrow] (img1) -- (img2) node[midway, below] { $+$\textcolor{cyan}{$F_2 \setminus F_1$}};

    \draw[arrow] (img3) -- (img2) node[midway, below] { $-$\textcolor{red}{$F_1 \setminus F_2$}};

    \node at (img1.north) [above=0.05cm] {$F_1$};
    \node at (img2.north) [above=0.05cm] {$F_1\cup F_2$};
    \node at (img3.north) [above=0.05cm] {$F_2$};


    \node at (img2.south) [below=0.2cm, draw=gray!30, fill=gray!5, rounded corners=2pt, inner sep=1pt] {
    \setlength{\tabcolsep}{2pt}
      \begin{tabular}[colsep=10pt]{ll}
        \tikz\node[swatch=cyan]{};  & \footnotesize \textcolor{cyan}{$F_2 \setminus F_1$ (Added)} \\
        \tikz\node[swatch=mygray]{};  & \footnotesize \textcolor{mygray}{$F_1 \cap F_2$ (Static)} \\
        \tikz\node[swatch=red]{};   & \footnotesize \textcolor{red}{$F_1 \setminus F_2$ (Dropped)} 
      \end{tabular}
    };

     \node at (img1.south) [below=0.2cm, draw=gray!30, fill=gray!5, rounded corners=2pt, inner sep=1pt] {
    \setlength{\tabcolsep}{2pt}
      \begin{tabular}[colsep=10pt]{ll}
        \tikz\node[swatch=red]{};   & \footnotesize \textcolor{red}{$F_1$} 
      \end{tabular}
    };

   \node at (img3.south) [below=0.2cm, draw=gray!30, fill=gray!5, rounded corners=2pt, inner sep=1pt] {
    \setlength{\tabcolsep}{2pt}
      \begin{tabular}[colsep=10pt]{ll}
        \tikz\node[swatch=cyan]{};   & \footnotesize \textcolor{cyan}{$F_2$} 
      \end{tabular}
    };

  \end{tikzpicture}
  \caption{\textbf{The curse of resolution}: Transition between two frames $F_1$ and $F_2$ of resolution $1000\times 1000$ via their union. Pixels in $F_2\setminus F_1$ (cyan) are inserted going from $F_1$ to $F_1\cup F_2$, while pixels in $F_1\setminus F_2$ (red) are deleted going from $F_1\cup F_2$ to $F_2$. Due to the high resolution, even a small displacement leads to a large symmetric difference; in this example, a total of $|F_1\Delta F_2|\approx 4\cdot10^4$ pixels change, each inducing cell insertions and deletions in the cubical complex.}
  \label{fig:three-figures}
\end{figure}

One might hope that the regular structure of cubical complexes mitigates this cost. Since each edge meets exactly 2 vertices and each face meets exactly 4 edges, the boundary matrix $D$ is highly sparse, and its columns can be computed on-the-fly by enumerating faces rather than being stored explicitly. For ordinary persistent homology, the regular structure of cubical complexes can be exploited very effectively by modern software such as Cubical Ripser~\cite{Kaji2020CubicalRS}, which inherits many of the ideas underlying Ripser~\cite{Bauer2021Ripser}.

However, these techniques rely on the fact that for standard persistence, cells are inserted once and never removed (monotonicity of the filtration). In the zigzag setting, this cannot be guaranteed, as cells are both inserted and deleted as frames evolve, and these monotonicity-based optimisations do not apply; it is unclear whether analogous cubical-specific optimisations could be recovered. While similar cubical-specific optimisations could in principle be incorporated into zigzag persistence algorithms, existing implementations for binary videos currently do not exploit them. As we demonstrate in Section~\ref{sec:benchmarking}, current implementations are consequently limited to resolutions on the order of 200×200 pixels.

The rest of this paper is dedicated to sidestepping this obstacle entirely by observing that, for binary images, the relevant topological information can be extracted from the 0-skeleton, without ever fully realising the cubical complexes.

\section{Formigrams: the 0-Skeleton of Zigzag Persistence}\label{sec:formigrams-chapter}

In this section, we restrict our attention to $H_0$-zigzag modules arising from topological diagrams and establish the combinatorial nature of the functor $H_0(-, k ):\cat{top}\to\cat{vect_k}$ by directly encoding the topological information contained in $H_0$-zigzag modules into graph structures that can be computed efficiently (c.f. Section~\ref{sec:frames-to-formi}), thereby avoiding the construction of cubical complexes altogether.

We adopt a perspective similar to that of~\cite{kimGeneralizedPersistenceDiagrams2021}. The functor $H_0$ factors through the category $\cat{set}$~\cite{bauerAdditiveImage0th2025} of finite sets:
\[ 
H_0(-; k )= k[-] \circ \pi_0,
\]
where $\pi_0:\cat{top}\to \cat{set}$ is the path-component functor, assigning to each topological space its set of path-connected components, and to each continuous map $f:X\to Y$ the induced map
\[
\pi_0(f):\pi_0(X)\to\pi_0(Y), \quad [x]\mapsto [f(x)].
\]
Moreover, $k[-]:\cat{set}\to \cat{vect_k}$ denotes the free vector space functor over a fixed field $\field$. Thus, for any topological diagram $\mathbb{X}$ over $\tau$, we have $H_0(\mathbb{X})= k[\pi_0(\mathbb{X})]$, i.e.\ $H_0(\mathbb{X})$ is the linearisation of $\pi_0(\mathbb{X})$. If $i\to j$ is an edge in $\tau$, let $p_i\coloneq \mathbb{X}[i\to j] : X_i \to X_j$ denote the continuous map between adjacent spaces in $\mathbb{X}$. Then
\[
\pi_0(p_i):\pi_0(X_i)\to \pi_0(X_{j})
\]
sends a connected component of $X_i$ to a single connected component of $X_j$, reflecting that connected components cannot split under continuous maps. 

In~\cite{deyComputingZigzagPersistence2021}, Dey and Hou exploit that the combinatorial nature of $\pi_0(\mathbb{X})$ carries over to its linearisation $H_0(\mathbb{X};k)$ to provide a graph-based algorithm that computes the interval decomposition of a certain generic\footnote{c.f. Def~\ref{def:elem-zigzag-mod}.} class of $H_0$-zigzag modules. In this way, costly Gaussian elimination can be avoided entirely. We recall their method in Section~\ref{sec:the-algo} and show that it extends to all $H_0$-zigzag modules in Section~\ref{sec:elem-refin}.

Moreover, in Section~\ref{sec:alex-duality}, we show how the $H_1$-zigzag persistence of one colour can be recovered from the $H_0$-zigzag persistence of the opposite colour using Alexander duality, thereby reducing the study of the evolution of both connected components and loops to $H_0$-zigzag persistence computations. 

Before introducing the class of graph structures underlying the interval decomposition algorithm, we highlight a key result.

\subsection{Field Independence of \texorpdfstring{$H_0$}{H₀}-Zigzag Persistence}
\label{sec:field-indep}

The fact that $H_0$-zigzag persistence of a topological diagram $\mathbb X$ is the linearisation of the $\cat{set}$-valued diagram $\pi_0(\mathbb X)$ has deep consequences for its interval decomposition. The following result is an immediate consequence of Proposition~4.9 and Theorem~3.14 in~\cite{kim_generalized_2021}.

\begin{theorem}\label{thm:coeff}
Let $\mathbb X: \mathcal P_\tau \to \cat{top}$ be a topological diagram, and let $k_1$ and $k_2$ be fields. Then 
$$\mathrm{Bar}(H_0(\mathbb X;k_1)) = \mathrm{Bar}(H_0(\mathbb X;k_2)) . $$
\end{theorem}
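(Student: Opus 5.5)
The barcode of $H_0(\mathbb X;k)$ is determined by the \emph{rank invariant} over all zigzag subintervals, and these ranks are field-independent because $H_0(\mathbb X;k)=k[\pi_0(\mathbb X)]$. First, I would reduce to multiplicities: for a zigzag module $\V$ of type $\tau$ and $1\le b\le d\le|\tau|$, the multiplicity of $\langle b,d\rangle$ in $\mathrm{Bar}(\V)$ can be recovered by inclusion–exclusion from the numbers
\[
r_\V[b,d] \coloneq \dim \mathrm{Im}\big(\mylim \V[b,d] \to \mycolim \V[b,d]\big),
\]
the generalized rank of the truncated module. This holds because the generalized rank is additive over direct sums (limits and colimits commute with finite direct sums) and, for an interval module $\mathbb I_\tau\langle b',d'\rangle$, equals $1$ if $[b,d]\subseteq[b',d']$ and $0$ otherwise. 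So $r_\V[b,d]$ counts the bars containing $[b,d]$, and by Theorems~\ref{thm:gabriel} and~\ref{thm:unique-decomp} the multiplicity of $\langle b,d\rangle$ is
\[
r[b,d]-r[b-1,d]-r[b,d+1]+r[b-1,d+1],
\]
with the convention that $r=0$ outside $[1,|\tau|]$. This is the M\"obius inversion step, i.e.\ the content of Theorem~3.14 of~\cite{kim_generalized_2021}.

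Second, it then suffices to show that $r_{H_0(\mathbb X;k)}[b,d]$ does not depend on $k$. Put $\mathbb S=\pi_0(\mathbb X)[b,d]$, a diagram of finite sets, so that $H_0(\mathbb X;k)[b,d]=k[\mathbb S]$. The colimit of free modules is free on the colimit, since $k[-]$ is a left adjoint: $\mycolim k[\mathbb S]=k[\mycolim \mathbb S]$, where $\mycolim\mathbb S$ is the set of equivalence classes of the zigzag of sets.

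The limit is harder. I would identify $\mylim k[\mathbb S]$ as the space of families $(v_i)$ compatible along the structure maps. The plan is to show that the image of $\mylim k[\mathbb S]\to k[\mycolim\mathbb S]$ has dimension equal to the number of classes $c\in\mycolim\mathbb S$ such that the restricted diagram over $c$ has a nonempty limit, a count that is clearly independent of $k$. This is Proposition~4.9 of~\cite{kim_generalized_2021}.

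Concretely, I would split the diagram along the connected components of $\mycolim\mathbb S$. Over a single class $c$, the diagram restricted to $c$ is a connected zigzag of sets. The map to $k[\{c\}]=k$ sends a compatible family to the sum of coefficients of any one of its $v_i$; this sum is well defined because pushforward along set maps preserves coefficient sums.

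This last step is the main obstacle: showing that the map is nonzero exactly when the set-limit over $c$ is nonempty. If the set-limit contains an element, its indicator vectors form a compatible family with coefficient sum $1$. For the converse, I would argue by induction along the zigzag. A compatible family with nonzero coefficient sum has, at each stage, support meeting fibres consistently, and choosing elements of the support at the backward-pointing sources produces a compatible choice of elements, i.e.\ an element of the set-limit. Since the paper allows citing~\cite{kim_generalized_2021}, I would ultimately invoke Proposition~4.9 there for this identification rather than reprove it, and combine it with Theorem~3.14 to conclude.
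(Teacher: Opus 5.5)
Your inclusion--exclusion step is correct, and it gives a self-contained proof of Lemma~\ref{lem:rank-total}; your overall route is the same as the paper's. The gap is the identification in your second step,
\[
\dim\im\bigl(\mylim k[\Sb]\to k[\mycolim\Sb]\bigr)=\#\{c\in\mycolim\Sb \mid \mylim\Sb_c\neq\emptyset\},
\]
which is false. It fails exactly in the direction you flag as the main obstacle: a nonzero coefficient sum does not force a set-section.

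Here is a counterexample. Take $\tau\colon 1\to2\leftarrow3\to4\leftarrow5$ with $S_1=\{x\}$, $S_2=\{x',t\}$, $S_3=\{x'',p,q\}$, $S_4=\{y,z\}$, $S_5=\{w\}$, and maps
$x\mapsto x'$; $x''\mapsto x'$, $p,q\mapsto t$; $x'',p\mapsto y$, $q\mapsto z$; $w\mapsto z$.

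\begin{enumerate}
\item This is $\pi_0$ of the inclusions $\{a\}\subset\{a\}\cup[b,c]\supset\{a,b,c\}\subset[a,b]\cup\{c\}\supset\{c\}$ for points $a<b<c$ on a line, so it is a genuine $H_0$-zigzag.
\item The merge graph is a single path, so $\mycolim\Sb$ is a point.
\item The family
\[
u=\bigl(e_x,\;e_{x'},\;e_{x''}-e_p+e_q,\;e_z,\;e_w\bigr)\in\mylim k[\Sb]
\]
is compatible and has coefficient sum $1$ at every level, over every field. Hence $\mathrm{rk}_{1,5}=1$; indeed $k[\Sb]$ has bars $\langle 1,5\rangle$, $[2,3)$, $(3,4]$.
\item Yet $\mylim\Sb=\emptyset$. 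A section must pass through $x,x',x''$ and hence $y$, and nothing in $S_5$ maps to $y$.
\end{enumerate}

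So your count gives $0$ where the true rank is $1$. The set count is not even the rank function of any zigzag module: your own inversion formula would give $\langle 2,4\rangle$ multiplicity $1-1-1+0=-1$.

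Citing Proposition~4.9 of \cite{kim_generalized_2021} cannot close this, because the identity itself fails. The paper's Proposition~\ref{prop:fields-and-setmaps} and Lemma~\ref{lem:sections} assert the same identity. The appendix induction breaks at a forward step: $\omega(f_i(U_i))=\omega(U_i)$ need not hold, since $f_i^{-1}(f_i(U_i))$ can contain support outside $U_i$. In the example, $\omega(U_4)=\lambda_y=0$ and $U_5=\emptyset$.

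The theorem survives with the correct count. The rank $\mathrm{rk}_{b,d}(k[\Sb])$ equals the number of connected components of $\mathcal G(\Sb[b,d])$ that have vertices at both level $b$ and level $d$, which is visibly independent of $k$. To prove it, split along components $C$.

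\begin{itemize}
\item If $C$ misses level $b$ (resp.\ $d$), then $\psi=I_b\circ\Pi_b$ (resp.\ $I_d\circ\Pi_d$) vanishes on that summand.
\item If $C$ meets both levels, it meets every level in between. Then $\mycolim k[\Sb_C]\cong k[\mycolim\Sb_C]\cong k$, and every colimit map $I_i$ is nonzero.
\item Take an interval decomposition of $k[\Sb_C]$; colimits commute with direct sums. An interval summand has nonzero colimit only if no arrow of $\tau$ inside $[b,d]$ points out of its support, and then its colimit maps are nonzero exactly on its support.
\item Since the total colimit is one-dimensional, exactly one summand has nonzero colimit. Since all $I_i\neq 0$, its support is all of $[b,d]$. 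It is therefore a full bar, and the rank is $1$.
\end{itemize}

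Combined with your inclusion--exclusion formula, this proves the statement.
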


We provide a proof at the end of the section. In the special case of persistent homology, this was shown to hold in~\cite{obayashiFieldChoiceProblem2023}. We recall the necessary results and refer to Appendix~\ref{appendix:some-cat} for the required categorical notions. 

Let $\mathcal C$ be a category admitting all $\mathcal P_\tau$-shaped limits and colimits\footnote{C.f. Remark~\ref{rmk:lim-colim}.}. For a given diagram 
$$\mathbb D : \mathcal P_\tau \to \mathcal C, $$ 
denote by $(\varprojlim\mathbb D, (\pi_i)_{i\in\mathcal P_\tau})$ its limit and by $(\varinjlim\mathbb D, (\iota_i)_{i\in\mathcal P_\tau})$ its colimit. For a morphism $e:i \to j$ in $\mathcal P_\tau$, these satisfy the usual compatibility conditions
$$ \mathbb D[e] \circ \pi_i = \pi_j, \qquad\text{and} \qquad \iota_j\circ\mathbb D[e] =\iota_i   .$$
These morphisms can be combined into the following commutative diagram

\begin{center}
\begin{tikzcd}[column sep=small,]
&& \arrow[lld, "\pi_1",swap]  \varprojlim \mathbb D \arrow[d, "\pi_i",swap]   \arrow[rrd, "\pi_n"]   \\ 
D_1\arrow[drr, "\iota_1", swap] \arrow[r, <->] & \dots \arrow[r, <->] & \arrow[d,"\iota_i",swap]  D_i \arrow[r, <->] & \dots \arrow[r, <->] & D_n \arrow[lld,"\iota_n"]\\
&&      \varinjlim \mathbb D 
\end{tikzcd}
\end{center}
Concretely, this means that if we set $\psi_i=\iota_i\circ\pi_i$, we have $\psi_i=\psi_j$ for each $i,j\in\tau$. Therefore, we are able to define
$$\psi_{\mathbb D} : \varprojlim \mathbb D \to \varinjlim \mathbb D $$ to be the unique morphism such that
$$\psi_{\mathbb D} = \psi_i, \text{ for each $i\in\tau$.} $$

\begin{definition}[Def. 3.5 in~\cite{kim_generalized_2021}]
Let $\V:\mathcal P_\tau \to \cat{vect_k}$ be a zigzag module over $\tau$. For $i,j\in\tau$, the \textit{generalised rank invariant} is given by 
$$\mathrm{rk}_{i,j}(\V) \coloneq\mathrm{rank}\left( \psi_{\V[i,j]} : \varprojlim \V[i,j] \to \varinjlim  \V[i,j]\right). $$
\end{definition}

As shown in~\cite[Thm 3.14]{kim_generalized_2021}, the generalised rank invariant determines the interval decomposition fully. The following statement follows as a corollary.

\begin{lemma}\label{lem:rank-total}
Let $\V_1: \mathcal P_\tau \to \cat{Vect_{k_1}}$ and $\V_2: \mathcal P_\tau \to \cat{Vect_{k_2}}$ be zigzag modules of type $\tau$ defined over fields $k_1$ and $k_2$. Then
$$ \mathrm{rk}_{i,j}(\V_1)=\mathrm{rk}_{i,j}(\V_2) \quad \forall i\leq j \implies    \mathrm{Bar}(\V_1) = \mathrm{Bar}(\V_2).$$
\end{lemma}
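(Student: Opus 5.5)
The plan is to show that the barcode of a zigzag module can be read off from its generalised rank invariants by a field-independent combinatorial formula, so that equal ranks force equal barcodes regardless of the field.

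\textbf{Step 1: ranks of interval modules.} For a single interval module $\mathbb{I}_\tau\langle b,d\rangle$ over any field $k$, I compute $\mathrm{rk}_{i,j}$ directly. On $[i,j]$, the restriction is either zero or a single interval module on a subinterval of $[i,j]$. Its limit and colimit can be computed explicitly, and the canonical map $\psi$ has rank $1$ exactly when $[i,j]\subseteq[b,d]$, and rank $0$ otherwise. The reason is that if $[i,j]\not\subseteq[b,d]$, some index of $[i,j]$ has $V=0$, and $\psi$ factors through that zero space. This value is independent of $k$.

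\textbf{Step 2: additivity.} Limits and colimits commute with finite direct sums in $\cat{vect_k}$, since biproducts are both limits and colimits. Hence $\psi_{\V\oplus\mathbb W}=\psi_\V\oplus\psi_{\mathbb W}$, and rank is additive. By Theorem~\htest{thm:gabriel}, for $\V_l$ with barcode $\mathrm{Bar}(\V_l)$ this gives
\[
\mathrm{rk}_{i,j}(\V_l)=\#\{\langle b,d\rangle\in\mathrm{Bar}(\V_l) : b\le i\le j\le d\},
\]
counted with multiplicity.

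\textbf{Step 3: inversion.} Write $m_l(b,d)$ for the multiplicity of $\langle b,d\rangle$ in $\mathrm{Bar}(\V_l)$, and $R_l(i,j)=\sum_{b\le i,\,d\ge j}m_l(b,d)$. Möbius inversion on the poset of intervals ordered by containment gives
\[
m_l(i,j)=R_l(i,j)-R_l(i-1,j)-R_l(i,j+1)+R_l(i-1,j+1),
\]
with the convention that $R$ vanishes when an index leaves $[1,|\tau|]$. This is just inclusion–exclusion on the two conditions $b=i$ and $d=j$. Since $R_1=R_2$ by hypothesis, we get $m_1=m_2$, so $\mathrm{Bar}(\V_1)=\mathrm{Bar}(\V_2)$. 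Alternatively, one can cite \cite[Thm 3.14]{kim_generalized_2021} directly, after observing that the Möbius formula there has integer coefficients independent of $k$.

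\textbf{Main obstacle.} The hardest part is Step~1, in particular handling the limit and colimit over a zigzag with mixed orientations. A subinterval $[i,j]$ need not contain the sources or sinks of the whole module. I would verify directly that the limit of a zigzag of identities and zeros is $k$ if all $V_s\neq0$ on $[i,j]$. When some $V_s=0$, the rank of $\psi$ is zero, because $\psi=\iota_s\circ\pi_s$ factors through $V_s$. This holds for all orientations at once, so no case split on the zigzag type is needed.
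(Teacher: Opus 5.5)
Your proposal is correct and follows the same route as the paper. The paper's proof simply invokes interval decomposability together with the M\"obius inversion of the generalised rank invariant from \cite[Thm 3.14]{kim_generalized_2021}. You instead prove that citation by hand in the zigzag case: the count $\mathrm{rk}_{i,j}(\V)=\#\{\langle b,d\rangle\in\mathrm{Bar}(\V) : b\le i\le j\le d\}$ (via $\psi$ factoring through a zero space), additivity under direct sums, and an integer-coefficient inclusion--exclusion. This makes explicit the field-independence that the paper only asserts.
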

\begin{proof}
Zigzag modules are interval decomposable. Hence, by \cite[Thm 3.14]{kim_generalized_2021}, the generalised rank invariant fully recovers the barcode via Möbius inversion. This statement is independent of the field used. Therefore, equality of rank invariants implies equality of barcodes.
\end{proof}

The following general result cements the fact that the purely combinatorial structure of formigrams fully determines $H_0$-zigzag modules.

\begin{proposition}[{\cite[Prop. 4.9]{kim_generalized_2021}}]\label{prop:fields-and-setmaps}
Let $\mathbb S : \mathcal P_\tau \to \cat{set}$ be a $\cat{set}$-valued diagram, and let $k$ be a field. Then 
$$ \mathrm{rank} \left(  \psi_{k[\mathbb{S}]} : \varprojlim k[\mathbb S] \to \varinjlim k[\mathbb{S}]     \right)
= 
\left|  \im(\psi_{\mathbb S} : \varprojlim \mathbb S \to \varinjlim \mathbb{S}  )      \right|,
$$
where $|\cdot|$ denote the set cardinality.
\end{proposition}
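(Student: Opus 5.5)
We prove Proposition~\ref{prop:fields-and-setmaps} by describing the limit and colimit of $k[\mathbb S]$ explicitly in terms of $\mathbb S$. We then identify $\psi_{k[\mathbb S]}$ as a map whose image is spanned by the elements of $\im(\psi_{\mathbb S})$, viewed as basis vectors of a free vector space.

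The first step is the colimit. Since $k[-]:\cat{set}\to\cat{vect_k}$ is left adjoint to the forgetful functor, it preserves colimits. Hence $\varinjlim k[\mathbb S]\cong k[\varinjlim \mathbb S]$, and the colimit cone $\iota_i$ of $k[\mathbb S]$ is the linearisation of the set-theoretic cone. Concretely, $\varinjlim\mathbb S$ is the disjoint union $\bigsqcup_i S_i$ modulo the equivalence relation generated by $s\sim \mathbb S[i\to j](s)$. So the colimit is free on these equivalence classes.

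The second step is the limit, which is the main obstacle, since $k[-]$ does not preserve limits. Take $L=\varprojlim \mathbb S$, the set of compatible families $(s_i)_i$ with $\mathbb S[i\to j](s_i)=s_j$. There is a canonical linear map $\alpha: k[L]\to \varprojlim k[\mathbb S]$ sending a family to $(s_i)_i$, regarded as a family of basis vectors. By construction, $\psi_{k[\mathbb S]}\circ\alpha = k[\psi_{\mathbb S}]$. Since $k[\psi_{\mathbb S}]$ has image spanned by the distinct basis elements $\im(\psi_{\mathbb S})$ of $k[\varinjlim\mathbb S]$, its rank is $|\im\psi_{\mathbb S}|$. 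This gives the lower bound $\mathrm{rank}\,\psi_{k[\mathbb S]}\geq|\im\psi_{\mathbb S}|$.

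For the reverse inequality, we must show that $\im\psi_{k[\mathbb S]}\subseteq\mathrm{span}\,\im(\psi_{\mathbb S})$. Take $v=(v_i)\in\varprojlim k[\mathbb S]$ and fix an index $m$, say $m=1$. Then $\psi(v)=\iota_1(v_1)=\sum_{s\in S_1} c_s [s]$, where $c_s$ is the coefficient of $s$ in $v_1$. Group these coefficients by colimit class: the image is $\sum_{C}\big(\sum_{s\in S_1\cap C}c_s\big)[C]$, where $C$ runs over the classes of $\varinjlim\mathbb S$, each class being a connected component of the "element graph" on $\bigsqcup S_i$. Let $\lambda_C(v_i)$ denote the total coefficient of $v_i$ on $S_i\cap C$. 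Compatibility gives $\lambda_C(v_i)=\lambda_C(v_j)$ along every arrow, since linearised set maps preserve the total coefficient within each class. It remains to show that if $\lambda_C\neq 0$, then $C$ lies in $\im\psi_{\mathbb S}$, i.e.\ $C$ contains a compatible family.

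For this, use the zigzag shape. Restrict to the component $C$, which is a graph over the zigzag with vertices $S_i\cap C$. The restriction of $v$ to $C$ is a compatible family with nonzero total coefficient $\lambda$ at every $i$, so in particular $S_i\cap C\neq\emptyset$ for all $i$. We then build a compatible family inductively along the zigzag, tracking a "support" element. Start at $i=1$ with some $s_1$ carrying a nonzero coefficient. For a forward arrow $i\to i+1$, take $s_{i+1}=f(s_i)$. For a backward arrow $S_i\leftarrow S_{i+1}$, we need a preimage of $s_i$; one exists because $v_i=g(v_{i+1})$, so a nonzero coefficient at $s_i$ forces some preimage with nonzero coefficient. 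The delicate point, and the one I would check most carefully, is maintaining "nonzero coefficient" across forward arrows, where cancellation can occur. To handle it, I would choose $s_i$ to carry nonzero coefficient in a refined sense: work with the subset of each $S_i$ lying in a fixed fibre chain, or apply induction on $|\tau|$ by collapsing the last arrow, using the fact that $\lambda\neq 0$ is preserved on sub-classes. Once this is done, the family lies in $L$ and maps to $C$. This proves the equality of ranks, independently of $k$.
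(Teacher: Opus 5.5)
\textbf{Gap: the reverse inequality cannot be proved, because it is false.}

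Your lower bound is correct. The comparison map $k[\varprojlim\mathbb S]\to\varprojlim k[\mathbb S]$, together with $\varinjlim k[\mathbb S]\cong k[\varinjlim\mathbb S]$, gives $\mathrm{rank}\,\psi_{k[\mathbb S]}\geq|\im\psi_{\mathbb S}|$. Your reduction of the reverse inequality to ``every colimit class $C$ with $\lambda_C\neq 0$ contains a compatible family'' is also sound.

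The gap is exactly the step you leave open, and it cannot be closed. Cancellation along a forward arrow can destroy the only partial section while $\lambda_C$ stays nonzero. Take $\tau\colon 1\to 2\leftarrow 3\to 4\leftarrow 5$ with $S_1=\{a\}$, $S_2=\{p,q\}$, $S_3=\{x,y,z\}$, $S_4=\{c,d\}$, $S_5=\{w\}$, and structure maps $a\mapsto p$; $x\mapsto p$, $y,z\mapsto q$; $x,y\mapsto c$, $z\mapsto d$; $w\mapsto d$. The merge graph is connected, so $\varinjlim k[\mathbb S]\cong k$. Identifying elements with basis vectors,
\[
v=(a,\ p,\ x-y+z,\ d,\ w)\in\varprojlim k[\mathbb S]
\]
satisfies $\psi_{k[\mathbb S]}(v)\neq 0$ over every field. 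Indeed, $k[\mathbb S]\cong\mathbb I_\tau\langle 1,5\rangle\oplus\mathbb I_\tau\langle 2,3\rangle\oplus\mathbb I_\tau\langle 3,4\rangle$. Yet any compatible family is forced to begin $a,p,x,c$, and $c$ has no preimage in $S_5$, so $\varprojlim\mathbb S=\emptyset$. The left-hand side is $1$ and the right-hand side is $0$.

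So neither a refined choice of support nor an induction on $|\tau|$ can work. Only the inequality $\geq$ holds for general set-valued zigzag diagrams. Restricting to diagrams of topological origin does not help, since every $\mathbb S$ is $\pi_0$ of a diagram of discrete spaces. A nine-level variant even arises as $\pi_0(\mathbb F^\cup_{\mathcal V})$ for the five frames $[0,1]$, $[0.5,1.5]\cup[4,6]$, $[1.2,2]\cup[3,4.5]\cup[5.5,7]$, $[1.8,3.2]\cup[6.5,8]$, $[7.5,9]$ on a line, which is a one-row binary video after discretisation.

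\textbf{The paper's own argument fails at the same point.} Its appendix derives the upper bound from Lemma~\ref{lem:sections}. That lemma's Claim asserts $\omega(U_{i+1})=\omega(U_i)$ across a forward arrow by applying $k[f_i]$ only to the part of $v^{(i)}$ supported on $U_i$. It ignores elements outside $U_i$ that also map into $f_i(U_i)$. In the example above, $U_1=\{a\}$, $U_2=\{p\}$, $U_3=\{x\}$, $U_4=\{c\}$, but $v_4=d$ because the term $-y$ cancels $x$ at $c$. Hence $\omega(U_4)=0$ and $U_5=\emptyset$, contradicting both the Claim and the lemma.

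As stated, the proposition therefore needs extra hypotheses on $\mathbb S$, and the field-independence argument built on it needs a different justification. Your proof establishes exactly the part that is true.
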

\begin{proof}
The statement is proved in \cite[Prop.~4.9]{kim_generalized_2021} using extended persistence. For completeness, we provide an alternative self-contained proof in Appendix~\ref{app:proof-of-prop}.
\end{proof}

\begin{lemma}\label{lem:field-indep-linearisations}
Let $\mathbb S : \mathcal P_\tau \to \cat{set}$ be a $\cat{set}$-valued diagram, and let $k_1$ and $k_2$ be fields. Then
\[
\mathrm{Bar}(k_1[\Sb])=\mathrm{Bar}(k_2[\Sb]).
\]
\end{lemma}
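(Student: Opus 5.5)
The plan is to combine Lemma~\ref{lem:rank-total} with Proposition~\ref{prop:fields-and-setmaps}, applied to every truncation of $\Sb$. By Lemma~\ref{lem:rank-total}, it suffices to show that for all $i\leq j$ in $\tau$,
\[
\mathrm{rk}_{i,j}(k_1[\Sb]) = \mathrm{rk}_{i,j}(k_2[\Sb]).
\]
So the whole argument reduces to a field-independent formula for each generalised rank.

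First, fix $i\leq j$ and observe that truncation commutes with linearisation: $k[\Sb][i,j] = k[\Sb[i,j]]$. This is because both sides are obtained by restricting the functor to the full subcategory of $\mathcal P_\tau$ on the objects $\{i,\dots,j\}$, and composition with $k[-]$ is pointwise. The restricted index category is again a path category $\mathcal P_{\tau'}$ for the zigzag type $\tau'=(\tau_i,\dots,\tau_{j-1})$. Hence $\Sb[i,j]$ is itself a $\cat{set}$-valued diagram over a zigzag type, to which Proposition~\ref{prop:fields-and-setmaps} applies.

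Second, applying Proposition~\ref{prop:fields-and-setmaps} to $\Sb[i,j]$ with $k=k_1$ and with $k=k_2$ gives
\[
\mathrm{rk}_{i,j}(k_m[\Sb]) = \mathrm{rank}\,\psi_{k_m[\Sb[i,j]]} = \bigl|\im(\psi_{\Sb[i,j]})\bigr|, \qquad m=1,2.
\]
The right-hand side is purely set-theoretic and does not depend on the field, so the two ranks agree for all $i\leq j$. Lemma~\ref{lem:rank-total} then yields $\mathrm{Bar}(k_1[\Sb])=\mathrm{Bar}(k_2[\Sb])$.

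The only step needing care is the first: one must check that the truncated diagram is genuinely a diagram over a zigzag type, so that the limits, colimits and $\psi$ appearing in Proposition~\ref{prop:fields-and-setmaps} are the same objects used to define $\mathrm{rk}_{i,j}$. Since $\cat{set}$ and $\cat{vect_k}$ admit all finite limits and colimits, these exist, and the identification is immediate from the definitions. Theorem~\ref{thm:coeff} will then follow by taking $\Sb=\pi_0(\mathbb X)$, using $H_0(\mathbb X;k)=k[\pi_0(\mathbb X)]$.
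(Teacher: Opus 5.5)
Your proposal is correct and follows the paper's proof. Both apply Proposition~\ref{prop:fields-and-setmaps} to get the field-independent value $\mathrm{rk}_{i,j}(k[\Sb]) = |\im(\psi_{\Sb[i,j]})|$ and then conclude with Lemma~\ref{lem:rank-total}; your check that truncation commutes with linearisation and that $\Sb[i,j]$ is again a diagram over a zigzag type just spells out what the paper's ``it immediately follows'' leaves implicit.
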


\begin{proof}
From Proposition~\ref{prop:fields-and-setmaps}, it immediately follows that
$$ \mathrm{rk}_{i,j}\big(k_1[\Sb]\big)=\mathrm{rk}_{i,j}\big(k_2[\Sb]\big)  ,\quad \text{ for all $i\leq j$.}$$
Lemma~\ref{lem:rank-total} then yields the claim.
\end{proof}

\begin{proof}[Proof of Theorem~\ref{thm:coeff}]
The proof follows by setting $\mathbb{S} = \pi_0(\mathbb X)$ in Lemma~\ref{lem:field-indep-linearisations}.
\end{proof}

\subsection{Merge Graphs}

To capture the class of graphs arising in the interval decomposition algorithm, we introduce a category that encodes a broader notion of morphisms between sets.

\begin{definition}
The category $\cat{set_{par}}$ is defined as follows:
\begin{itemize}
\item \textbf{Objects}: Finite sets.
\item \textbf{Morphisms}: A morphism $f\in\mathrm{Hom}_{\cat{set_{par}}}(X,Y)$ is a \textit{partial function}, i.e.\ a function 
$$f: \mathrm{Dom}(f) \to Y,$$
for some subset $\mathrm{Dom}(f) \subseteq X.$ We denote it by $f:X\rightharpoonup Y$.
\end{itemize}
Composition and identities are defined in the obvious way. 
\end{definition}

\begin{remark}
We say that a partial function $f:X\rightharpoonup Y$ extends to a \textit{total function} if $\mathrm{Dom}(f)=X$, in which case it defines a morphism in $\cat{set}$. Conversely, every function $g\in\Hom_{\cat{set}}(X,Y)$ defines a partial function with $\mathrm{Dom}(g)=X$.
\end{remark}

\begin{definition}
A \emph{partial formigram} is a $\cat{set_{par}}$-valued diagram
\[
\mathbb{S}:\mathcal{P}_\tau \to \cat{set_{par}}.
\]
If all structure maps of $\mathbb{S}$ extend to total functions, we call $\mathbb{S}$ a \emph{formigram}.
\end{definition}

\begin{example}
For any topological diagram $\mathbb{X}:\mathcal{P}_\tau\to\cat{top}$, the $\cat{set}$-valued diagram $\pi_0(\mathbb{X})$ defines a formigram, by viewing each structure map as a partial function with full domain.
\end{example}

\begin{remark}
In a setting, such as that of Definition~\htest{def:constr}, where the topological spaces $(X_i)_{i=1}^n$ of a topological diagram $\mathbb{X}$ are realised as subspaces of a common ambient space $X$, i.e.\  when $X_i\subseteq X$ for all $i$, the connected components $\pi_0(X_i)$, together with the complement $X\setminus X_i$, can be interpreted as a partition of $X$. From this perspective, formigrams encode a notion of evolving partitions, recovering the original definition introduced by Kim and Mémoli~\cite{MemoliExtractingPersistentClustersinDynamicData}.
\end{remark}

Partial formigrams admit a natural representation as directed graphs, which makes their structure explicit and suitable for algorithmic manipulation.

\begin{definition}\label{def:graph-formi}
A \emph{merge graph}\footnote{The resemblance of these graphs to ant tunnels served as inspiration for the name formigram in~\cite{MemoliExtractingPersistentClustersinDynamicData}, see Figure~\ref{fig:formi}. Merge graphs associated to formigrams are referred to as barcode graphs in~\cite{deyComputingZigzagPersistence2021}. } over a zigzag type $\tau$ is a directed graph $\mathcal M$ together with a morphism of directed graphs
\[
\lvl : \mathcal{M} \to \tau.
\]
For each vertex $i$ of $\tau$, let
\(
M_i := \lvl^{-1}(i)
\)
denote the fibre over $i$. For each directed edge $e:i\to j$ in $\tau$, the set of edges between $M_i$ and $M_j$ is given by the graph of a partial function $f_e: M_i \rightharpoonup M_j$:
$$\lvl^{-1} (e) = \{ (u,f_e(u) \mid u\in\mathrm{Dom}(f_e)  \}. $$
\end{definition}

\begin{remark}
Partial formigrams and merge graphs over a fixed zigzag type $\tau$ are in bijective correspondence, with structure maps between consecutive levels corresponding to edges given as graphs of partial functions. We will therefore freely identify the two. For a partial formigram $\mathbb{S}$, we write $\mathcal{G}(\mathbb{S})$ for the associated merge graph, and for a merge graph $\mathcal{M}$, we write $\mathcal{F}(\mathcal{M})$ for the corresponding partial formigram.
\end{remark}

\begin{remark}
The defining condition on the edge sets can be interpreted locally as follows. For each vertex $u\in G_i$ and each directed edge $e:i\to j$ in $\tau$, there is at most one edge in $\mathcal{G}$ with source $u$ mapping to $e$ under the level map. In the case of a formigram, this edge exists and is unique.
\end{remark}

\begin{center}
\vspace{0.1cm}
\begin{tikzpicture}[yscale=-1, scale=1.5, swatch/.style={draw=none, fill=#1, rectangle, minimum size=2.5mm, inner sep=0pt},
f_arrow/.style={
    base_style,
    postaction={
      decorate,
      decoration={
        markings,
        mark=at position 0.6 with {\arrow[scale=1.5]{stealth}}
      }
    }
  },]
\def\xscale{1pt}
\def\yscale{1pt}
\def\nodesize{1.5pt}
\def\specialsize{2pt}

\def\ytop{1}

\def\yarrow{0.54}
\def\ylinebot{0.7}
\def\ylinetop{2.3}
\def\ylabel{0.42}
\def\upshift{0.05}

\foreach \x in {0,...,4} {
    \draw[dashed] (\x,\ylinebot+\upshift) -- (\x,\ylinetop+\upshift);
    \node at (\x,\ylabel+\upshift) { $\overset{\fpeval{1+\x}}{\bullet}$};
}

\draw[->, >=stealth, shorten >=4pt, shorten <=4pt] (0,\yarrow) -- (1,\yarrow);
\draw[<-, >=stealth, shorten >=4pt, shorten <=4pt] (1,\yarrow) -- (2,\yarrow);
\draw[<-, >=stealth, shorten >=4pt, shorten <=4pt] (2,\yarrow) -- (3,\yarrow);
\draw[->, >=stealth, shorten >=4pt, shorten <=4pt] (3,\yarrow) -- (4,\yarrow);

\node (a) at (0,1) [circle, draw, fill=white, inner sep=1.5pt] {};
\node (a_2) at (0,2) [circle, draw, fill=white, inner sep=1.5pt] {};

\node (b) at (1,1.5) [circle, draw, fill=white, inner sep=1.5pt] {};

\node (d) at (2,1) [circle, draw, fill=white, inner sep=1.5pt] {};
\node (d_2) at (2,1.5) [circle, draw, fill=white, inner sep=1.5pt] {};
\node (d_3) at (2,2) [circle, draw, fill=white, inner sep=1.5pt] {};

\node (e) at (3,1) [circle, draw, fill=white, inner sep=1.5pt] {};
\node (e_2) at (3,1.5) [circle, draw, fill=white, inner sep=1.5pt] {};
\node (e_3) at (3,2) [circle, draw, fill=white, inner sep=1.5pt] {};

\node (f) at (4,1.5) [circle, draw, fill=white, inner sep=1.5pt] {};



\draw[f_arrow] (a) to (b);
\draw[f_arrow] (a_2) to (b);

\draw[f_arrow] (d) to (b);
\draw[f_arrow] (d_2) to (b);
\draw[f_arrow] (d_3) to (b);

\draw[f_arrow] (e) to (d);
\draw[f_arrow] (e_2) to (d_2);
\draw[f_arrow] (e_3) to (d_3);

\draw[f_arrow] (e) to (f);
\draw[f_arrow] (e_2) to (f);
\draw[f_arrow] (e_3) to (f);


\node at (2, \ylabel-0.4) {$\tau$};
\node at (2, \ylabel+2.3) {$\mathcal{G}(\mathbb{S}_1)$};

\begin{scope}[xshift=5cm]
\foreach \x in {0,...,4} {
    \draw[dashed] (\x,\ylinebot+\upshift) -- (\x,\ylinetop+\upshift);
    \node at (\x,\ylabel+\upshift) { $\overset{\fpeval{1+\x}}{\bullet}$};
}

\draw[->, >=stealth, shorten >=4pt, shorten <=4pt] (0,\yarrow) -- (1,\yarrow);
\draw[<-, >=stealth, shorten >=4pt, shorten <=4pt] (1,\yarrow) -- (2,\yarrow);
\draw[<-, >=stealth, shorten >=4pt, shorten <=4pt] (2,\yarrow) -- (3,\yarrow);
\draw[->, >=stealth, shorten >=4pt, shorten <=4pt] (3,\yarrow) -- (4,\yarrow);

\node (a) at (0,1) [circle, draw, fill=white, inner sep=1.5pt] {};
\node (a_2) at (0,2) [circle, draw, fill=white, inner sep=1.5pt] {};

\node (b) at (1,1.5) [circle, draw, fill=white, inner sep=1.5pt] {};

\node (d) at (2,1) [circle, draw, fill=white, inner sep=1.5pt] {};
\node (d_2) at (2,1.5) [circle, draw, fill=white, inner sep=1.5pt] {};
\node (d_3) at (2,2) [circle, draw, fill=white, inner sep=1.5pt] {};

\node (e) at (3,1) [circle, draw, fill=white, inner sep=1.5pt] {};
\node (e_2) at (3,1.5) [circle, draw, fill=white, inner sep=1.5pt] {};
\node (e_3) at (3,2) [circle, draw, fill=white, inner sep=1.5pt] {};

\node (f) at (4,1.5) [circle, draw, fill=white, inner sep=1.5pt] {};



\draw[f_arrow] (a) to (b);

\draw[f_arrow] (d_2) to (b);
\draw[f_arrow] (d_3) to (b);

\draw[f_arrow] (e) to (d);
\draw[f_arrow] (e_2) to (d_2);
\draw[f_arrow] (e_3) to (d_3);

\draw[f_arrow] (e_2) to (f);
\draw[f_arrow] (e_3) to (f);


\node at (2, \ylabel-0.4) {$\tau$};
\node at (2, \ylabel+2.3) {$\mathcal{G}(\mathbb{S}_2)$};
\end{scope}
\end{tikzpicture}
\vspace{0.1cm}
\captionof{figure}{\textbf{Merge graphs of partial formigrams $\mathbb{S}_1,\mathbb{S}_2:\mathcal{P}_\tau\to\cat{set_{par}}$.} Note that the edge sets of $\mathbb{S}_1$ arise as graphs of total functions, hence $\mathbb{S}_1$ is in fact a formigram, not just a partial formigram.} \label{fig:formi}
\end{center}

For a topological diagram $\mathbb{X}$, the merge graph $\mathcal{G}(\pi_0(\mathbb{X}))$ thus encodes the evolution of connected components across $\mathbb{X}$ in a graph. Before turning to their efficient construction in the setting of binary videos, we introduce the following taxonomy of vertices based on their connectivity to adjacent levels.

\begin{definition} Let $v$ be a vertex at level $i$ in a merge graph $\mathcal{M}$. We denote its \emph{left-degree} $\ldeg(v)$ and \emph{right-degree} $\rdeg(v)$ as the number of neighbours (in the underlying undirected graph) at levels $i-1$ and $i+1$, respectively. We categorise $v$ as a: 
\begin{itemize} 
\item \textbf{Birth} if $\ldeg(v)=0$, or a \textbf{merging} if $\ldeg(v)>1$.
\item \textbf{Death} if $\rdeg(v)=0$, or a \textbf{splitting} if $\rdeg(v)>1$ . 
\end{itemize} 
The \emph{merging multiplicity} and \emph{splitting multiplicity} of a vertex $v$ are defined by
$
\mathrm{merge}(v) \coloneqq \deg^l(v)-1,
$ and $
\mathrm{split}(v) \coloneqq \deg^r(v)-1.
$ Moreover, vertices at the first (resp. final) level are considered birth (resp. death) vertices. \end{definition}

\begin{remark} The labels ``birth” and ``merging” are mutually exclusive, as are ``death” and ``splitting”. However, these pairs are independent: a vertex may, for instance, be both a birth and a death, or both a merging and a splitting vertex. See Figure~\ref{fig:mon-graph} for an example.

\end{remark}

\begin{figure}[htbp]
\begin{center}
\hspace{-1.3cm}
\begin{tikzpicture}[yscale=-1, scale=1.5, swatch/.style={draw=none, fill=#1, rectangle, minimum size=2.5mm, inner sep=0pt},]
\def\xscale{1pt}
\def\yscale{1pt}
\def\nodesize{1.5pt}
\def\specialsize{2pt}

\def\ytop{1}

\def\yarrow{0.54}
\def\ylinebot{0.7}
\def\ylinetop{3.95}
\def\ylabel{0.42}

\foreach \x in {0,...,9} {
    \draw[dashed] (\x,\ylinebot) -- (\x,\ylinetop);
    \node at (\x,\ylabel) { $\overset{\fpeval{1+\x}}{\bullet}$};
}

\draw[<-, >=stealth, shorten >=4pt, shorten <=4pt] (0,\yarrow) -- (1,\yarrow);
\draw[->, >=stealth, shorten >=4pt, shorten <=4pt] (1,\yarrow) -- (2,\yarrow);
\draw[->, >=stealth, shorten >=4pt, shorten <=4pt] (2,\yarrow) -- (3,\yarrow);
\draw[->, >=stealth, shorten >=4pt, shorten <=4pt] (3,\yarrow) -- (4,\yarrow);
\draw[<-, >=stealth, shorten >=4pt, shorten <=4pt] (4,\yarrow) -- (5,\yarrow);
\draw[<-, >=stealth, shorten >=4pt, shorten <=4pt] (5,\yarrow) -- (6,\yarrow);
\draw[<-, >=stealth, shorten >=4pt, shorten <=4pt] (6,\yarrow) -- (7,\yarrow);
\draw[<-, >=stealth, shorten >=4pt, shorten <=4pt] (7,\yarrow) -- (8,\yarrow);
\draw[->, >=stealth, shorten >=4pt, shorten <=4pt] (8,\yarrow) -- (9,\yarrow);

\pgfkeys{/mynode,
    xs=\xscale, 
    ys=\yscale, 
    isep=2*\nodesize,
    fill=white
  }

\colorlet{birth_color}{my_gold}
\colorlet{split_color}{my_blue}

\colorlet{merge_color}{my_pink}
\colorlet{death_color}{black}

\mynode{name=a_1_1, fill=birth_color, x=0, y=2, xs=\xscale, ys=\yscale, isep=\specialsize}

\mynode[split_color]{name=a_1_2, fill=birth_color, x=0, y=3, xs=\xscale, ys=\yscale, isep=\specialsize}

\mynode{name=b_1_1, fill=white, x=1, y=2, xs=\xscale, ys=\yscale, isep=\nodesize}
\mynode{name=b_1_2, fill=white, x=1, y=2.5, xs=\xscale, ys=\yscale, isep=\nodesize}
\mynode{name=b_1_3, fill=white, x=1, y=3.5, xs=\xscale, ys=\yscale, isep=\nodesize}

\mynode[death_color]{name=b_1_4, fill=birth_color, x=4, y=2, xs=\xscale, ys=\yscale, isep=\specialsize}

\mynode{name=c_1_1, fill=white, x=2, y=2, xs=\xscale, ys=\yscale, isep=\nodesize}
\mynode{name=c_1_2, fill=merge_color, x=2, y=3, xs=\xscale, ys=\yscale, isep=\specialsize}

\mynode{name=d_1_1, fill=merge_color, x=3, y=2.5, xs=\xscale, ys=\yscale, isep=\specialsize}
\mynode{name=d_1_2, fill=birth_color, x=3, y=3., xs=\xscale, ys=\yscale, isep=\specialsize}

\mynode[split_color]{name=e_1_1, fill=merge_color, x=4, y=2.5, xs=\xscale, ys=\yscale, isep=\specialsize}

\mynode{name=f_1_2, fill=white, x=5, y=3, xs=\xscale, ys=\yscale, isep=\nodesize}
\mynode{name=f_1_1, fill=split_color, x=5, y=2, xs=\xscale, ys=\yscale, isep=\specialsize}

\mynode{name=g_1_1, fill=white, x=6, y=1.5, xs=\xscale, ys=\yscale, isep=\nodesize}
\mynode{name=g_1_2, fill=white, x=6, y=2, xs=\xscale, ys=\yscale, isep=\nodesize}
\mynode{name=g_1_3, fill=split_color, x=6, y=3, xs=\xscale, ys=\yscale, isep=\specialsize}

\mynode{name=h_1_1, fill=white, x=7, y=1.5, xs=\xscale, ys=\yscale, isep=\nodesize}
\mynode{name=h_1_2, fill=white, x=7, y=2, xs=\xscale, ys=\yscale, isep=\nodesize}
\mynode{name=h_1_3, fill=white, x=7, y=3, xs=\xscale, ys=\yscale, isep=\nodesize}
\mynode{name=h_1_4, fill=white, x=7, y=3.5, xs=\xscale, ys=\yscale, isep=\nodesize}

\mynode{name=i_1_1, fill=white, x=8, y=2, xs=\xscale, ys=\yscale, isep=\nodesize}
\mynode{name=i_1_2, fill=white, x=8, y=3, xs=\xscale, ys=\yscale, isep=\nodesize}
\mynode{name=i_1_3, fill=white, x=8, y=3.5, xs=\xscale, ys=\yscale, isep=\nodesize}
\mynode{name=i_1_4, fill=white, x=8, y=1.5, xs=\xscale, ys=\yscale, isep=\nodesize}

\mynode[death_color]{name=j_1_1, fill=merge_color, x=9, y=2.5, xs=\xscale, ys=\yscale, isep=\specialsize}
\mynode{name=j_1_2, fill=white, x=9, y=3.5, xs=\xscale, ys=\yscale, isep=\nodesize}

\mynode{name=k_1_1, fill=death_color, x=9, y=3.5, xs=\xscale, ys=\yscale, isep=\specialsize}

\draw[b_arrow] (a_1_1) to (b_1_1);
\draw[b_arrow] (a_1_2) to (b_1_2);
\draw[b_arrow] (a_1_2) to (b_1_3);

\draw[f_arrow] (b_1_1) to (c_1_1);
\draw[f_arrow] (b_1_2) to (c_1_2);
\draw[f_arrow] (b_1_3) to (c_1_2);

\draw[f_arrow] (c_1_1) to (d_1_1);
\draw[f_arrow] (c_1_2) to (d_1_1);

\draw[f_arrow] (d_1_1) to (e_1_1);
\draw[f_arrow] (d_1_2) to (e_1_1);

\draw[b_arrow] (e_1_1) to (f_1_1);
\draw[b_arrow] (e_1_1) to (f_1_2);

\draw[b_arrow] (f_1_1) to (g_1_1);
\draw[b_arrow] (f_1_1) to (g_1_2);
\draw[b_arrow] (f_1_2) to (g_1_3);

\draw[b_arrow] (g_1_1) to (h_1_1);
\draw[b_arrow] (g_1_2) to (h_1_2);
\draw[b_arrow] (g_1_3) to (h_1_3);
\draw[b_arrow] (g_1_3) to (h_1_4);

\draw[b_arrow] (h_1_2) to (i_1_1);
\draw[b_arrow] (h_1_3) to (i_1_2);
\draw[b_arrow] (h_1_4) to (i_1_3);

\draw[f_arrow] (i_1_1) to (j_1_1);
\draw[f_arrow] (i_1_2) to (j_1_1);
\draw[f_arrow] (i_1_3) to (j_1_2);

\draw[b_arrow] (k_1_1) to (j_1_2);

\draw[b_arrow] (h_1_1) to (i_1_4);
\draw[f_arrow] (i_1_4) to (j_1_1);

\node at (1.5,0.83) [below=0.0cm, draw=black!80, fill=white, rounded corners=1pt, inner sep=1pt] {
\setlength{\tabcolsep}{2pt}

\begin{tabular}[colsep=10pt]{ll ll}
\tikz\node[swatch=birth_color]{};  & \footnotesize Birth
& \tikz\node[swatch=merge_color]{};  & \footnotesize Merging \\

\tikz\node[swatch=split_color]{};  & \footnotesize Splitting
& \tikz\node[swatch=death_color]{}; & \footnotesize Death \\
\end{tabular}
};

\def\freemod{3.95+0.8}

\node at (-0.5, \ylabel) {$\tau$:};
\node at (-0.5, 2.5) {$\mathcal{G}$:};

\end{tikzpicture}
\captionsetup{hypcap=false}
\captionof{figure}{%
  \textbf{Merge Graph} $\mathcal{M}$: Nodes with two colours represent dual-type states. Arrows are consistently oriented with the zigzag type $\tau$, and each vertex is the tail of at most one edge in each direction. 
}\label{fig:mon-graph}
\end{center}
\end{figure}

Next, we introduce some common constructions.

\begin{definition}
Let $\mathbb S_1, \mathbb S_2 : \mathcal P_\tau \to \cat{set_{par}}$ be partial formigrams. Their \emph{disjoint union} is the partial formigram
\[
\mathbb S_1 \sqcup \mathbb S_2 : \mathcal P_\tau \to \cat{set_{par}}
\]
defined pointwise by
\[
(\mathbb S_1 \sqcup \mathbb S_2)(i) \coloneq \mathbb S_1(i) \sqcup \mathbb S_2(i),
\]
and for each morphism $e:i\to j$ in $\mathcal P_\tau$ by
\[
(\mathbb S_1 \sqcup \mathbb S_2)(e) \coloneq \mathbb S_1(e) \sqcup \mathbb S_2(e),
\]
where the right-hand side denotes the disjoint union of partial maps.
\end{definition}

\begin{remark}
The disjoint union defined above inherits from the coproduct in $\cat{set_{par}}$, leading to the pointwise definitions. Moreover, this notion carries over to merge graphs. Concretely, for merge graphs $\mathcal M_1, \mathcal M_2$ over $\tau$, we set
\[
\mathcal M_1 \sqcup \mathcal M_2 \coloneq \mathcal G\bigl(\mathcal F(\mathcal M_1)\sqcup \mathcal F(\mathcal M_2)\bigr).
\]
\end{remark}

\begin{center}
\vspace{0.1cm}
\begin{tikzpicture}[yscale=-1, scale=1.5, swatch/.style={draw=none, fill=#1, rectangle, minimum size=2.5mm, inner sep=0pt},
f_arrow/.style={
    base_style,
    postaction={
      decorate,
      decoration={
        markings,
        mark=at position 0.6 with {\arrow[scale=1.5]{stealth}}
      }
    }
  },]
\def\xscale{1pt}
\def\yscale{1pt}
\def\nodesize{1.5pt}
\def\specialsize{2pt}

\def\ytop{1}

\def\yarrow{0.54}
\def\ylinebot{0.7}
\def\ylinetop{3.45}
\def\ylabel{0.42}
\def\upshift{0.05}

\foreach \x in {0,...,4} {
    \draw[dashed] (\x,\ylinebot+\upshift) -- (\x,\ylinetop+\upshift);
    \node at (\x,\ylabel+\upshift) { $\overset{\fpeval{1+\x}}{\bullet}$};
}

\draw[->, >=stealth, shorten >=4pt, shorten <=4pt] (0,\yarrow) -- (1,\yarrow);
\draw[<-, >=stealth, shorten >=4pt, shorten <=4pt] (1,\yarrow) -- (2,\yarrow);
\draw[<-, >=stealth, shorten >=4pt, shorten <=4pt] (2,\yarrow) -- (3,\yarrow);
\draw[<-, >=stealth, shorten >=4pt, shorten <=4pt] (3,\yarrow) -- (4,\yarrow);

\node (a) at (0,1) [circle, draw, fill=white, inner sep=1.5pt] {};
\node (a_2) at (0,2) [circle, draw, fill=white, inner sep=1.5pt] {};

\node (b) at (1,1.5) [circle, draw, fill=white, inner sep=1.5pt] {};

\node (d) at (2,1) [circle, draw, fill=white, inner sep=1.5pt] {};
\node (d_2) at (2,1.5) [circle, draw, fill=white, inner sep=1.5pt] {};
\node (d_3) at (2,2) [circle, draw, fill=white, inner sep=1.5pt] {};





\draw[f_arrow] (a) to (b);
\draw[f_arrow] (a_2) to (b);

\draw[f_arrow] (d) to (b);
\draw[f_arrow] (d_2) to (b);
\draw[f_arrow] (d_3) to (b);




\node (c2_1) at (1,2.75) [circle, draw, fill=white, inner sep=1.5pt] {};

\node (d2_1) at (2,2.75) [circle, draw, fill=white, inner sep=1.5pt] {};
\node (e2_1) at (3,2.25) [circle, draw, fill=white, inner sep=1.5pt] {};
\node (e2_2) at (3,3.25) [circle, draw, fill=white, inner sep=1.5pt] {};

\node (f2_1) at (4,2.25) [circle, draw, fill=white, inner sep=1.5pt] {};
\node (f2_2) at (4,3.25) [circle, draw, fill=white, inner sep=1.5pt] {};

\draw[f_arrow] (d2_1) to (c2_1);
\draw[f_arrow] (e2_1) to (d2_1);
\draw[f_arrow] (e2_2) to (d2_1);

\draw[f_arrow] (f2_1) to (e2_1);
\draw[f_arrow] (f2_2) to (e2_2);

\node at (2, \ylabel-0.4) {$\tau$};

\draw[dotted] (-0.15,0.85) -- (-0.15,2.15);
\draw[dotted] (-0.15,2.15) -- (2.15,2.15);
\draw[dotted] (2.15,2.15) -- (2.15,0.85);
\draw[dotted] (-0.15,0.85) -- (2.15,0.85);

\draw[dotted] 
    (2,2.65) -- (0.9,2.65) -- (0.9,2.85) -- (2,2.85) -- (3, 3.35) -- (4.1, 3.35) -- (4.1, 2.15) -- (3, 2.15) -- cycle;

\node at (-0.65, 1.5) {$\mathcal G(\Sb_1)$};

\node at (4.6, 2.75) {$\mathcal G(\Sb_2)$};

\end{tikzpicture}
\vspace{0.1cm}
\captionof{figure}{\textbf{Disjoint union of two formigrams $\mathbb{S}_1$ and $\mathbb{S}_2$:}  $\mathcal G(\Sb_1 \sqcup \Sb_2)=\mathcal G(\Sb_1) \sqcup \mathcal G(\Sb_2)$. Moreover, they form the two connected components of this graph.} 
\end{center}

\begin{definition}
Let $\mathcal M$ be a merge graph. A \emph{connected component} of $\mathcal M$ is a connected component of its underlying undirected graph, obtained by forgetting the directions of the edges.
\end{definition}

\begin{remark}
Each connected component $\mathcal C$ of a merge graph $\mathcal M$ determines a subfunctor of $\mathcal F(\mathcal M)$ by restricting to the vertices in $\mathcal C$. Indeed, the partial maps preserve connected components, so the structure maps restrict. In this way, $\mathcal M$ decomposes as the disjoint union of its connected components $\mathcal M = \bigsqcup_i \mathcal C_i$.
\end{remark}

\begin{remark}
The set of connected components of a merge graph $\mathcal M$ can be identified with the colimit $\varinjlim \mathcal F(\mathcal M)$, see Remark~\ref{rmk:cc-as-colim}.
\end{remark}

\subsection{Barcode Graphs}

In this section, we show that a certain type of merge graphs provides a direct way to access interval decompositions. Given a topological diagram $\mathbb X$, we aim to understand the interval decomposition of the associated $H_0$-zigzag module
\[
H_0(\mathbb X;k) \cong \bigoplus_{j=1}^N \mathbb I_{\tau} \langle b_j , d_j \rangle.
\]
We will see that this decomposition can be seen as arising from a special type of merge graphs. In particular, the decomposition is induced by a notion of linearisation of merge graphs. We start by extending the free functor to $\cat{set_{par}}$. We reuse the notation $k[-]$ uniformly for free $k$-linearisations, and ensure that the underlying category will always be clear from context.
\begin{definition}
The \emph{linearisation} assigns to each set $X$ the free vector space
\[
k[X] \coloneq \bigoplus_{x\in X} k\cdot e_x,
\]
and to each partial function $f : X \rightharpoonup Y$ the linear map
$k[f] : k[X] \to k[Y]$ given by
\[
k[f](e_x) =
\begin{cases}
e_{f(x)}, & x \in \mathrm{Dom}(f),\\
0, & x \notin \mathrm{Dom}(f).
\end{cases}
\]
\end{definition}

\begin{lemma}\label{lem:left-adjoint}
The above assignment defines a functor
\[
k[-] : \cat{set_{par}} \to \cat{vect_k},
\]
called the \emph{linearisation functor}. It is 
to the forgetful functor 
\(
U : \cat{vect_k} \to \cat{set_{par}}.
\)
\end{lemma}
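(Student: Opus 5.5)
The lemma makes two claims: that $k[-]$ is a functor on $\cat{set_{par}}$, and that it is left adjoint to a forgetful functor $U$. The statement has a typo ("It is to the forgetful functor"); I read it as asserting that $k[-]$ is left adjoint to $U$. I first fix $U$: it sends a vector space $V$ to its underlying set of vectors, and a linear map $\phi:V\to W$ to the partial function $U\phi$ with domain the set of $v$ where $\phi(v)\neq 0$, given there by $v\mapsto\phi(v)$. I think the intended $U$ should instead send $V$ to the pointed set $V$ with $0$ treated as "undefined". That is, use $U(V)=V\setminus\{0\}$, with $U\phi$ defined on $\{v:\phi(v)\neq0\}$. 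With this choice, composition is respected: $\psi\phi(v)\neq 0$ forces $\phi(v)\neq 0$, so both sides of $U(\psi\phi)=U\psi\circ U\phi$ have the same domain.

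\textbf{Functoriality.} This is a direct check on basis vectors. Identities clearly go to identities. For $f:X\rightharpoonup Y$ and $g:Y\rightharpoonup Z$, recall $\mathrm{Dom}(g\circ f)=\{x\in\mathrm{Dom}(f): f(x)\in\mathrm{Dom}(g)\}$. Evaluating $k[g]k[f](e_x)$ gives $e_{g(f(x))}$ exactly when $x\in\mathrm{Dom}(g\circ f)$ and $0$ otherwise, and this matches $k[g\circ f](e_x)$. Since both sides are linear, checking on the basis suffices.

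\textbf{Adjunction.} I would construct a natural bijection
\[
\Hom_{\cat{vect_k}}(k[X],V)\;\cong\;\Hom_{\cat{set_{par}}}(X,U V).
\]
Given a linear $\phi$, send it to the partial function $x\mapsto\phi(e_x)$, defined on those $x$ with $\phi(e_x)\neq0$. Conversely, given $h:X\rightharpoonup V\setminus\{0\}$, extend linearly: set $e_x\mapsto h(x)$ on $\mathrm{Dom}(h)$ and $e_x\mapsto 0$ elsewhere. These two assignments are mutually inverse because a linear map is determined by its values on a basis. Equivalently, I could exhibit the unit $\eta_X:X\rightharpoonup U k[X]$, the total map $x\mapsto e_x$, and verify its universal property.

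\textbf{Naturality.} I expect the main obstacle to be naturality in $X$ along partial maps $f$. Precomposing $\phi$ with $k[f]$ kills $e_x$ for $x\notin\mathrm{Dom}(f)$. On the other side, precomposing the partial function with $f$ leaves those same $x$ undefined. So the domains agree, and this is exactly where using $V\setminus\{0\}$ is needed. Naturality in $V$ follows from the same domain bookkeeping: $\psi\phi(e_x)\neq 0$ holds precisely when $x$ lies in the domain of the composite $U\psi\circ(\text{partial map of }\phi)$.
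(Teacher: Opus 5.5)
Your proposal is correct and follows the same route as the paper's proof in the appendix. That proof checks functoriality on basis vectors, then gives the hom-set bijection sending $\phi$ to the partial function $x\mapsto\phi(e_x)$, defined where $\phi(e_x)\neq 0$, and inverts it by linear extension by zero. Your final choice $U(V)=V\setminus\{0\}$ is what the paper leaves implicit, and it is needed for the two constructions to be mutually inverse; your first version, whose $U(\mathrm{id}_V)$ is not the identity, would not work, so you were right to drop it. Your domain bookkeeping for naturality also spells out what the paper only asserts.
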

\begin{proof}
See Appendix~\ref{app:proof-adjoint}.
\end{proof}

This definition enables us to associate a zigzag module to merge graphs over $\tau$.

\begin{definition}
Let $\mathbb{S}:\mathcal P_\tau\to\cat{set_{par}}$ be a partial formigram. Its \emph{linearisation} is the zigzag module over $\tau$
\[
k[\mathbb S]\coloneq k[-]\circ \mathbb S:\mathcal P_\tau\to\cat{vect_k}.
\]
If $\mathcal M$ is a merge graph over $\tau$, we define its \emph{linearisation} to be
\[
k[\mathcal M]\coloneq k[\mathcal F(\mathcal M)]\in\cat{Zig_\tau},
\]
where $\mathcal F(\mathcal M)$ denotes the associated partial formigram.
\end{definition}

\begin{remark}
Choosing an ordering on the fibres $M_i$ fixes an ordered basis for $k[M_i]$, and we obtain a coordinate isomorphism $k[M_i]\overset{\sim}{\to}k^{|M_i|}$, allowing us to represent the morphisms by matrices.
\end{remark}

\begin{figure}[htbp]
\begin{center}
\vspace{0.1cm}
\begin{tikzpicture}[yscale=-1, scale=1.5, swatch/.style={draw=none, fill=#1, rectangle, minimum size=2.5mm, inner sep=0pt},
f_arrow/.style={
    base_style,
    postaction={
      decorate,
      decoration={
        markings,
        mark=at position 0.6 with {\arrow[scale=1.5]{stealth}}
      }
    }
  },]
\def\xscale{1pt}
\def\yscale{1pt}
\def\nodesize{1.5pt}
\def\specialsize{2pt}

\def\ytop{1}

\def\yarrow{0.54}
\def\ylinebot{0.7}
\def\ylinetop{2.3}
\def\ylabel{0.42}
\def\upshift{0.05}

\foreach \x in {0,...,4} {
    \draw[dashed] (\x,\ylinebot+\upshift) -- (\x,\ylinetop+\upshift);
    \node at (\x,\ylabel+\upshift) { $\overset{\fpeval{1+\x}}{\bullet}$};
}

\draw[->, >=stealth, shorten >=4pt, shorten <=4pt] (0,\yarrow) -- (1,\yarrow);
\draw[<-, >=stealth, shorten >=4pt, shorten <=4pt] (1,\yarrow) -- (2,\yarrow);
\draw[<-, >=stealth, shorten >=4pt, shorten <=4pt] (2,\yarrow) -- (3,\yarrow);
\draw[->, >=stealth, shorten >=4pt, shorten <=4pt] (3,\yarrow) -- (4,\yarrow);

\node (a) at (0,1) [circle, draw, fill=white, inner sep=1.5pt] {};
\node (a_2) at (0,2) [circle, draw, fill=white, inner sep=1.5pt] {};

\node (b) at (1,1.5) [circle, draw, fill=white, inner sep=1.5pt] {};

\node (d) at (2,1) [circle, draw, fill=white, inner sep=1.5pt] {};
\node (d_2) at (2,1.5) [circle, draw, fill=white, inner sep=1.5pt] {};
\node (d_3) at (2,2) [circle, draw, fill=white, inner sep=1.5pt] {};

\node (e) at (3,1) [circle, draw, fill=white, inner sep=1.5pt] {};
\node (e_2) at (3,1.5) [circle, draw, fill=white, inner sep=1.5pt] {};
\node (e_3) at (3,2) [circle, draw, fill=white, inner sep=1.5pt] {};

\node (f) at (4,1.5) [circle, draw, fill=white, inner sep=1.5pt] {};



\draw[f_arrow] (a) to (b);

\draw[f_arrow] (d_2) to (b);
\draw[f_arrow] (d_3) to (b);

\draw[f_arrow] (e) to (d);
\draw[f_arrow] (e_2) to (d_2);
\draw[f_arrow] (e_3) to (d_3);

\draw[f_arrow] (e_2) to (f);
\draw[f_arrow] (e_3) to (f);


\def\freemod{\ylabel+2.6}

\node (F0) at (0,\freemod) {$\field^2$};

\node (F1) at (1,\freemod) {$\field$};

\draw[->] (F0) -- (F1) node[midway, above] {\scriptsize $\left(\begin{smallmatrix}1&0\end{smallmatrix}\right)$};

\node (F2) at (2,\freemod) {$\field^3$};

\draw[<-] (F1) -- (F2) node[midway, above] {\scriptsize $\left(\begin{smallmatrix}0&1&1\end{smallmatrix}\right)$};

\node (F3) at (3,\freemod) {$\field^3$};

\draw[<-] (F2) -- (F3) node[midway, above] {\scriptsize $\left(\begin{smallmatrix}1&0&0\\0&1&0\\0&0&1\end{smallmatrix}\right)$};

\node (F4) at (4,\freemod) {$\field$};

\draw[->] (F3) -- (F4) node[midway, above] {\scriptsize $\left(\begin{smallmatrix}0&1&1\end{smallmatrix}\right)$};

\node[align=right] at (-0.8, \ylabel+0.1) {$\tau$:};
\node[align=right] at (-0.8, \ylabel+1.1) {$\mathcal{M}$:};
\node[align=right] at (-0.8, \freemod) {$k[\mathcal{M}]$:};

\end{tikzpicture}
\vspace{0.1cm}
\captionof{figure}{\textbf{Linearisation $k[\mathcal M]$ of a Merge graph $\mathcal M$}. Note that an ordering on the fibres (top to bottom) was chosen to get the matrix representations.} 
\end{center}
\end{figure}

The linearisation behaves nicely with respect to disjoint unions.

\begin{proposition}\label{prop:com-direct-sum}
Let $\mathcal M_1$ and $\mathcal M_2$ be merge graphs over $\tau$. Then 
$$k[\mathcal M_1\sqcup\mathcal M_2] \cong k[\mathcal M_1] \oplus k[\mathcal M_2]. $$
\end{proposition}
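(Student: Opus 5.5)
The plan is to prove the isomorphism pointwise and then check that it is natural, i.e.\ that it commutes with the structure maps. By definition, $\mathcal M_1\sqcup\mathcal M_2 = \mathcal G(\mathcal F(\mathcal M_1)\sqcup\mathcal F(\mathcal M_2))$. Write $\mathbb S_1=\mathcal F(\mathcal M_1)$ and $\mathbb S_2=\mathcal F(\mathcal M_2)$. Since $\mathcal F$ and $\mathcal G$ are mutually inverse, we have $k[\mathcal M_1\sqcup\mathcal M_2]=k[\mathbb S_1\sqcup\mathbb S_2]$. The proposition therefore reduces to showing $k[\mathbb S_1\sqcup\mathbb S_2]\cong k[\mathbb S_1]\oplus k[\mathbb S_2]$ in $\cat{Zig}_\tau$.

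\textbf{Pointwise isomorphism.} For each vertex $i$ of $\tau$, the fibre is $(\mathbb S_1\sqcup\mathbb S_2)(i)=S_{1,i}\sqcup S_{2,i}$. The free vector space on a disjoint union splits canonically, so there is an isomorphism
\[
\phi_i : k[S_{1,i}\sqcup S_{2,i}] \to k[S_{1,i}]\oplus k[S_{2,i}],
\]
given on basis vectors by $e_x\mapsto (e_x,0)$ for $x\in S_{1,i}$ and $e_x\mapsto(0,e_x)$ for $x\in S_{2,i}$. It sends a basis to a basis, so it is a linear isomorphism.

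\textbf{Naturality.} Let $e:i\to j$ be an edge of $\tau$, and let $f=\mathbb S_1(e)\sqcup\mathbb S_2(e)$. This partial function has domain $\mathrm{Dom}(\mathbb S_1(e))\sqcup\mathrm{Dom}(\mathbb S_2(e))$, and it agrees with $\mathbb S_l(e)$ on the $S_{l,i}$-part. It suffices to check $\phi_j\circ k[f]=(k[\mathbb S_1(e)]\oplus k[\mathbb S_2(e)])\circ\phi_i$ on basis vectors $e_x$. Take $x\in S_{1,i}$; the case $x\in S_{2,i}$ is symmetric.
\begin{itemize}
\item If $x\in\mathrm{Dom}(\mathbb S_1(e))$, both sides equal $(e_{\mathbb S_1(e)(x)},0)$.
\item Otherwise $x\notin\mathrm{Dom}(f)$, and both sides are $0$.
\end{itemize}
Hence the family $(\phi_i)$ is a natural transformation. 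Since each component is invertible and kernels, cokernels and direct sums in $\cat{Zig}_\tau$ are computed pointwise, it is an isomorphism of zigzag modules.

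\textbf{Main obstacle.} The only delicate point is bookkeeping with partial functions. One must make sure that "undefined" in the disjoint union corresponds exactly to the zero map on each summand; this is precisely the case split above. Alternatively, one could argue abstractly: by Lemma~\ref{lem:left-adjoint}, $k[-]$ is a left adjoint, so it preserves coproducts, and the disjoint union is the coproduct in $\cat{set_{par}}$. This would give the pointwise isomorphism directly, and naturality would then follow from the universal property. I would nonetheless present the explicit basis verification, since it is short and self-contained.
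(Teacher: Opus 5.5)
Your proof is correct. At the key step it takes a more elementary route than the paper.

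Both arguments begin with the same reduction, $k[\mathcal M_1\sqcup\mathcal M_2]=k[\mathcal F(\mathcal M_1)\sqcup\mathcal F(\mathcal M_2)]$. The paper then simply invokes Lemma~\ref{lem:left-adjoint}: the linearisation is a left adjoint, so it preserves coproducts, and the disjoint union is the coproduct in $\cat{set_{par}}$. You instead write down the canonical basis isomorphism at each level and check naturality directly by the case split on $\mathrm{Dom}$.

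The abstract argument is shorter, but it rests on assumptions the paper does not spell out:
\begin{itemize}
\item The adjunction of Lemma~\ref{lem:left-adjoint} only holds if the forgetful functor sends $V$ to $V\setminus\{0\}$. With the naive underlying set, a partial function sending $x$ to $0$ and one undefined at $x$ give the same linear map, so the hom-set bijection fails.
\item For infinite $k$, that set is not finite, so the functor leaves the paper's category of finite sets.
\item One must also use that coproducts in $[\mathcal P_\tau,\cat{set_{par}}]$ are computed pointwise, so that postcomposing with $k[-]$ preserves them as zigzag modules and not just levelwise.
\end{itemize}
Your explicit verification avoids all of this and makes the compatibility with the structure maps visible. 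That compatibility is exactly the point you flagged as the only delicate one.

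One small nit: your levelwise isomorphisms assemble into an isomorphism in $\cat{Zig}_\tau$ simply because a natural transformation with invertible components has natural inverse components. The remark about kernels and cokernels being pointwise is not needed.
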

\begin{proof}
By the definition of the disjoint union of merge graphs,
\(
\mathcal M_1\sqcup \mathcal M_2
=
\mathcal G\bigl(\mathcal F(\mathcal M_1)\sqcup \mathcal F(\mathcal M_2)\bigr),
\)
and hence
\(
k[\mathcal M_1\sqcup \mathcal M_2]
=
k\bigl[\mathcal F(\mathcal M_1)\sqcup \mathcal F(\mathcal M_2)\bigr].
\)
Since the linearisation functor
\(
k[-]:\cat{set_{par}}\to\cat{vect_k}
\)
is left adjoint to the forgetful functor, it preserves coproducts. Therefore
\[
k\bigl[\mathcal F(\mathcal M_1)\sqcup \mathcal F(\mathcal M_2)\bigr]
\cong
k[\mathcal F(\mathcal M_1)] \oplus k[\mathcal F(\mathcal M_2)],
\]
where $\oplus$ is the coproduct in $\cat{vect_k}$. By definition of the linearisation of a merge graph, this is
\[
k[\mathcal M_1\sqcup \mathcal M_2]
\cong
k[\mathcal M_1]\oplus k[\mathcal M_2],
\]
as claimed.
\end{proof}

We now introduce the special type of merge graphs that induce an interval decomposition. We start by identifying distinguished connected components.

\begin{definition}
A connected component $\mathcal B$ of a merge graph $\mathcal M$ is called a \emph{bar component} if it has no merging or splitting vertices.
\end{definition}

\begin{remark}
Let $\mathcal B$ be a bar component of $\mathcal M$. Since $\mathcal B$ is connected and contains no merging or splitting vertices, there exist unique vertices with minimal level $b$ and maximal level $d$. We will therefore write $\mathcal B = \mathcal B\langle b,d\rangle$. 
\end{remark}

\begin{definition}
A merge graph $\mathcal M$ over $\tau$ is a \emph{barcode graph} if it is the disjoint union of bar components
$$\mathcal M = \bigsqcup_i \mathcal B\langle b_i, d_i \rangle.$$
\end{definition}

\begin{remark}
Equivalently, barcode graphs are merge graphs without splitting or merging vertices.

\begin{center}
\begin{tikzpicture}[yscale=-1, scale=1.5, swatch/.style={draw=none, fill=#1, rectangle, minimum size=2.5mm, inner sep=0pt},f_arrow/.style={
    base_style,
    postaction={
      decorate,
      decoration={
        markings,
        mark=at position 0.55 with {\arrow[scale=1.5]{stealth}}
      }
    }
  },]
\def\xscale{1pt}
\def\yscale{1pt}
\def\nodesize{1.5pt}
\def\specialsize{2pt}

\def\ytop{1}

\def\yarrow{0.54}
\def\ylinebot{0.7}
\def\ylinetop{2.5}
\def\ylabel{0.42}

\def\xsep{1.5}

\foreach \x in {0,...,3} {
    \draw[dashed] ({\xsep*\x},\ylinebot) -- ({\xsep*\x},\ylinetop);
    \node at ({\xsep*\x},\ylabel) { $\overset{\fpeval{1+\x}}{\bullet}$};
}

\draw[<-, >=stealth, shorten >=4pt, shorten <=4pt] (0,\yarrow) -- ({\xsep*1},\yarrow);
\draw[->, >=stealth, shorten >=4pt, shorten <=4pt] ({\xsep*1},\yarrow) -- ({\xsep*2},\yarrow);
\draw[->, >=stealth, shorten >=4pt, shorten <=4pt] ({\xsep*2},\yarrow) -- ({\xsep*3},\yarrow);

\pgfkeys{/mynode,
    xs=\xscale, 
    ys=\yscale, 
    isep=2*\nodesize,
    fill=white
  }

\mynode{name=a_1_1, fill=white, x=0, y=1, xs=\xscale, ys=\yscale, isep=\nodesize}

\mynode{name=b_1_1, fill=white, x={\xsep*1}, y=1, xs=\xscale, ys=\yscale, isep=\nodesize}
\mynode{name=b_1_2, fill=white, x={\xsep*1}, y=1.5, xs=\xscale, ys=\yscale, isep=\nodesize}
\mynode{name=b_1_3, fill=white, x={\xsep*1}, y=2, xs=\xscale, ys=\yscale, isep=\nodesize}

\mynode{name=c_1_1, fill=white, x={\xsep*2}, y=1, xs=\xscale, ys=\yscale, isep=\nodesize}
\mynode{name=c_1_2, fill=white, x={\xsep*2}, y=1.5, xs=\xscale, ys=\yscale, isep=\nodesize}
\mynode{name=c_1_3, fill=white, x={\xsep*2}, y=2, xs=\xscale, ys=\yscale, isep=\nodesize}

\mynode{name=d_1_1, fill=white, x={\xsep*3}, y=1, xs=\xscale, ys=\yscale, isep=\nodesize}
\mynode{name=d_1_3, fill=white, x={\xsep*3}, y=2, xs=\xscale, ys=\yscale, isep=\nodesize}

\draw[f_arrow] (b_1_1) to (a_1_1);
\draw[f_arrow] (b_1_1) to (c_1_1);
\draw[f_arrow] (c_1_1) to (d_1_1);

\draw[f_arrow] (b_1_2) to (c_1_2);

\draw[f_arrow] (b_1_3) to (c_1_3);
\draw[f_arrow] (c_1_3) to (d_1_3);









\end{tikzpicture}
\end{center}
\end{remark}

\vspace{0.5cm}

\begin{proposition}\label{prop:lin-barcode}
Let $\mathcal M=\bigsqcup_i \mathcal B\langle b_i, d_i \rangle$ be a barcode graph over $\tau$. Then there exists an isomorphism
$$k[\mathcal M] = k\left[\bigsqcup_i \mathcal B\langle b_i, d_i \rangle\right]\cong \bigoplus_i \mathbb I_\tau \langle b_i, d_i \rangle. $$
\end{proposition}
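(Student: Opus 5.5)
By Proposition~\ref{prop:com-direct-sum}, applied inductively over the finitely many bar components, it suffices to prove that for a single bar component $\mathcal B=\mathcal B\langle b,d\rangle$ there is an isomorphism $k[\mathcal B]\cong \mathbb I_\tau\langle b,d\rangle$. Summing these isomorphisms over $i$ then gives the claim.

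So fix a bar component $\mathcal B$. The first step is to show that each fibre $B_j=\lvl^{-1}(j)\cap\mathcal B$ has exactly one element for $b\le j\le d$ and is empty otherwise. I would argue this in three parts.

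\begin{itemize}
\item \emph{The component is a path.} Every edge joins adjacent levels. A vertex with no merging or splitting has at most one neighbour at level $i-1$ and at most one at level $i+1$, so its undirected degree is at most $2$, with at most one neighbour on each side.
\item \emph{The path is monotone in level.} Walk from a vertex of minimal level. Each step must go to the next level up, since going back down would require some vertex to have two neighbours on the same side. Hence the walk is a simple path climbing one level per step.
\item \emph{Conclusion.} The vertex set is $\{v_b,\dots,v_d\}$ with $\lvl(v_j)=j$, and consecutive vertices are joined by exactly one edge.
\end{itemize}

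This is the step needing care, since it is exactly the content of the remark that the levels $b$ and $d$ are well defined. I expect it to be the main (if mild) obstacle: one must rule out a vertex whose two neighbours lie on the same side, which is exactly what the absence of merge and split vertices forbids.

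Next I compare the structure maps. Take an edge $e\colon j\to j'$ of $\tau$ with $j'=j\pm1$.
\begin{itemize}
\item If both $j,j'\in[b,d]$, the unique edge of $\mathcal B$ between $v_j$ and $v_{j'}$ lies over $e$. This is because $\lvl$ is a morphism of directed graphs, so the edge is oriented as $e$. Hence the partial map $f_e$ sends $v_j\mapsto v_{j'}$, and $k[f_e]$ sends the basis vector $e_{v_j}$ to $e_{v_{j'}}$.
\item If either endpoint lies outside $[b,d]$, then the source or the target space is $0$, so the map is zero.
\end{itemize}

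Finally, define $\phi_j\colon k[B_j]\to I_\tau\langle b,d\rangle_j$ by $e_{v_j}\mapsto 1$ for $j\in[b,d]$ and as the zero map otherwise. Each square commutes: inside the interval both horizontal maps are identities under these bases, and outside it all relevant spaces are zero. Each $\phi_j$ is invertible, so $\phi$ is an isomorphism of zigzag modules $k[\mathcal B]\cong\mathbb I_\tau\langle b,d\rangle$, which completes the proof.
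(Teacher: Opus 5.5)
Your proof is correct and takes the same route as the paper: reduce to a single bar component via Proposition~\ref{prop:com-direct-sum}, then identify $k[\mathcal B\langle b,d\rangle]$ with $\mathbb I_\tau\langle b,d\rangle$ level by level. The paper simply asserts that a bar component has exactly one vertex at each level of $[b,d]$; your argument that the component is a path climbing one level per step, together with the explicit check of the commuting squares, supplies the justification the paper leaves implicit.
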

\begin{proof}
By Proposition~\ref{prop:com-direct-sum}, we have that $$ k\left[\bigsqcup_i \mathcal B\langle b_i, d_i \rangle\right] \cong \bigoplus_i k[\mathcal B\langle b_i, d_i \rangle]. $$
It therefore suffices to show that for each bar component $\mathcal B\langle b_i, d_i \rangle$, there is an isomorphism
\[
k[\mathcal B\langle b_i, d_i \rangle] \cong \mathbb I_\tau \langle b_i, d_i \rangle.
\]
But $\mathcal B\langle b_i, d_i \rangle$ consists of a single vertex at each level $\ell \in \{b_i,\dots,d_i\}$, and no vertices otherwise. Hence, its linearisation assigns the one-dimensional vector space $k$ at each level $\ell \in \{b_i,\dots,d_i\}$ and $0$ elsewhere, with identity maps along the interval and zero maps outside. This is precisely the interval module $\mathbb I_\tau \langle b_i, d_i \rangle$. Combining these isomorphisms yields the claim.
\end{proof}

This proposition shows that barcode graphs provide a combinatorial model for interval-decomposed zigzag modules: the interval decomposition can be read off directly from the graph. We are missing one last ingredient to link back to zigzag modules.

Let $\M$ be a merge graph and $\V$ be a zigzag module over $\tau$. Suppose there exists an isomorphism $\psi : k[\mathcal M] \xrightarrow{\sim} \mathbb V$. Then we have that the set
\[
\{ \psi(e_v) \mid v \in M_i \}
\]
forms a basis of $\mathbb V(i)$ for each level $i\in\tau$, where $(e_v)_{v\in M_i}$ denotes the canonical basis of $k[M_i]$. This motivates the following definition.

\begin{definition}\label{def:realising-basis}
Let $\mathbb V$ be a zigzag module over $\tau$. A merge graph $\mathcal M$ over $\tau$ is said to \emph{realise a basis} of $\mathbb V$ if there is an isomorphism
\[
k[\mathcal M] \cong \mathbb V.
\]
\end{definition}

\begin{example}
Given a topological diagram $\mathbb X$, the merge graph encoding the evolution of its connected components $\mathcal G(\pi_0(\mathbb X))$ realises a basis of $H_0(\mathbb{X})$. This follows directly from the fact that $H_0 = k[-]\circ \pi_0$.
\end{example}

\begin{proposition}\label{prop:barcode-from-graph}
If a barcode graph $\mathcal M = \bigsqcup_i \mathcal B\langle b_i, d_i \rangle$  over $\tau$ realises a basis of a zigzag module $\V$, we have that
$$\mathrm{Bar}(\V) = \{\langle  b_i , d_i\rangle \}_i. $$ 
\end{proposition}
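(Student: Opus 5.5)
The plan is to chain together three earlier results: Definition~\ref{def:realising-basis}, Proposition~\ref{prop:lin-barcode}, and the uniqueness statement Theorem~\ref{thm:unique-decomp}.

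First, by hypothesis $\mathcal M$ realises a basis of $\V$. By Definition~\ref{def:realising-basis}, this gives an isomorphism of zigzag modules $\phi : k[\mathcal M] \xrightarrow{\sim} \V$. Second, since $\mathcal M = \bigsqcup_i \mathcal B\langle b_i, d_i\rangle$ is a barcode graph, Proposition~\ref{prop:lin-barcode} gives an isomorphism
\[
k[\mathcal M] \cong \bigoplus_i \mathbb I_\tau\langle b_i, d_i\rangle .
\]
Composing the two yields $\V \cong \bigoplus_i \mathbb I_\tau\langle b_i,d_i\rangle$.

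The remaining step is to check that this particular decomposition computes $\mathrm{Bar}(\V)$. The barcode was defined through an arbitrary interval decomposition, so well-definedness comes from Theorem~\ref{thm:unique-decomp}. Transporting the summands $\mathbb I_\tau\langle b_i,d_i\rangle$ along the isomorphism exhibits them as submodules $\mathbb U_i\subseteq \V$ with $\V=\bigoplus_i \mathbb U_i$ and $\mathbb U_i\cong \mathbb I_\tau\langle b_i,d_i\rangle$. Compare this with any other interval decomposition $\V = \bigoplus_j \mathbb W_j$ used to define $\mathrm{Bar}(\V)$. Since interval modules are indecomposable, Theorem~\ref{thm:unique-decomp} supplies a bijection matching each $\mathbb U_i$ with an isomorphic $\mathbb W_{\sigma(i)}$.

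Finally, two interval modules $\mathbb I_\tau\langle b,d\rangle$ and $\mathbb I_\tau\langle b',d'\rangle$ are isomorphic only if $b=b'$ and $d=d'$, because their supports $\{i : I_i \neq 0\}$ must coincide. Hence the multisets of endpoint pairs agree, counted with multiplicity, and $\mathrm{Bar}(\V) = \{\langle b_i,d_i\rangle\}_i$.

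The only mildly delicate point is the bookkeeping in this last step: the uniqueness theorem is phrased for internal direct-sum decompositions of a fixed module, so the external direct sum must be moved into $\V$ via the isomorphism before it applies. Once that is done, the argument is routine.
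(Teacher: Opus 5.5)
Your proof is correct and follows the paper's route. The paper simply composes the isomorphism $k[\mathcal M]\cong\V$ from Definition~\ref{def:realising-basis} with the isomorphism $k[\mathcal M]\cong\bigoplus_i \mathbb I_\tau\langle b_i,d_i\rangle$ of Proposition~\ref{prop:lin-barcode}. Your extra appeal to Theorem~\ref{thm:unique-decomp}, and to the fact that isomorphic interval modules have equal supports, only makes explicit the well-definedness of $\mathrm{Bar}(\V)$, which the paper leaves implicit in its definition of the barcode.
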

\begin{proof}
This is an immediate consequence of Definition~\ref{def:realising-basis} together with Proposition~\ref{prop:lin-barcode}.
\end{proof}

\subsection{From Frames to Formigrams}\label{sec:frames-to-formi}

Before we move on to show how merge graphs aid the interval decomposition of $H_0$-zigzag modules in Section~\ref{sec:int-decomp}, we first present an efficient algorithm for constructing merge graphs in the context of binary videos. Therefore, let $\mathcal{V}=(I_i)_{i=1}^n$ be a binary video of resolution \((w,h)\), with common foreground connectivity \(\kappa\in\{4,8\}\). Recall that Definition~\htest{def:top-constr} associates to \(\mathcal{V}\) the topological diagrams $
\mathbb{F}^\cup_{\mathcal{V}},\
\mathbb{F}^\cap_{\mathcal{V}},\
\mathbb{B}^\cup_{\mathcal{V}},\
$
and 
$
\mathbb{B}^\cap_{\mathcal{V}}
$
via insertion of interpolation images \(I_i\vee I_{i+1}\) and \(I_i\wedge I_{i+1}\).


We describe an algorithm that constructs the merge graph $\mathcal{G}(\pi_0(\mathbb{F}^\cup_{\mathcal{V}}))$ of the union construction, but it can be easily adapted to the other three topological diagrams. The runtime is linear $\mathcal{O}(whn)$ in the total number of pixels and \textit{embarrassingly parallel} across the frames of the video, meaning that each image can be processed independently, allowing for straightforward parallelisation. The algorithm proceeds in three steps.

\noindent\textbf{Step 1 (Construction of Interpolation Frames).} In a first step, we build the interpolation frames $I_i \vee I_{i+1}$ via a pointwise maximum. Since this is a pixelwise operation, all such images can be computed in time $\mathcal{O}(nwh)$. This leads to the topological diagram $\mathbb{F}^\cup_{\mathcal{V}}$. For $k\in \{1,\dots, 2n-1 \}$, we have
\[
\mathbb{F}^\cup_{\mathcal{V}}(k) = \begin{cases}
    F_{I_i}, & k=2i-1,\\
    F_{I_i\vee I_{i+1}}, & k=2i.
\end{cases}
\]

\noindent\textbf{Step 2 (Vertex Set).} The second step builds the vertex sets of $\mathcal{G}(\pi_0(\mathbb{F}^\cup_{\mathcal{V}}))$. For $k\in \{1,\dots, 2n-1 \}$, the vertex set at $k$ is in bijection to $\pi_0(\mathbb{F}^\cup_{\mathcal{V}}(k))$. By reducing to local adjacency on the pixel grid $\Omega_{w,h}=\{1,\dots,w\} \times \{1,\dots, h \}$, we can compute the connected components $\pi_0(\mathbb{F}^\cup_{\mathcal{V}}(k))$ without constructing the cubical complexes.

\begin{definition}
Two pixels $p=(i,j)$ and $p'=(i',j')$ in $\Omega_{w,h}$ are \emph{4-adjacent} if $|i-i'|+|j-j'|=1$, and \emph{8-adjacent} if $\max\{|i-i'|, |j-j'|\}=1$. For $\kappa\in \{4,8 \}$, we denote by $N_\kappa(p)$ the set of all pixels that are $\kappa$-adjacent to $p$.
\end{definition}

\begin{definition}
Let $I:\Omega_{w,h}\to\{0,1\}$ be a binary image with foreground connectivity $\kappa\in\{4,8\}$. Two pixels $p,q\in I^{-1}(1)$ are \emph{$\kappa$-connected} if they can be joined by a chain of $\kappa$-adjacent pixels in $I^{-1}(1)$. The equivalence classes are called the \emph{$\kappa$-connected components} of $I^{-1}(1)$ and denoted by $\mathcal{C}_\kappa(I)$.
\end{definition}

\begin{lemma}
There is a bijection
\[
\mathcal{C}_\kappa(I) \cong \pi_0(F_I).
\]
\end{lemma}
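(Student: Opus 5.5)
We prove the bijection $\mathcal{C}_\kappa(I)\cong\pi_0(F_I)$ by treating the two cases of Definition~\ref{def:top-real} separately. In each case we define a map from foreground pixels to path components of $F_I$ and show that it is well defined on $\kappa$-components, surjective, and injective. For $\kappa=4$ we have $F_I=|V_I(1)|\subseteq\mathbb{R}^2$. The vertices of $V_I(1)$ are exactly the pixels $(i,j)\in I^{-1}(1)$, and we send a pixel $p$ to the path component of $F_I$ containing the point $p$. For $\kappa=8$ we have $F_I=|T_I(1)|$, and we send $p$ to the path component containing the square $Q_p$; this is legitimate because $Q_p$ is path connected.

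\textbf{Well-definedness.} It suffices to check that $\kappa$-adjacent foreground pixels land in the same component. If $\kappa=4$ and $p,q$ are 4-adjacent foreground pixels, then the unit segment joining them is an edge of $K^*$ whose vertices both lie in $I^{-1}(1)$, so it belongs to $V_I(1)$. If $\kappa=8$ and $p,q$ are 8-adjacent, then the closed squares $Q_p$ and $Q_q$ share either an edge or a vertex, so their union is path connected. By induction along a chain of $\kappa$-adjacent pixels, the map descends to $\mathcal{C}_\kappa(I)\to\pi_0(F_I)$.

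\textbf{Surjectivity.} Every point of $F_I$ lies in some cell of the subcomplex. In the $T$-case every cell lies in the closure of some $Q_p$ with $I(p)=1$, and $Q_p$ is connected to any of its points. In the $V$-case every cell of $V_I(1)$ is convex and has a vertex in $I^{-1}(1)$, so each point can be joined by a straight segment inside that cell to a vertex $p$.

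\textbf{Injectivity.} This is the main step. We use that path components of a finite cubical complex coincide with the connected components of its 1-skeleton graph, since each cell is path connected and has vertices. Given a path in $F_I$ between the images of $p$ and $q$, we get a chain of vertices joined by edges of the subcomplex. In the $V$-case, edges of $V_I(1)$ join 4-adjacent foreground pixels, so the chain is already a 4-path. In the $T$-case, consecutive cells along the path come from squares $Q_p, Q_{p'}$ sharing a face (vertex or edge). We therefore work at the level of top cells: two squares of $T_I(1)$ meet exactly when their pixels are 8-adjacent, and a connected union of closed squares has the property that the intersection graph of those squares is connected. The obstacle is to justify that last point rigorously. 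Our approach is to partition the foreground pixels by $\kappa$-component; the unions of the corresponding closed squares are then pairwise disjoint closed sets, because non-adjacent pixels give disjoint squares. A finite union of pairwise disjoint closed sets that is path connected must lie in a single one of them. The same disjoint-closed-sets argument handles the $V$-case, using the subcomplexes spanned by each 4-component, which are pairwise disjoint since no edge of $V_I(1)$ joins different 4-components.
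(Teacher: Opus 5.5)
Your proof is correct and uses the same idea as the paper, which simply observes that cells of the $T$- and $V$-constructions intersect exactly when the corresponding pixels are $\kappa$-adjacent. Your well-definedness step (adjacent pixels give meeting cells) and your injectivity step (distinct $\kappa$-components give pairwise disjoint closed pieces covering $F_I$) make that one-line argument precise. The detour through the 1-skeleton is unnecessary, because the disjoint-closed-sets argument already settles both cases. In the $V$-case, the disjointness is best justified as follows: cells of $K^*$ meet only in common faces, which contain vertices, and each cell of $V_I(1)$ has all its vertices in a single 4-component.
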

\begin{proof}
This follows directly from the definitions of the $T$- and $V$-constructions, since intersections of cells correspond precisely to $\kappa$-adjacency.
\end{proof}

In practice, the set $\mathcal{C}_\kappa(I)$ is computed via a standard connected component labelling (CCL) algorithm~\cite{rosenfeldSequentialOperationsDigital1966,heConnectedcomponentLabelingProblem2017}. Given a binary image $I$ with foreground connectivity $\kappa_I$, the output of CCL is two-fold. Let $m_I=|\mathcal{C}_{\kappa_I}(I)|$. The first output is a labelling map
\[
L_{I}:\Omega_{w,h}\to\mathbb{N}_0,
\]
which assigns to each foreground pixel $p$ an index $L_{I}(p)\in\{ 1,\dots, m_I\}$ of its $\kappa_I$-connected component, and $0$ to background pixels. The collection of sets $L_{I}^{-1}(j)$ with $j\in\{1,\dots, m_I \}$ recovers the equivalence classes $\mathcal{C}_{\kappa_I}(I)$. The second output is a collection of representative pixels 
\[
P_{I}=\{p_1,\dots,p_{m_I}\}
\]
with $p_j\in\Omega_{w,h}$ such that $L_{I}(p_j)=j$ for each $j\in\{1,\dots,m_I\}$. This records a single pixel per connected component.

We describe a straightforward implementation based on depth-first search in Algorithm~\htest{algo:CCL} and provide an Example in Figure~\htest{fig:CCL}.

\begin{figure}[h] 
\centering
\captionsetup[subfigure]{justification=centering}
\begin{subfigure}{0.32\textwidth}
\centering
\[
\left[
\vphantom{\scalebox{1.085}{$\begin{array}{c}0\\0\\0\\0\\0\\0\\0\end{array}$}}
\begin{array}{ccccccc}
0 & 1 & 1 & 0 & 0 & 1 & 1 \\
0 & 0 & 0 & 1 & 1 & 1 & 1 \\
1 & 0 & 1 & 1 & 0 & 0 & 0 \\
1 & 0 & 0 & 0 & 0 & 1 & 1 \\
0 & 1 & 1 & 0 & 1 & 1 & 0 \\
0 & 1 & 1 & 1 & 1 & 0 & 1 \\
0 & 0 & 0 & 1 & 0 & 1 & 1
\end{array}
\right]
\]
\caption{Binary image $I$}
\end{subfigure}
\hfill
\begin{subfigure}{0.32\textwidth}
\centering
\[
\left[
\vphantom{\scalebox{1.085}{$\begin{array}{c}0\\0\\0\\0\\0\\0\\0\end{array}$}}
\begin{array}{ccccccc}
0 & \cellcolor{red!30}1 & \cellcolor{red!30}1 & 0 & 0 & \cellcolor{red!30}1 & \cellcolor{red!30}1 \\
0 & 0 & 0 & \cellcolor{red!30}1 & \cellcolor{red!30}1 & \cellcolor{red!30}1 & \cellcolor{red!30}1 \\
\cellcolor{blue!30}2 & 0 & \cellcolor{red!30}1 & \cellcolor{red!30}1 & 0 & 0 & 0 \\
\cellcolor{blue!30}2 & 0 & 0 & 0 & 0 & \cellcolor{blue!30}2 & \cellcolor{blue!30}2 \\
0 & \cellcolor{blue!30}2 & \cellcolor{blue!30}2 & 0 & \cellcolor{blue!30}2 & \cellcolor{blue!30}2 & 0 \\
0 & \cellcolor{blue!30}2 & \cellcolor{blue!30}2 & \cellcolor{blue!30}2 & \cellcolor{blue!30}2 & 0 & \cellcolor{blue!30}2 \\
0 & 0 & 0 & \cellcolor{blue!30}2 & 0 & \cellcolor{blue!30}2 & \cellcolor{blue!30}2
\end{array}
\right]
\]
\caption{$L_{I}$ with $\kappa_I=8$}
\end{subfigure}
\hfill
\begin{subfigure}{0.32\textwidth}
\centering
\[
\left[
\vphantom{\scalebox{1.085}{$\begin{array}{c}0\\0\\0\\0\\0\\0\\0\end{array}$}}
\begin{array}{ccccccc}
0 & \cellcolor{red!30}1 & \cellcolor{red!30}1 & 0 & 0 & \cellcolor{blue!30}2 & \cellcolor{blue!30}2 \\
0 & 0 & 0 & \cellcolor{blue!30}2 & \cellcolor{blue!30}2 & \cellcolor{blue!30}2 & \cellcolor{blue!30}2 \\
\cellcolor{green!30}3 & 0 & \cellcolor{blue!30}2 & \cellcolor{blue!30}2 & 0 & 0 & 0 \\
\cellcolor{green!30}3 & 0 & 0 & 0 & 0 & \cellcolor{purple!30}4 & \cellcolor{purple!30}4 \\
0 & \cellcolor{purple!30}4 & \cellcolor{purple!30}4 & 0 & \cellcolor{purple!30}4 & \cellcolor{purple!30}4 & 0 \\
0 & \cellcolor{purple!30}4 & \cellcolor{purple!30}4 & \cellcolor{purple!30}4 & \cellcolor{purple!30}4 & 0 & \cellcolor{orange!30}5 \\
0 & 0 & 0 & \cellcolor{purple!30}4 & 0 & \cellcolor{orange!30}5 & \cellcolor{orange!30}5
\end{array}
\right]
\]
\caption{$L_{I}$ with $\kappa_I=4$}
\end{subfigure}
\caption{\textbf{Connected component labeling (CCL)} of a binary image $I$. The two choices $\kappa\in\{4,8\}$ lead to distinct $\kappa$-connected components $\mathcal{C}_\kappa(I)$.}\label{fig:CCL}
\end{figure}

\noindent The vertex set at level $k\in\{1,\dots,2n-1\}$, i.e.\ the fibre over $k$ of $\mathcal{G}(\pi_0(\mathbb{F}^\cup_{\mathcal{V}}))$, is then simply given by the labels
\[
\lvl^{-1}(k) = 
\begin{cases}
\{1,\dots,m_{I_i}\}, & k=2i-1,\\
\{1,\dots,m_{I_i\vee I_{i+1}}\}, & k=2i.
\end{cases}
\]
where again $m_I \coloneq |\mathcal{C}_{\kappa_I}(I)|$.

\noindent\textbf{Step 3 (Edge Set).} It remains to determine the edges via the induced maps
\[
\pi_0(F_{I_i}) \to \pi_0(F_{I_i\vee I_{i+1}}) \leftarrow \pi_0(F_{I_{i+1}}).
\]
By Lemma~\ref{lem:inter-uni}, these maps are induced by inclusions. Hence each connected component of $I_i$ or $I_{i+1}$ is contained in a unique connected component of $I_i \vee I_{i+1}$, so it suffices to track a single representative pixel per component.

Let $P_{I_i}=\{p_1,\dots,p_{m_{I_i}}\}$ and $P_{I_{i+1}}=\{q_1,\dots,q_{m_{I_{i+1}}}\}$ be the representative pixels returned by CCL. The induced maps are computed by evaluating the labelling of the union image. Accordingly, the edge sets between level $2i-1$ and $2i$ are given by
\[
E_{2i-1,\,2i}
=
\{\, (j,\,L_{I_i\vee I_{i+1}}(p_j)) \mid j=1,\dots,m_{I_i} \,\},
\]
and
\[
E_{2i+1,\,2i}
=
\{\, (j,\,L_{I_i\vee I_{i+1}}(q_j)) \mid j=1,\dots,m_{I_{i+1}} \,\}.
\]
These define the directed edges between the vertex sets at levels $2i-1$, $2i$, and $2i+1$. Since each representative pixel lies in the foreground of $I_i\vee I_{i+1}$ due to the aforementioned inclusions, the labels are nonzero, and the edges are well-defined.
Repeating this for each $i\in\{1,\dots,n-1\}$ yields the merge graph. The full algorithm is given in Algorithm~\ref{algo:formi2}.

\noindent\textbf{Complexity.}
For a binary video $\mathcal{V}=(I_i)_{i=1}^n$ of resolution $(w,h)$, Algorithm~\htest{algo:formi2} runs in $\mathcal{O}(nwh)$ time. Indeed, Step~1 computes the interpolation frames in $\mathcal{O}(nwh)$ time. Step~2 applies CCL to $2n-1$ images, each in linear time, yielding $\mathcal{O}(nwh)$. Step~3 processes each connected component once, which is bounded by the number of pixels, and hence also $\mathcal{O}(nwh)$. The overall complexity is therefore linear in the total number of pixels.

 Moreover, the runtime is dominated by the CCL step, which is \emph{embarrassingly parallel} across images, meaning that each image can be processed independently. This enables efficient parallel implementations; we explore the resulting speed-up in Section~\ref{sec:benchmarking}.

\begin{center}
\begin{tikzpicture}
\hspace{-1cm}
\def\dx{2.17}
\foreach \i in {1,...,7} {
\node at ({\dx*(\i-1)},0) {\includegraphics[width=1.7cm]{images/colored_circles/circle_\i.png}};
}

\foreach \i in {1,...,7} {
    \draw[dotted, gray] ({\dx*(\i-1)}, -1.8) -- ({\dx*(\i-1)}, -5.6);
}

\foreach \i/\label in {
    1/$I_1$,
    2/$I_1 \vee I_2$,
    3/$I_2$,
    4/$I_2 \vee I_3$,
    5/$I_3$,
    6/$I_3 \vee I_4$,
    7/$I_4$
} {
    \node at ({\dx*(\i-1)}, 1.2) {\label};
}

\foreach \i/\label in {
    1/$F_{I_1}$,
    2/$F_{I_1 \vee I_2}$,
    3/$F_{I_2}$,
    4/$F_{I_2 \vee I_3}$,
    5/$F_{I_3}$,
    6/$F_{I_3 \vee I_4}$,
    7/$F_{I_4}$
} {
    \node (F\i) at ({\dx*(\i-1)}, -1.3) {\label};
}

\draw[{Hooks[right]}->] (F1) -- (F2);
\draw[{Hooks[left]}->] (F3) -- (F2);

\draw[{Hooks[right]}->] (F3) -- (F4);
\draw[{Hooks[left]}->] (F5) -- (F4);

\draw[{Hooks[right]}->] (F5) -- (F6);
\draw[{Hooks[left]}->] (F7) -- (F6);

\node at (-1.5,-1.3) {$\mathbb{F}^\cup_\mathcal{V}$:};
\node at (-1.5,0) {$\mathcal{V}$:};
\node at (-1.5,-3.5) {$\pi_0(\mathbb{F}^\cup_\mathcal{V})$:};

\def\xscale{1pt}
\def\yscale{1pt}
\def\nodesize{1.5pt}
\def\specialsize{3pt}

\def\ytop{1}

\mynode{name=a_1_1, fill=my_violet, x=0, y=-2.5, xs=\xscale, ys=\yscale, isep=\specialsize}
\mynode{name=a_1_2, fill=my_teal, x=0, y=-5, xs=\xscale, ys=\yscale, isep=\specialsize}

\mynode{name=b_1_1, fill=my_teal, x={1*\dx}, y=-3.75, xs=\xscale, ys=\yscale, isep=\specialsize}

\mynode{name=c_1_1, fill=my_teal, x={2*\dx}, y=-3.75, xs=\xscale, ys=\yscale, isep=\specialsize}

\mynode{name=d_1_1, fill=my_teal, x={3*\dx}, y=-3.75, xs=\xscale, ys=\yscale, isep=\specialsize}

\mynode{name=e_1_1, fill=my_actual_red, x={4*\dx}, y=-2.5, xs=\xscale, ys=\yscale, isep=\specialsize}
\mynode{name=e_1_2, fill=my_teal, x={4*\dx}, y=-3.75, xs=\xscale, ys=\yscale, isep=\specialsize}
\mynode{name=e_1_3, fill=my_mustard, x={4*\dx}, y=-5, xs=\xscale, ys=\yscale, isep=\specialsize}

\mynode{name=f_1_1, fill=my_teal, x={5*\dx}, y=-2.5, xs=\xscale, ys=\yscale, isep=\specialsize}
\mynode{name=f_1_2, fill=my_mustard, x={5*\dx}, y=-5, xs=\xscale, ys=\yscale, isep=\specialsize}

\mynode{name=g_1_1, fill=my_teal, x={6*\dx}, y=-2.5, xs=\xscale, ys=\yscale, isep=\specialsize}
\mynode{name=g_1_2, fill=my_mustard, x={6*\dx}, y=-5, xs=\xscale, ys=\yscale, isep=\specialsize}

\draw[b_arrow] (b_1_1) to (a_1_1);
\draw[b_arrow] (b_1_1) to (a_1_2);

\draw[b_arrow] (b_1_1) to (c_1_1);

\draw[b_arrow] (d_1_1) to (c_1_1);

\draw[b_arrow] (d_1_1) to (e_1_1);
\draw[b_arrow] (d_1_1) to (e_1_2);
\draw[b_arrow] (d_1_1) to (e_1_3);

\draw[b_arrow] (f_1_1) to (e_1_1);
\draw[b_arrow] (f_1_1) to (e_1_2);
\draw[b_arrow] (f_1_2) to (e_1_3);

\draw[b_arrow] (f_1_1) to (g_1_1);
\draw[b_arrow] (f_1_2) to (g_1_2);
  
\end{tikzpicture}
\captionof{figure}{\textbf{Merge graph obtained from a binary video $\mathcal{V}$}: Interpolation frames $I_i\vee I_{i+1} = \max(I_i,I_{i+1})$ are inserted, inducing the topological diagram $\mathbb{F}^\cup_\mathcal{V}$. Vertices correspond to connected components; edges encode their evolution under inclusion.} \label{fig:formi-union}
\end{center}

\section{Interval Decomposition Algorithm} \label{sec:int-decomp}

Given a topological diagram $\mathbb X:\mathcal P_\tau \to \cat{top}$, we turn to the problem of efficiently computing the barcode of its associated $H_0$-zigzag module $$\mathrm{Bar}(H_0(\mathbb X;k))= \{ \langle  b_j, d_j \rangle\}_{j=1}^N.$$ Proposition~\ref{prop:barcode-from-graph} reduces the problem to a combinatorial one: namely, to construct a barcode graph $$\mathcal M^{\mathrm{bar}} = \bigsqcup_i \mathcal B \langle b_i , d_i \rangle $$ over $\tau$ realising a basis of $H_0(\mathbb X)$, in the sense that 
$$k[\mathcal M^{\mathrm{bar}}] \cong H_0(\mathbb X;k).$$
By Proposition~\ref{prop:barcode-from-graph}, the barcode of $H_0(\mathbb X;k)$ is then fully determined by the endpoints of the bar components of $\mathcal M^{\mathrm{bar}}$, and by Theorem~\ref{thm:coeff} independent of the chosen coefficient field $k$. To construct such a graph efficiently, we make use of the state-of-the-art algorithm of Dey and Hou~\cite{deyComputingZigzagPersistence2021}, which applies under a structural assumption on the underlying formigram $\pi_0(\mathbb X)$.

\begin{definition}
A formigram $\Sb : \mathcal P _\tau \to \cat{set}$ is called \emph{elementary} if $S_1=S_n=\emptyset$, and all structure maps between consecutive levels $S[i\to j]:S_i\to S_j$ are of the following forms:
\begin{itemize}
\item injective with $|S_j|= |S_i|+1$; 
\item surjective with $|S_i|= |S_j|+1$; 
\item bijective. 
\end{itemize}
\end{definition}

\begin{remark}
In particular, the merge graph $\mathcal G(\Sb)$ of an elementary formigram $\Sb$ has at most one birth or death event between consecutive levels, and all merging and splitting multiplicities are at most one.
\end{remark}

\begin{definition} \label{def:elem-zigzag-mod}
Let $\mathbb X$ be a topological diagram. We say that $H_0(\mathbb X)$ is an elementary zigzag module if $\pi_0(\mathbb X)$ is an elementary formigram.
\end{definition}


\begin{center}
\begin{tikzpicture}[yscale=-1, scale=1.5, swatch/.style={draw=none, fill=#1, rectangle, minimum size=2.5mm, inner sep=0pt},f_arrow/.style={
    base_style,
    postaction={
      decorate,
      decoration={
        markings,
        mark=at position 0.55 with {\arrow[scale=1.3]{stealth}}
      }
    }
  },]
\def\xscale{1pt}
\def\yscale{1pt}
\def\nodesize{1.5pt}
\def\specialsize{2pt}

\def\ytop{1}

\def\yarrow{0.54}
\def\ylinebot{0.7}
\def\ylinetop{2.75}
\def\ylabel{0.42}

\def\xsep{0.75}

\foreach \x in {0,...,6} {
    \draw[dashed] ({\xsep*\x},\ylinebot) -- ({\xsep*\x},\ylinetop);
    \node at ({\xsep*\x},\ylabel) { $\overset{\fpeval{1+\x}}{\bullet}$};
}

\draw[->, >=stealth, shorten >=4pt, shorten <=4pt] (0,\yarrow) -- ({\xsep*1},\yarrow);
\draw[<-, >=stealth, shorten >=4pt, shorten <=4pt] ({\xsep*1},\yarrow) -- ({\xsep*2},\yarrow);
\draw[<-, >=stealth, shorten >=4pt, shorten <=4pt] ({\xsep*2},\yarrow) -- ({\xsep*3},\yarrow);
\draw[<-, >=stealth, shorten >=4pt, shorten <=4pt] ({\xsep*3},\yarrow) -- ({\xsep*4},\yarrow);
\draw[->, >=stealth, shorten >=4pt, shorten <=4pt] ({\xsep*4},\yarrow) -- ({\xsep*5},\yarrow);
\draw[->, >=stealth, shorten >=4pt, shorten <=4pt] ({\xsep*5},\yarrow) -- ({\xsep*6},\yarrow);

\pgfkeys{/mynode,
    xs=\xscale, 
    ys=\yscale, 
    isep=2*\nodesize,
    fill=white
  }

\begin{scope}
\mynode{name=a_1, fill=white, x=0, y=1.5, xs=\xscale, ys=\yscale, isep=\nodesize}
\mynode{name=a_2, fill=white, x=0, y=2.5, xs=\xscale, ys=\yscale, isep=\nodesize}

\mynode{name=b_1, fill=white, x={\xsep*1}, y=2, xs=\xscale, ys=\yscale, isep=\nodesize}

\mynode{name=c_1, fill=white, x={\xsep*2}, y=2, xs=\xscale, ys=\yscale, isep=\nodesize}

\mynode{name=d_1, fill=white, x={\xsep*3}, y=1.5, xs=\xscale, ys=\yscale, isep=\nodesize}
\mynode{name=d_2, fill=white, x={\xsep*3}, y=2, xs=\xscale, ys=\yscale, isep=\nodesize}
\mynode{name=d_3, fill=white, x={\xsep*3}, y=2.5, xs=\xscale, ys=\yscale, isep=\nodesize}

\mynode{name=e_1, fill=white, x={\xsep*4}, y=1, xs=\xscale, ys=\yscale, isep=\nodesize}
\mynode{name=e_2, fill=white, x={\xsep*4}, y=1.5, xs=\xscale, ys=\yscale, isep=\nodesize}
\mynode{name=e_3, fill=white, x={\xsep*4}, y=2, xs=\xscale, ys=\yscale, isep=\nodesize}
\mynode{name=e_4, fill=white, x={\xsep*4}, y=2.5, xs=\xscale, ys=\yscale, isep=\nodesize}

\mynode{name=f_1, fill=white, x={\xsep*5}, y=1, xs=\xscale, ys=\yscale, isep=\nodesize}
\mynode{name=f_2, fill=white, x={\xsep*5}, y=2, xs=\xscale, ys=\yscale, isep=\nodesize}
\mynode{name=f_3, fill=white, x={\xsep*5}, y=2.5, xs=\xscale, ys=\yscale, isep=\nodesize}

\mynode{name=g_1, fill=white, x={\xsep*6}, y=2, xs=\xscale, ys=\yscale, isep=\nodesize}

\def\highlightsize{4pt}
\draw[red, thick] (b_1) circle (\highlightsize);
\draw[red, thick] (c_1) circle (\highlightsize);
\draw[red, thick] (d_1) circle (\highlightsize);
\draw[red, thick] (g_1) circle (\highlightsize);
\draw[red, thick] (f_2) circle (\highlightsize);

\draw[f_arrow] (a_1) to (b_1);
\draw[f_arrow] (a_2) to (b_1);

\draw[f_arrow] (c_1) to (b_1);

\draw[f_arrow] (d_1) to (c_1);
\draw[f_arrow] (d_2) to (c_1);

\draw[f_arrow] (e_1) to (d_1);
\draw[f_arrow] (e_2) to (d_1);
\draw[f_arrow] (e_3) to (d_2);

\draw[f_arrow] (e_1) to (f_1);
\draw[f_arrow] (e_2) to (f_2);
\draw[f_arrow] (e_3) to (f_2);

\draw[f_arrow] (f_1) to (g_1);
\draw[f_arrow] (f_2) to (g_1);

\draw[f_arrow] (d_3) to (c_1);
\draw[f_arrow] (e_4) to (d_3);
\draw[f_arrow] (e_4) to (f_3);
\draw[f_arrow] (f_3) to (g_1);

\draw[->, thick] (4.9,1.5) -- (5.3,1.5) node[midway, above] {\small algorithm};

\end{scope}

\begin{scope}[xshift=5.75cm]

\draw[->, >=stealth, shorten >=4pt, shorten <=4pt] (0,\yarrow) -- ({\xsep*1},\yarrow);
\draw[<-, >=stealth, shorten >=4pt, shorten <=4pt] ({\xsep*1},\yarrow) -- ({\xsep*2},\yarrow);
\draw[<-, >=stealth, shorten >=4pt, shorten <=4pt] ({\xsep*2},\yarrow) -- ({\xsep*3},\yarrow);
\draw[<-, >=stealth, shorten >=4pt, shorten <=4pt] ({\xsep*3},\yarrow) -- ({\xsep*4},\yarrow);
\draw[->, >=stealth, shorten >=4pt, shorten <=4pt] ({\xsep*4},\yarrow) -- ({\xsep*5},\yarrow);
\draw[->, >=stealth, shorten >=4pt, shorten <=4pt] ({\xsep*5},\yarrow) -- ({\xsep*6},\yarrow);

\foreach \x in {0,...,6} {
    \draw[dashed] ({\xsep*\x},\ylinebot) -- ({\xsep*\x},\ylinetop);
    \node at ({\xsep*\x},\ylabel) { $\overset{\fpeval{1+\x}}{\bullet}$};
}

\pgfkeys{/mynode,
    xs=\xscale, 
    ys=\yscale, 
    isep=2*\nodesize,
    fill=white
  }
  
\mynode{name=a_1, fill=white, x=0, y=1.5, xs=\xscale, ys=\yscale, isep=\nodesize}
\mynode{name=a_2, fill=white, x=0, y=2.5, xs=\xscale, ys=\yscale, isep=\nodesize}

\mynode{name=b_1, fill=white, x={\xsep*1}, y=2, xs=\xscale, ys=\yscale, isep=\nodesize}

\mynode{name=c_1, fill=white, x={\xsep*2}, y=2, xs=\xscale, ys=\yscale, isep=\nodesize}

\mynode{name=d_1, fill=white, x={\xsep*3}, y=1.5, xs=\xscale, ys=\yscale, isep=\nodesize}
\mynode{name=d_2, fill=white, x={\xsep*3}, y=2, xs=\xscale, ys=\yscale, isep=\nodesize}
\mynode{name=d_3, fill=white, x={\xsep*3}, y=2.5, xs=\xscale, ys=\yscale, isep=\nodesize}

\mynode{name=e_1, fill=white, x={\xsep*4}, y=1, xs=\xscale, ys=\yscale, isep=\nodesize}
\mynode{name=e_2, fill=white, x={\xsep*4}, y=1.5, xs=\xscale, ys=\yscale, isep=\nodesize}
\mynode{name=e_3, fill=white, x={\xsep*4}, y=2, xs=\xscale, ys=\yscale, isep=\nodesize}
\mynode{name=e_4, fill=white, x={\xsep*4}, y=2.5, xs=\xscale, ys=\yscale, isep=\nodesize}

\mynode{name=f_1, fill=white, x={\xsep*5}, y=1, xs=\xscale, ys=\yscale, isep=\nodesize}
\mynode{name=f_2, fill=white, x={\xsep*5}, y=2, xs=\xscale, ys=\yscale, isep=\nodesize}
\mynode{name=f_3, fill=white, x={\xsep*5}, y=2.5, xs=\xscale, ys=\yscale, isep=\nodesize}

\mynode{name=g_1, fill=white, x={\xsep*6}, y=2, xs=\xscale, ys=\yscale, isep=\nodesize}

\draw[f_arrow] (a_1) to (b_1);

\draw[f_arrow] (c_1) to (b_1);

\draw[f_arrow] (d_2) to (c_1);

\draw[f_arrow] (e_2) to (d_1);
\draw[f_arrow] (e_3) to (d_2);

\draw[f_arrow] (e_1) to (f_1);
\draw[f_arrow] (e_3) to (f_2);

\draw[f_arrow] (f_2) to (g_1);

\draw[f_arrow] (e_4) to (d_3);
\draw[f_arrow] (e_4) to (f_3);
\end{scope}

\end{tikzpicture}
\captionof{figure}{
\textbf{Left:} Merge graph $\mathcal{G}(\pi_0(\mathbb X))$ with merging and splitting vertices highlighted, obstructing a barcode structure. \textbf{Right:} The goal: a barcode basis of the same zigzag module.
}
\end{center}

Given an elementary formigram $\Sb$, the algorithm of Dey and Hou constructs a barcode graph $\mathcal{M}^{\mathrm{bar}}$ from $\mathcal G(\Sb)$ such that 
$$k[\Sb] \cong k[\mathcal{M}^{\mathrm{bar}}]$$
in near-linear time in the number of vertices. It proceeds by iteratively simplifying the merge graph $\mathcal G(\Sb)$. We summarise the algorithm in Section~\ref{sec:the-algo}.

In practice, formigrams constructed from binary videos (Section~\ref{sec:frames-to-formi}) are not elementary, as higher merging and splitting multiplicities may occur. In Section~\ref{sec:elem-refin}, we show that any formigram can be \emph{refined} (cf.\ Definition~\ref{def:elem-ref}) into an elementary one, allowing the above algorithm to be applied. We then show that the barcode of the unperturbed formigram can be recovered, and that this process is independent of the generic perturbation.

\subsection{$i$-Frontier Forests}

A barcode graph is a graph without merging or splitting vertices. The algorithm presented in~\cite{deyComputingZigzagPersistence2021} therefore proceeds iteratively on the level, removing splitting and merging starting at level 1 and ending at level $n$. To formalise this, we introduce an intermediate state of merge graphs, which lies between formigrams and barcode graphs.

\begin{definition}
Let $\mathcal M$ be a merge graph over a zigzag type $\tau$. For $i\in\tau$, we say that $\mathcal M$ is an \emph{$i$-frontier forest} if the following holds on the truncation $\mathcal M[1,i]$:
\begin{itemize}
\item if $v\in\mathcal M[1,i]_d$ is a death vertex, then $d<i$ and $v$ is part of a bar component ending before $i$:
$$v\in \mathcal B\langle j,d\rangle, \quad \text{for $j\leq d.$}$$
\item $\mathcal M[1,i]$ contains no merging vertices. 
\end{itemize}
\end{definition}

\begin{remark}
Unravelling the definition, an $i$-frontier forest is thus a merge graph, such that the truncation $\mathcal{M}[1,i]$ decomposes into bars that end before the level $i$, and \emph{rooted trees}, i.e.\ connected components $\mathcal T$ of $\mathcal M[1,i]$ with a unique minimal level vertex $r$ called the \emph{root}. These graphs correspond to partially decomposed formigrams.
\end{remark}

\begin{center}
\begin{tikzpicture}[yscale=-1, scale=1.5, swatch/.style={draw=none, fill=#1, rectangle, minimum size=2.5mm, inner sep=0pt},f_arrow/.style={
    base_style,
    postaction={
      decorate,
      decoration={
        markings,
        mark=at position 0.55 with {\arrow[scale=1.3]{stealth}}
      }
    }
  },]
\def\xscale{1pt}
\def\yscale{1pt}
\def\nodesize{1.5pt}
\def\specialsize{2pt}

\def\ytop{1}

\def\yarrow{0.9}
\def\ylinebot{1}
\def\ylinetop{4}
\def\ylabel{0.82}

\def\xsep{0.75}

\node at (-0.45,\yarrow) {$\tau:$};

\foreach \x in {0,...,6} {
    \ifnum\x=3\relax
     \draw[thick, dashed, red] ({\xsep*\x},\ylinebot) -- ({\xsep*\x},\ylinetop);
      \node at ({\xsep*\x},\ylabel) { $\overset{\fpeval{1+\x}}{\bullet}$};
    \else
    \draw[dotted] ({\xsep*\x},\ylinebot) -- ({\xsep*\x},\ylinetop);
  \node at ({\xsep*\x},\ylabel) { $\overset{\fpeval{1+\x}}{\bullet}$};
    \fi
  
}

\node[anchor=south, red]
    at ({\xsep*3},\ylinetop+0.4) {frontier};

\draw[->, >=stealth, shorten >=4pt, shorten <=4pt] (0,\yarrow) -- ({\xsep*1},\yarrow);
\draw[<-, >=stealth, shorten >=4pt, shorten <=4pt] ({\xsep*1},\yarrow) -- ({\xsep*2},\yarrow);
\draw[<-, >=stealth, shorten >=4pt, shorten <=4pt] ({\xsep*2},\yarrow) -- ({\xsep*3},\yarrow);
\draw[<-, >=stealth, shorten >=4pt, shorten <=4pt] ({\xsep*3},\yarrow) -- ({\xsep*4},\yarrow);
\draw[->, >=stealth, shorten >=4pt, shorten <=4pt] ({\xsep*4},\yarrow) -- ({\xsep*5},\yarrow);
\draw[<-, >=stealth, shorten >=4pt, shorten <=4pt] ({\xsep*5},\yarrow) -- ({\xsep*6},\yarrow);

\def\frontier{3} 

\draw[decorate, decoration={brace, amplitude=4pt}]
  (0,0.6) -- ({\xsep*\frontier},0.6)
  node[midway, yshift=12pt] {$\mathcal M[1,4]$};

\pgfkeys{/mynode,
    xs=\xscale, 
    ys=\yscale, 
    isep=2*\nodesize,
    fill=white
  }

\begin{scope}
\mynode{name=a_1, fill=white, x=0, y=1.25, xs=\xscale, ys=\yscale, isep=\nodesize}
\mynode{name=a_2, fill=white, x=0, y=2, xs=\xscale, ys=\yscale, isep=\nodesize}

\mynode{name=b_1, fill=white, x={\xsep*1}, y=1.25, xs=\xscale, ys=\yscale, isep=\nodesize}
\mynode{name=b_2, fill=white, x={\xsep*1}, y=2, xs=\xscale, ys=\yscale, isep=\nodesize}
\mynode{name=b_3, fill=white, x={\xsep*1}, y=3, xs=\xscale, ys=\yscale, isep=\nodesize}
\mynode{name=b_4, fill=white, x={\xsep*1}, y=3.5, xs=\xscale, ys=\yscale, isep=\nodesize}

\mynode{name=c_1, fill=white, x={\xsep*2}, y=1.75, xs=\xscale, ys=\yscale, isep=\nodesize}
\mynode{name=c_2, fill=white, x={\xsep*2}, y=2.25, xs=\xscale, ys=\yscale, isep=\nodesize}
\mynode{name=c_3, fill=white, x={\xsep*2}, y=3, xs=\xscale, ys=\yscale, isep=\nodesize}
\mynode{name=c_4, fill=white, x={\xsep*2}, y=3.5, xs=\xscale, ys=\yscale, isep=\nodesize}

\mynode{name=d_1, fill=white, x={\xsep*3}, y=1.5, xs=\xscale, ys=\yscale, isep=\nodesize}
\mynode{name=d_2, fill=white, x={\xsep*3}, y=1.75, xs=\xscale, ys=\yscale, isep=\nodesize}
\mynode{name=d_3, fill=white, x={\xsep*3}, y=2.25, xs=\xscale, ys=\yscale, isep=\nodesize}
\mynode{name=d_4, fill=white, x={\xsep*3}, y=2.5, xs=\xscale, ys=\yscale, isep=\nodesize}
\mynode{name=d_5, fill=white, x={\xsep*3}, y=3.25, xs=\xscale, ys=\yscale, isep=\nodesize}
\mynode{name=d_6, fill=white, x={\xsep*3}, y=3.75, xs=\xscale, ys=\yscale, isep=\nodesize}

\mynode{name=e_1, fill=white, x={\xsep*4}, y=1.75, xs=\xscale, ys=\yscale, isep=\nodesize}
\mynode{name=e_2, fill=white, x={\xsep*4}, y=2.25, xs=\xscale, ys=\yscale, isep=\nodesize}
\mynode{name=e_3, fill=white, x={\xsep*4}, y=3.25, xs=\xscale, ys=\yscale, isep=\nodesize}
\mynode{name=e_4, fill=white, x={\xsep*4}, y=3.75, xs=\xscale, ys=\yscale, isep=\nodesize}

\mynode{name=f_1, fill=white, x={\xsep*5}, y=2, xs=\xscale, ys=\yscale, isep=\nodesize}
\mynode{name=f_2, fill=white, x={\xsep*5}, y=3.5, xs=\xscale, ys=\yscale, isep=\nodesize}

\mynode{name=g_1, fill=white, x={\xsep*6}, y=3.25, xs=\xscale, ys=\yscale, isep=\nodesize}
\mynode{name=g_2, fill=white, x={\xsep*6}, y=3.75, xs=\xscale, ys=\yscale, isep=\nodesize}


\draw[f_arrow] (a_1) to (b_1);
\draw[f_arrow] (a_2) to (b_2);

\draw[f_arrow] (c_1) to (b_2);
\draw[f_arrow] (c_2) to (b_2);
\draw[f_arrow] (c_3) to (b_3);
\draw[f_arrow] (c_4) to (b_4);

\draw[f_arrow] (d_1) to (c_1);
\draw[f_arrow] (d_2) to (c_1);
\draw[f_arrow] (d_3) to (c_2);
\draw[f_arrow] (d_4) to (c_2);
\draw[f_arrow] (d_5) to (c_4);
\draw[f_arrow] (d_6) to (c_4);

\draw[f_arrow] (e_1) to (d_2);
\draw[f_arrow] (e_2) to (d_3);
\draw[f_arrow] (e_3) to (d_5);
\draw[f_arrow] (e_4) to (d_6);

\draw[f_arrow] (e_1) to (f_1);
\draw[f_arrow] (e_2) to (f_1);
\draw[f_arrow] (e_3) to (f_2);
\draw[f_arrow] (e_4) to (f_2);

\draw[f_arrow] (g_1) to (f_2);
\draw[f_arrow] (g_2) to (f_2);










\end{scope}

\end{tikzpicture}
\captionof{figure}{
\textbf{$4$-Frontier Forest.} The truncation $\M[1,4]$ contains no merging vertices and decomposes into bars and rooted trees.}
\end{center}

To better understand the relation between the vertices of merge graphs, we introduce the following definition. From this point onward, we denote the discrete interval $ \{i\in\mathbb{Z}\, | \, a\leq i\leq b\}$ by $\llbracket a , b \rrbracket$.

\begin{definition}
Let $\mathcal{M}$ be a merge graph over $\tau$. Let $1\leq t_{\mathrm{start}} \leq t_{\mathrm{end}} \leq |\tau|$, a \emph{level path} from level $t_{\mathrm{start}}$ to $t_{\mathrm{end}}$ is a map
\[
\gamma : \llbracket t_\mathrm{start}, t_{\mathrm{end}}\rrbracket \to \mathrm{Vertex}(\mathcal{M})
\]
such that $\lvl(\gamma(t))=t$ for all $t\in\llbracket t_\mathrm{start}, t_{\mathrm{end}}\rrbracket$, and $\gamma(t)$ and $\gamma(t+1)$ are connected by an edge of $\mathcal{M}$ for each $i$.
\end{definition}

\begin{remark}
Given a merge graph $\mathcal{M}$ over $\tau$, vertices $v_1\in \mathcal M_{l_1}$ and $v_2\in \mathcal M_{l_2}$ are connected by a level path if there exists a level path $\gamma:\llbracket l_1, l_2 \rrbracket \to \mathrm{Vertex}(\mathcal M)$ with $\gamma(l_j) = v_j$ for $j\in \{1,2\}$. In this case, we say that $v_1$ is an \emph{ancestor} of $v_2$, and that $v_2$ is a \emph{descendant} of $v_1$. We denote the set of ancestors of a vertex $v$ by $\mathrm{Anc}(v)$
\end{remark}

\begin{remark}
If $\mathcal T$ is a rooted tree of an $i$-frontier forest $\mathcal M$ with root $r$, then every vertex $v\in\mathcal T$ with $v\neq r$ has a unique ancestor. The condition on the death vertices of an $i$-frontier forest ensures that every vertex $v\in\mathcal T$ with $\lvl(v)<i$ has at least one descendant (all leaves must be at the frontier $i$).
\end{remark}

\begin{lemma}\label{lem:forests}
Every merge graph is an $1$-frontier forest. If $\mathcal M$ is a merge graph with terminal level $n$ and $\mathcal M_n=\emptyset$, then $\mathcal M$ is a barcode graph.
\end{lemma}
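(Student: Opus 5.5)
Both claims follow directly from the definitions of an $i$-frontier forest and of a barcode graph, so the plan is to unwind those definitions in the two extreme cases $i=1$ and $i=n$.

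\textbf{First claim.} Take $i=1$. The truncation $\mathcal M[1,1]$ consists only of the vertices at level $1$ and has no edges, so every left-degree in it is zero and there are no merging vertices. The only possible death vertices of $\mathcal M[1,1]$ lie at level $d=1=i$. In the truncated graph, level $1$ is also the terminal level, so these vertices count as deaths there. The death condition then asks for $d<1$, which cannot hold. I would read the condition the intended way: deaths in the truncation at the frontier level itself are not constrained, since level $i$ is the frontier, and vertices there are the leaves of the rooted trees (cf.\ the remark that ``all leaves must be at the frontier $i$''). Under this reading the condition only concerns death vertices $v$ with $\mathrm{lvl}(v)<i$ in $\mathcal M[1,i]$, and there are none when $i=1$. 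So every merge graph is a $1$-frontier forest. The main thing to state carefully in the write-up is exactly this convention: death vertices are taken at levels strictly before the frontier.

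\textbf{Second claim.} Read the hypothesis as: $\mathcal M$ is an $n$-frontier forest with $\mathcal M_n=\emptyset$. Without the frontier-forest hypothesis the claim is false, since merges may occur. Then $\mathcal M[1,n]=\mathcal M$ has no merging vertices. Every death vertex lies at some level $d<n$, because level $n$ is empty, and belongs to a bar component.

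It remains to exclude splitting vertices. Let $v$ be a vertex and follow right-neighbours forward from it. Since there are no merges, each connected component is a rooted tree with a unique minimal vertex, its root. Suppose some vertex $v$ of a component $\mathcal T$ is splitting. Consider the leaves of $\mathcal T$, meaning the vertices of maximal level along each branch. Each leaf has right-degree zero, so it is a death vertex. Level $n$ is empty, so such a leaf is a death vertex at level $<n$ and therefore lies in a bar component. That bar component must be $\mathcal T$ itself, since components are unique. But a bar component has no splitting vertices, which contradicts the choice of $v$.

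Every component contains at least one death vertex, by following right edges until they stop, which happens because the levels are finite. Hence every component is a bar component, and $\mathcal M$ is a disjoint union of bar components, that is, a barcode graph.
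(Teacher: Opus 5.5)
Your proof is correct and follows the same route as the paper. At $i=1$ both frontier-forest conditions are vacuous; for an $n$-frontier forest with $\mathcal M_n=\emptyset$, every component contains a death vertex below level $n$ and so must be a bar component, which is the paper's ``no rooted trees can exist''. Your two clarifications are the assumptions the paper's short proof leaves implicit: the death condition must exempt vertices at the frontier level itself, and the second claim needs $\mathcal M$ to be an $n$-frontier forest, which is how the lemma is used at the algorithm's termination step.
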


\begin{proof}
If $i=1$, then every vertex of $\mathcal M[1,1]$ can be considered its own rooted tree. If $\mathcal M_n=\emptyset$, then every death vertex of $\mathcal M$, if it exists, must occur at level $<n$, and no rooted trees can exist. It follows that every component must be a bar, so $\mathcal M$ is a barcode graph.
\end{proof}

\subsection{The Algorithm}\label{sec:the-algo}

In this section, we describe the details of the algorithm presented in~\cite{deyComputingZigzagPersistence2021}. Let $k$ be a field.

\noindent{\textbf{Input.}} An elementary formigram $\Sb:\mathcal P_\tau \to \cat{set}$ over a zigzag type $\tau$ of length $|\tau|=n$.

\noindent{\textbf{Output.}} $\mathrm{Bar}(k[\mathbb S])$.

\noindent{\textbf{Description.}} We initialise an empty multiset $\mathcal{B}_{\mathrm{out}}$ and record bars as they are identified during the execution of the algorithm. The algorithm proceeds iteratively over the levels of $\tau$, constructing a sequence
\[
\mathcal M^{(1)}, \mathcal M^{(2)}, \dots, \mathcal M^{(n)},
\]
with $\mathcal M^{(1)} = \mathcal G(\Sb)$, such that for each $i$, the following invariants hold:
\begin{itemize}
    \item $\mathcal M^{(i)}$ is an $i$-frontier forest,
    \item $k[\Sb] \cong k[\M^{(i)}]$,
    \item the graph has not been modified beyond level $i$, i.e.
    \[
    \mathcal M^{(i)}[i,n] = \mathcal G(\Sb)[i,n].
    \]
\end{itemize}

\noindent At step $i$, suppose that $\mathcal M^{(i)}$ is the $i$-frontier forest satisfying the above invariants, constructed by the previous steps of the algorithm. If the structure map between $i$ and $i+1$ is a bijection, we set $M^{(i+1)}\coloneq M^{(i)}$, and continue. To extend the frontier to level $i+1$ for the other cases, we consider the orientation of the edge between $i$ and $i+1$.

\paragraph{\textbf{Case} $(i\to i+1)$.}
Since $\mathcal M^{(i)}$ coincides with $\mathcal G(\Sb)$ on levels $\geq i$, and $\Sb$ is elementary, the only possible obstruction to $\M^{(i)}$ being an $(i+1)$-frontier forest is the presence of a single merging vertex $m \in \mathcal M^{(i)}_{i+1}$. In this case, there exist distinct vertices $v_1,v_2 \in \mathcal M^{(i)}_i$ such that
\[
\mathbb S[i\to i+1](v_1) = \mathbb S[i\to i+1](v_2) = m.
\]
By the definition of an $i$-frontier forest, $v_1$ and $v_2$ belong to rooted trees $\mathcal T_1$ and $\mathcal T_2$.
\begin{enumerate}[label=\alph*)]
\item \textbf{If} $\T_1=\T_2$: let $a\in \M^{(i)}_{j-1}$ denote the highest level common ancestor of $v_1$ and $v_2$. Since $\T_1$ is a rooted tree, there exist unique level paths 
\[
\gamma_1, \gamma_2 : \llbracket j,i \rrbracket \to \mathrm{Vertex}(\mathcal M^{(i)})
\]
connecting $v_1$ and $v_2$ to their respective level $j$ ancestors (starting just after $a$).

\vspace{0.5cm}
\begin{center}
\begin{tikzpicture}[yscale=-0.75, scale=1.5, swatch/.style={draw=none, fill=#1, rectangle, minimum size=2.5mm, inner sep=0pt},f_arrow/.style={
    base_style,
    postaction={
      decorate,
      decoration={
        markings,
        mark=at position 0.55 with {\arrow[scale=1.3]{stealth}}
      }
    }
  },]
\def\xscale{1pt}
\def\yscale{1pt}
\def\nodesize{1.5pt}
\def\specialsize{2pt}

\def\ytop{1}

\def\yarrow{0.42}
\def\ylinebot{0.6}
\def\ylinetop{4.5}
\def\ylabel{0.42}

\def\xsep{0.75}


\pgfkeys{/mynode,
    xs=\xscale, 
    ys=\yscale, 
    isep=2*\nodesize,
    fill=white
  }

\begin{scope}
\node at (-0.45,\yarrow) {$\tau:$};
\foreach \x in {0,...,4} {

  \ifnum\x=1
    \node at ({\xsep*\x},0.05) {$j$};
  \fi

  \ifnum\x=3
    \node at ({\xsep*\x},0.05) {$i$};
  \fi

  \ifnum\x=4
    \node at ({\xsep*\x},0.05) {$i+1$};
  \fi

  \node at ({\xsep*\x},\ylabel) {$\bullet$};
  \draw[dotted] ({\xsep*\x},\ylinebot) -- ({\xsep*\x},\ylinetop);
}

\draw[<-, >=stealth, shorten >=4pt, shorten <=4pt] (0,\yarrow) -- ({\xsep*1},\yarrow);
\draw[<-, >=stealth, shorten >=4pt, shorten <=4pt] ({\xsep*1},\yarrow) -- ({\xsep*2},\yarrow);
\draw[<-, >=stealth, shorten >=4pt, shorten <=4pt] ({\xsep*2},\yarrow) -- ({\xsep*3},\yarrow);
\draw[->, >=stealth, shorten >=4pt, shorten <=4pt] ({\xsep*3},\yarrow) -- ({\xsep*4},\yarrow);

\mynode{name=a_1, ltxt=$a$, lpos=above left, fill=white, x=0, y=2.75, xs=\xscale, ys=\yscale, isep=\nodesize}

\mynode{name=b_1, ltxt=$ $, lpos=above left, fill=white, x={\xsep*1}, y=2, xs=\xscale, ys=\yscale, isep=\nodesize}
\mynode{name=b_2, fill=white, x={\xsep*1}, y=3.5, xs=\xscale, ys=\yscale, isep=\nodesize}

\mynode{name=c_1, ltxt=$b$, fill=white, x={\xsep*2}, y=1.25, xs=\xscale, ys=\yscale, isep=\nodesize}
\mynode{name=c_2, ltxt=$ $, fill=white, x={\xsep*2}, y=2, xs=\xscale, ys=\yscale, isep=\nodesize}
\mynode{name=c_3, fill=white, x={\xsep*2}, y=3.5, xs=\xscale, ys=\yscale, isep=\nodesize}

\mynode{name=d_1, fill=white, x={\xsep*3}, y=1.25, xs=\xscale, ys=\yscale, isep=\nodesize}
\mynode{name=d_2, ltxt=$v_1$, lpos=above right, fill=white, x={\xsep*3}, y=2., xs=\xscale, ys=\yscale, isep=\nodesize}
\mynode{name=d_3, ltxt=$v_2$, lpos=below right, fill=white, x={\xsep*3}, y=3.5, xs=\xscale, ys=\yscale, isep=\nodesize}
\mynode{name=d_4, ltxt=$ $, fill=white, x={\xsep*3}, y=4.25, xs=\xscale, ys=\yscale, isep=\nodesize}

\mynode{name=e_1, fill=white, x={\xsep*4}, y=1.25, xs=\xscale, ys=\yscale, isep=\nodesize}
\mynode{name=e_2, ltxt=$m$, lpos=below right,, fill=white, x={\xsep*4}, y=2.75, xs=\xscale, ys=\yscale, isep=\nodesize}
\mynode{name=e_3, ltxt=$ $, fill=white, x={\xsep*4}, y=4.25, xs=\xscale, ys=\yscale, isep=\nodesize}

\draw[b_arrow] (a_1) to (b_1);
\draw[b_arrow] (a_1) to (b_2);

\draw[f_arrow] (c_1) to (b_1);
\draw[f_arrow] (c_2) to (b_1);
\draw[f_arrow] (c_3) to (b_2);

\draw[f_arrow] (d_1) to (c_1);
\draw[f_arrow] (d_2) to (c_2);
\draw[f_arrow] (d_3) to (c_3);
\draw[f_arrow] (d_4) to (c_3);

\draw[f_arrow] (d_1) to (e_1);
\draw[f_arrow] (d_2) to (e_2);
\draw[f_arrow] (d_3) to (e_2);
\draw[f_arrow] (d_4) to (e_3);

\draw[red, dashed]
  ($(b_1)+(0,1.2mm)$) -- ($(d_2)+(0,1.2mm)$)
  node[midway, below] {$\gamma_1$};

\draw[blue, dashed]
($(b_2)-(0,1.2mm)$) -- ($(d_3)-(0,1.2mm)$)
node[midway, above] {$\gamma_2$};

\node at ({2*\xsep},\ylinetop+0.5) {$\M^{(i)}$};

\end{scope}

\coordinate (L) at ({4*\xsep},{(\ylinebot+\ylinetop)/2});
\coordinate (R) at ({4*\xsep+4.75cm},{(\ylinebot+\ylinetop)/2});

\draw[->] ($(L)!0.25!(R)$) -- ($(L)!0.75!(R)$);

\begin{scope}[xshift=4.75cm]

\foreach \x in {0,...,4} {

  \ifnum\x=1
    \node at ({\xsep*\x},0.05) {$j$};
  \fi

  \ifnum\x=3
    \node at ({\xsep*\x},0.05) {$i$};
  \fi

  \ifnum\x=4
    \node at ({\xsep*\x},0.05) {$i+1$};
  \fi

  \node at ({\xsep*\x},\ylabel) {$\bullet$};
  \draw[dotted] ({\xsep*\x},\ylinebot) -- ({\xsep*\x},\ylinetop);
}

\draw[<-, >=stealth, shorten >=4pt, shorten <=4pt] (0,\yarrow) -- ({\xsep*1},\yarrow);
\draw[<-, >=stealth, shorten >=4pt, shorten <=4pt] ({\xsep*1},\yarrow) -- ({\xsep*2},\yarrow);
\draw[<-, >=stealth, shorten >=4pt, shorten <=4pt] ({\xsep*2},\yarrow) -- ({\xsep*3},\yarrow);
\draw[->, >=stealth, shorten >=4pt, shorten <=4pt] ({\xsep*3},\yarrow) -- ({\xsep*4},\yarrow);

\mynode{name=a_1, ltxt=$a$, lpos=below left, fill=white, x=0, y=2.5, xs=\xscale, ys=\yscale, isep=\nodesize}

\mynode{name=b_1, ltxt=$\gamma_1(j)$, lpos=above left, fill=white, x={\xsep*1}, y=1.5, xs=\xscale, ys=\yscale, isep=\nodesize}
\mynode{name=b_2, ltxt=$ $, fill=white, x={\xsep*1}, y=2.5, xs=\xscale, ys=\yscale, isep=\nodesize}

\mynode{name=c_1, ltxt=$b$, lpos=below left, fill=white, x={\xsep*2}, y=3.9, xs=\xscale, ys=\yscale, isep=\nodesize}
\mynode{name=c_2, ltxt=$ $, fill=white, x={\xsep*2}, y=1.5, xs=\xscale, ys=\yscale, isep=\nodesize}
\mynode{name=c_3,  fill=white, x={\xsep*2}, y=2.5, xs=\xscale, ys=\yscale, isep=\nodesize}

\mynode{name=d_1, fill=white, x={\xsep*3}, y=3.9, xs=\xscale, ys=\yscale, isep=\nodesize}
\mynode{name=d_2, ltxt=$v_1$, lpos=above right,  fill=white, x={\xsep*3}, y=1.5, xs=\xscale, ys=\yscale, isep=\nodesize}
\mynode{name=d_3, ltxt=$v_2$, lpos=below right, fill=white, x={\xsep*3}, y=2.5, xs=\xscale, ys=\yscale, isep=\nodesize}
\mynode{name=d_4, ltxt=$ $ ,fill=white, x={\xsep*3}, y=3.25, xs=\xscale, ys=\yscale, isep=\nodesize}

\mynode{name=e_1, fill=white, x={\xsep*4}, y=3.9, xs=\xscale, ys=\yscale, isep=\nodesize}
\mynode{name=e_2, ltxt=$m$, lpos=above right,  fill=white, x={\xsep*4}, y=2.5, xs=\xscale, ys=\yscale, isep=\nodesize}
\mynode{name=e_3, ltxt=$ $ ,fill=white, x={\xsep*4}, y=3.25, xs=\xscale, ys=\yscale, isep=\nodesize}

\draw[b_arrow] (a_1) to (b_2);

\draw[f_arrow] (c_1) to (b_2);
\draw[f_arrow] (c_2) to (b_1);
\draw[f_arrow] (c_3) to (b_2);

\draw[f_arrow] (d_1) to (c_1);
\draw[f_arrow] (d_2) to (c_2);
\draw[f_arrow] (d_3) to (c_3);
\draw[f_arrow] (d_4) to (c_3);

\draw[f_arrow] (d_1) to (e_1);
\draw[f_arrow] (d_3) to (e_2);
\draw[f_arrow] (d_4) to (e_3);

\node at ({2*\xsep},\ylinetop+0.5) {$\M^{(i+1)}$};
\end{scope}

\end{tikzpicture}
\end{center}

\vspace{0.5cm}

\item \textbf{If} $\T_1\neq\T_2$: Let $r_1$ and $r_2$ denote the roots of $\T_1$ and $\T_2$, and assume without loss of generality that $j \coloneq \lvl(r_1) \geq \lvl(r_2)$. Then there exist unique level paths
\[
\gamma_1, \gamma_2 : \llbracket j,i \rrbracket \to \mathrm{Vertex}(\mathcal M^{(i)})
\]
where $\gamma_1$ connects $r_1$ to $v_1$, and $\gamma_2$ connects the level-$j$ ancestor of $v_2$ to $v_2$

\vspace{1cm}
\begin{center}
\begin{tikzpicture}[yscale=-0.75, scale=1.5, swatch/.style={draw=none, fill=#1, rectangle, minimum size=2.5mm, inner sep=0pt},f_arrow/.style={
    base_style,
    postaction={
      decorate,
      decoration={
        markings,
        mark=at position 0.55 with {\arrow[scale=1.3]{stealth}}
      }
    }
  },]
\def\xscale{1pt}
\def\yscale{1pt}
\def\nodesize{1.5pt}
\def\specialsize{2pt}

\def\ytop{1}

\def\yarrow{0.42}
\def\ylinebot{0.6}
\def\ylinetop{4.5}
\def\ylabel{0.42}

\def\xsep{0.75}


\pgfkeys{/mynode,
    xs=\xscale, 
    ys=\yscale, 
    isep=2*\nodesize,
    fill=white
  }

\node at (-0.45,\yarrow) {$\tau:$};

\begin{scope}
\foreach \x in {0,...,4} {

  \ifnum\x=1
    \node at ({\xsep*\x},0.05) {$j$};
  \fi

  \ifnum\x=3
    \node at ({\xsep*\x},0.05) {$i$};
  \fi

  \ifnum\x=4
    \node at ({\xsep*\x},0.05) {$i+1$};
  \fi

  \node at ({\xsep*\x},\ylabel) {$\bullet$};
  \draw[dotted] ({\xsep*\x},\ylinebot) -- ({\xsep*\x},\ylinetop);
}

\draw[->, >=stealth, shorten >=4pt, shorten <=4pt] (0,\yarrow) -- ({\xsep*1},\yarrow);
\draw[<-, >=stealth, shorten >=4pt, shorten <=4pt] ({\xsep*1},\yarrow) -- ({\xsep*2},\yarrow);
\draw[<-, >=stealth, shorten >=4pt, shorten <=4pt] ({\xsep*2},\yarrow) -- ({\xsep*3},\yarrow);
\draw[->, >=stealth, shorten >=4pt, shorten <=4pt] ({\xsep*3},\yarrow) -- ({\xsep*4},\yarrow);

\mynode{name=a_1, ltxt=$r_2$, lpos=above left, fill=white, x=0, y=3.25, xs=\xscale, ys=\yscale, isep=\nodesize}

\mynode{name=b_1, ltxt=$r_1$, lpos=above left, fill=white, x={\xsep*1}, y=2, xs=\xscale, ys=\yscale, isep=\nodesize}
\mynode{name=b_2, ltxt=$ $, fill=white, x={\xsep*1}, y=3.25, xs=\xscale, ys=\yscale, isep=\nodesize}

\mynode{name=c_1, ltxt=$b$, fill=white, x={\xsep*2}, y=1.25, xs=\xscale, ys=\yscale, isep=\nodesize}
\mynode{name=c_2, ltxt=$ $, fill=white, x={\xsep*2}, y=2, xs=\xscale, ys=\yscale, isep=\nodesize}
\mynode{name=c_3, fill=white, x={\xsep*2}, y=3.25, xs=\xscale, ys=\yscale, isep=\nodesize}

\mynode{name=d_1, fill=white, x={\xsep*3}, y=1.25, xs=\xscale, ys=\yscale, isep=\nodesize}
\mynode{name=d_2, ltxt=$v_1$, lpos=above right, fill=white, x={\xsep*3}, y=2., xs=\xscale, ys=\yscale, isep=\nodesize}
\mynode{name=d_3, ltxt=$v_2$, lpos=below right, fill=white, x={\xsep*3}, y=3.25, xs=\xscale, ys=\yscale, isep=\nodesize}

\mynode{name=e_1, ltxt=$ $, fill=white, x={\xsep*4}, y=1.25, xs=\xscale, ys=\yscale, isep=\nodesize}
\mynode{name=e_2, ltxt=$m$, lpos=below right,, fill=white, x={\xsep*4}, y=2.75, xs=\xscale, ys=\yscale, isep=\nodesize}

\draw[f_arrow] (a_1) to (b_2);

\draw[f_arrow] (c_1) to (b_1);
\draw[f_arrow] (c_2) to (b_1);
\draw[f_arrow] (c_3) to (b_2);

\draw[f_arrow] (d_1) to (c_1);
\draw[f_arrow] (d_2) to (c_2);
\draw[f_arrow] (d_3) to (c_3);

\draw[f_arrow] (d_1) to (e_1);
\draw[f_arrow] (d_2) to (e_2);
\draw[f_arrow] (d_3) to (e_2);

\draw[red, dashed]
  ($(b_1)+(0,1.2mm)$) -- ($(d_2)+(0,1.2mm)$)
  node[midway, below] {$\gamma_1$};

\draw[blue, dashed]
($(b_2)-(0,1.2mm)$) -- ($(d_3)-(0,1.2mm)$)
node[midway, above] {$\gamma_2$};

\node at ({2*\xsep},\ylinetop+0.5) {$\M^{(i)}$};

\end{scope}

\coordinate (L) at ({4*\xsep},{(\ylinebot+\ylinetop)/2});
\coordinate (R) at ({4*\xsep+4.75cm},{(\ylinebot+\ylinetop)/2});
\draw[->] ($(L)!0.25!(R)$) -- ($(L)!0.75!(R)$);

\begin{scope}[xshift=4.75cm]

\foreach \x in {0,...,4} {

  \ifnum\x=1
    \node at ({\xsep*\x},0.05) {$j$};
  \fi

  \ifnum\x=3
    \node at ({\xsep*\x},0.05) {$i$};
  \fi

  \ifnum\x=4
    \node at ({\xsep*\x},0.05) {$i+1$};
  \fi

  \node at ({\xsep*\x},\ylabel) {$\bullet$};
  \draw[dotted] ({\xsep*\x},\ylinebot) -- ({\xsep*\x},\ylinetop);
}

\draw[->, >=stealth, shorten >=4pt, shorten <=4pt] (0,\yarrow) -- ({\xsep*1},\yarrow);
\draw[<-, >=stealth, shorten >=4pt, shorten <=4pt] ({\xsep*1},\yarrow) -- ({\xsep*2},\yarrow);
\draw[<-, >=stealth, shorten >=4pt, shorten <=4pt] ({\xsep*2},\yarrow) -- ({\xsep*3},\yarrow);
\draw[->, >=stealth, shorten >=4pt, shorten <=4pt] ({\xsep*3},\yarrow) -- ({\xsep*4},\yarrow);

\mynode{name=a_1, ltxt=$r_2$, lpos=below left, fill=white, x=0, y=3.25, xs=\xscale, ys=\yscale, isep=\nodesize}

\mynode{name=b_1, ltxt=$r_1$, lpos=above left, fill=white, x={\xsep*1}, y=2, xs=\xscale, ys=\yscale, isep=\nodesize}
\mynode{name=b_2, ltxt=$ $, fill=white, x={\xsep*1}, y=3.25, xs=\xscale, ys=\yscale, isep=\nodesize}

\mynode{name=c_1, ltxt=$b$, lpos=below left, fill=white, x={\xsep*2}, y=4, xs=\xscale, ys=\yscale, isep=\nodesize}
\mynode{name=c_2, ltxt=$ $, fill=white, x={\xsep*2}, y=2, xs=\xscale, ys=\yscale, isep=\nodesize}
\mynode{name=c_3, fill=white, x={\xsep*2}, y=3.25, xs=\xscale, ys=\yscale, isep=\nodesize}

\mynode{name=d_1, fill=white, x={\xsep*3}, y=4, xs=\xscale, ys=\yscale, isep=\nodesize}
\mynode{name=d_2, ltxt=$v_1$, lpos=above right, fill=white, x={\xsep*3}, y=2., xs=\xscale, ys=\yscale, isep=\nodesize}
\mynode{name=d_3, ltxt=$v_2$, lpos=below right, fill=white, x={\xsep*3}, y=3.25, xs=\xscale, ys=\yscale, isep=\nodesize}

\mynode{name=e_1, ltxt=$ $, fill=white, x={\xsep*4}, y=4, xs=\xscale, ys=\yscale, isep=\nodesize}
\mynode{name=e_2, ltxt=$m$, lpos=right,, fill=white, x={\xsep*4}, y=2.75, xs=\xscale, ys=\yscale, isep=\nodesize}

\draw[f_arrow] (a_1) to (b_2);

\draw[f_arrow] (c_1) to (b_2);
\draw[f_arrow] (c_2) to (b_1);
\draw[f_arrow] (c_3) to (b_2);

\draw[f_arrow] (d_1) to (c_1);
\draw[f_arrow] (d_2) to (c_2);
\draw[f_arrow] (d_3) to (c_3);

\draw[f_arrow] (d_1) to (e_1);
\draw[f_arrow] (d_3) to (e_2);

\node at ({2*\xsep},\ylinetop+0.5) {$\M^{(i+1)}$};

\end{scope}

\end{tikzpicture}
\end{center}

\vspace{0.5cm}

\end{enumerate}

In both cases, we thus obtain level paths $\gamma_1, \gamma_2 : \llbracket j,i \rrbracket \to \mathrm{Vertex}(\mathcal M^{(i)})$, and we proceed as follows. We construct $\mathcal M^{(i+1)}$ from $\mathcal M^{(i)}$ by keeping the vertex set fixed and redirecting all off-path edges incident to $\gamma_1$ onto $\gamma_2$, levelwise. Concretely, for each $t\in \llbracket j, i-1 \rrbracket$, we perform the following replacement: if $(u,\gamma_1(t))$ is an edge with $u\neq \gamma_1(t\pm1)$, replace it by $(u,\gamma_2(t))$. Finally, at the endpoints, any outgoing edges from $\gamma_1(i)$ and $\gamma_1(j)$ are removed (if they exist). 

In both cases, the path $\gamma_1$ becomes isolated and hence determines a bar component in $\M^{(i+1)}$ with endpoints $\langle j,i\rangle$. We record a corresponding bar in $\mathcal B_{\mathrm{out}}$. The type of bar is determined by the direction of the maps at the endpoints: in case a), the map preceding level $j$ is backward, yielding a bar $(j,i)$, whereas in case b), it is forward, yielding a bar $[j,i)$.

\paragraph{\textbf{Case} $(i+1\to i)$.} Since $\Sb$ is elementary, the only obstruction to $\mathcal M^{(i)}$ being an $(i+1)$-frontier forest is the presence of a death vertex $v$ at level $i$. If $v$ is part of a bar component $\mathcal B\langle j,i\rangle$, then $\M^{(i)}$ is already an $(i+1)$-forest and we set $\M^{(i+1)}\coloneq\M^{(i)}$. Moreover, we record a bar $[j,i]$ in $\mathcal B_{\mathrm{out}}$.

In case $v$ does not belong to a bar component, let $s\in  \M^{(i)}$ be the highest level ancestor of $v$ that is also a splitting vertex and set $j \coloneq \lvl(s)+1$. We obtain $\mathcal M^{(i+1)}$ from $\mathcal M^{(i)}$ by keeping the vertex set fixed and deleting the edge between $s$ and the level-$j$ ancestor of $v$. In this case, the path from $v$ to its level-$j$ ancestor becomes a bar component in $\mathcal M^{(i+1)}$. The existence of the splitting vertex $s$ forces the arrow direction $j\to j-1$ in $\tau$. Hence, we record a bar $(j,i]$ in $\mathcal B_{\mathrm{out}}$.

\vspace{0.5cm}

\begin{center}
\begin{tikzpicture}[yscale=-0.75, scale=1.5, swatch/.style={draw=none, fill=#1, rectangle, minimum size=2.5mm, inner sep=0pt},f_arrow/.style={
    base_style,
    postaction={
      decorate,
      decoration={
        markings,
        mark=at position 0.55 with {\arrow[scale=1.3]{stealth}}
      }
    }
  },]
\def\xscale{1pt}
\def\yscale{1pt}
\def\nodesize{1.5pt}
\def\specialsize{2pt}

\def\ytop{1}

\def\yarrow{0.42}
\def\ylinebot{0.6}
\def\ylinetop{3.75}
\def\ylabel{0.42}

\def\xsep{0.75}


\pgfkeys{/mynode,
    xs=\xscale, 
    ys=\yscale, 
    isep=2*\nodesize,
    fill=white
  }

\begin{scope}

\node at (-0.45,\yarrow) {$\tau:$};

\foreach \x in {0,...,4} {

  \ifnum\x=1
    \node at ({\xsep*\x},0.05) {$j$};
  \fi

  \ifnum\x=3
    \node at ({\xsep*\x},0.05) {$i$};
  \fi

  \ifnum\x=4
    \node at ({\xsep*\x},0.05) {$i+1$};
  \fi

  \node at ({\xsep*\x},\ylabel) {$\bullet$};
  \draw[dotted] ({\xsep*\x},\ylinebot) -- ({\xsep*\x},\ylinetop);
}

\draw[->, >=stealth, shorten >=4pt, shorten <=4pt] (0,\yarrow) -- ({\xsep*1},\yarrow);
\draw[<-, >=stealth, shorten >=4pt, shorten <=4pt] ({\xsep*1},\yarrow) -- ({\xsep*2},\yarrow);
\draw[<-, >=stealth, shorten >=4pt, shorten <=4pt] ({\xsep*2},\yarrow) -- ({\xsep*3},\yarrow);
\draw[<-, >=stealth, shorten >=4pt, shorten <=4pt] ({\xsep*3},\yarrow) -- ({\xsep*4},\yarrow);

\mynode{name=a_1, ltxt=$ $, lpos=above left, fill=white, x=0, y=2.75, xs=\xscale, ys=\yscale, isep=\nodesize}

\mynode{name=b_1, ltxt=$a$, lpos=below left, fill=white, x={\xsep*1}, y=2.75, xs=\xscale, ys=\yscale, isep=\nodesize}

\mynode{name=c_1, ltxt=$ $, fill=white, x={\xsep*2}, y=2.75, xs=\xscale, ys=\yscale, isep=\nodesize}
\mynode{name=c_2, ltxt=$ $, fill=white, x={\xsep*2}, y=1.5, xs=\xscale, ys=\yscale, isep=\nodesize}

\mynode{name=d_1, ltxt=$v_2$, lpos=below right, fill=white, x={\xsep*3}, y=2.75, xs=\xscale, ys=\yscale, isep=\nodesize}
\mynode{name=d_2, ltxt=$v_1$, lpos=above right, fill=white, x={\xsep*3}, y=1.5, xs=\xscale, ys=\yscale, isep=\nodesize}

\mynode{name=e_1, ltxt=$ $, lpos=right,, fill=white, x={\xsep*4}, y=2.75, xs=\xscale, ys=\yscale, isep=\nodesize}

\draw[f_arrow] (a_1) to (b_1);

\draw[f_arrow] (c_1) to (b_1);
\draw[f_arrow] (c_2) to (b_1);

\draw[f_arrow] (d_1) to (c_1);
\draw[f_arrow] (d_2) to (c_2);

\draw[f_arrow] (e_1) to (d_1);

\draw[red, dashed]
  ($(c_2)+(0,1.2mm)$) -- ($(d_2)+(0,1.2mm)$)
  node[midway, below] {$\gamma_1$};

\draw[blue, dashed]
($(c_1)-(0,1.2mm)$) -- ($(d_1)-(0,1.2mm)$)
node[midway, above] {$\gamma_2$};

\node at ({2*\xsep},\ylinetop+0.5) {$\M^{(i)}$};

\end{scope}

\coordinate (L) at ({4*\xsep},{(\ylinebot+\ylinetop)/2});
\coordinate (R) at ({4*\xsep+4.75cm},{(\ylinebot+\ylinetop)/2});

\draw[->] ($(L)!0.25!(R)$) -- ($(L)!0.75!(R)$);

\begin{scope}[xshift=4.75cm]

\foreach \x in {0,...,4} {

  \ifnum\x=1
    \node at ({\xsep*\x},0.05) {$j$};
  \fi

  \ifnum\x=3
    \node at ({\xsep*\x},0.05) {$i$};
  \fi

  \ifnum\x=4
    \node at ({\xsep*\x},0.05) {$i+1$};
  \fi

  \node at ({\xsep*\x},\ylabel) {$\bullet$};
  \draw[dotted] ({\xsep*\x},\ylinebot) -- ({\xsep*\x},\ylinetop);
}

\draw[->, >=stealth, shorten >=4pt, shorten <=4pt] (0,\yarrow) -- ({\xsep*1},\yarrow);
\draw[<-, >=stealth, shorten >=4pt, shorten <=4pt] ({\xsep*1},\yarrow) -- ({\xsep*2},\yarrow);
\draw[<-, >=stealth, shorten >=4pt, shorten <=4pt] ({\xsep*2},\yarrow) -- ({\xsep*3},\yarrow);
\draw[<-, >=stealth, shorten >=4pt, shorten <=4pt] ({\xsep*3},\yarrow) -- ({\xsep*4},\yarrow);

\mynode{name=a_1, ltxt=$ $, lpos=above left, fill=white, x=0, y=2.75, xs=\xscale, ys=\yscale, isep=\nodesize}

\mynode{name=b_1, ltxt=$a$, lpos=below left, fill=white, x={\xsep*1}, y=2.75, xs=\xscale, ys=\yscale, isep=\nodesize}

\mynode{name=c_1, ltxt=$ $, fill=white, x={\xsep*2}, y=2.75, xs=\xscale, ys=\yscale, isep=\nodesize}
\mynode{name=c_2, ltxt=$ $, fill=white, x={\xsep*2}, y=1.5, xs=\xscale, ys=\yscale, isep=\nodesize}

\mynode{name=d_1, ltxt=$v_2$, lpos=below right, fill=white, x={\xsep*3}, y=2.75, xs=\xscale, ys=\yscale, isep=\nodesize}
\mynode{name=d_2, ltxt=$v_1$, lpos=above right, fill=white, x={\xsep*3}, y=1.5, xs=\xscale, ys=\yscale, isep=\nodesize}

\mynode{name=e_1, ltxt=$ $, lpos=right,, fill=white, x={\xsep*4}, y=2.75, xs=\xscale, ys=\yscale, isep=\nodesize}

\draw[f_arrow] (a_1) to (b_1);

\draw[f_arrow] (c_1) to (b_1);

\draw[f_arrow] (d_1) to (c_1);
\draw[f_arrow] (d_2) to (c_2);

\draw[f_arrow] (e_1) to (d_1);

\node at ({2*\xsep},\ylinetop+0.5) {$\M^{(i+1)}$};

\end{scope}

\end{tikzpicture}
\end{center}

\vspace{0.5cm}

\paragraph{Termination and output.}
After processing the final level $n$, the merge graph $\mathcal M^{(n)}$ is an $n$-frontier forest. As $\Sb$ is elementary by assumption, Lemma~\ref{lem:forests} implies that $\mathcal M^{(n)}$ is a barcode graph. By the invariant kept up during the algorithm, $k[\Sb] \cong k[\M^{(n)}]$ and the endpoints of the bar components of $\M^{(n)}$ determine the barcode of $k[\Sb]$ by Proposition~\ref{prop:barcode-from-graph}. Therefore the multiset $\mathcal B_{\mathrm{out}}$ contains exactly the intervals  corresponding to the decomposition of $k[\Sb]$, and we return 
\[
\mathcal{B}_{\mathrm{out}} = \mathrm{Bar}(k[\mathbb S]).
\]

\paragraph{Correctness.} The correctness of the algorithm was established in~\cite[Sec.~3]{deyComputingZigzagPersistence2021} for coefficients in $\mathbb{Z}/2\mathbb{Z}$. By Lemma~\ref{lem:field-indep-linearisations}, the barcode of $k[\Sb]$ is independent of the choice of field $k$. It follows that the algorithm correctly computes $\mathrm{Bar}(k[\Sb])$ for an arbitrary field $k$.

\paragraph{Complexity.} Given an elementary formigram $\Ss:\mathcal{P}_\tau \to \cat{set}$ over zigzag type $\tau$ of length $|\tau|=n$, the algorithm can be implemented using the \textit{mergeable trees} data structure of~\cite{georgiadisDataStructuresMergeable2011} with a complexity of $\mathcal{O}(n\log(n))$~\cite{deyComputingZigzagPersistence2021}\footnote{The complexity given in~\cite{deyComputingZigzagPersistence2021} also includes a term accounting for the fact that the formigram is build from a graph $G$. In our setting, we assume that the formigram is provided as an input, c.f. Section~\ref{sec:pipeline} for the complexity of the full pipeline.}.

\subsection{Elementary Refinements}\label{sec:elem-refin}

The setting of binary videos now presents us with a practical problem: the formigrams constructed in Section~\ref{sec:frames-to-formi} are in general not elementary, as merging and splitting vertices may occur with multiplicity greater than one.

The goal of this section is twofold. First, we show that any formigram $\mathbb S$ admits a (generally non-unique) refinement to an elementary formigram $\widetilde{\mathbb S}$ obtained by inserting intermediate spaces. Second, we prove that the barcode of $k[\mathbb S]$ can be recovered from that of $k[\widetilde{\mathbb S}]$, independently of the chosen refinement.

At the end of the section, we will see that this allows us to apply the decomposition algorithm of~\cite{deyComputingZigzagPersistence2021} for general formigrams, such as those arising from binary videos.

\begin{definition}
Let $\tau$ and $\tilde\tau$ be zigzag types with vertex sets $\{1,\dots,n\}$ and $\{1,\dots,m\}$, respectively. A formigram $\widetilde{\mathbb S}: \mathcal P_{\tilde\tau} \to \cat{set}$ is a \emph{refinement} of a formigram $\mathbb S: \mathcal P_\tau \to \cat{set}$ if there exist indices
\[
1 = p_1 < p_2 < \cdots < p_n = m
\]
such that:
\begin{itemize}
    \item $\widetilde{S}_{p_i} = S_i$ for all $i=1,\dots,n$;
     \item For each morphism $e:i\to j$ in $\mathcal{P}_\tau$, there exists a morphism $\tilde e:p_i\to p_j$ in $\mathcal{P}_{\tilde\tau}$ such that:
    $$\Sb[e] = \widetilde\Sb[\tilde e]:S_i\to S_j. $$
\end{itemize}
\end{definition}

\begin{definition}\label{def:elem-ref}
A refinement $\widetilde{\mathbb S}$ of a formigram $\mathbb S$ is called \emph{elementary} if $\widetilde{\mathbb S}$ is elementary.
\end{definition}

\begin{lemma}\label{lem:elem-ref-always-exists}
Every formigram $\mathbb S : \mathcal P_\tau \to \cat{set}$ admits an elementary refinement.
\end{lemma}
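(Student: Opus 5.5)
We construct the refinement locally, one structure map at a time, and then concatenate. The required indices $p_1<\dots<p_n$ are simply the positions of the original sets in the concatenated sequence. Write $\Sb[i\to j]$ for a structure map between consecutive levels. It suffices to factor each such map $f:S_i\to S_j$, a total function between finite sets, as a composite of elementary maps all pointing in the direction of the original arrow. For the refinement condition we then take $\tilde e$ to be the composite morphism $p_i\to p_j$ in $\mathcal P_{\tilde\tau}$. Functoriality of $\widetilde\Sb$ gives $\widetilde\Sb[\tilde e]=f$, and the sets at positions $p_i$ are unchanged.

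\textbf{Factorisation.} Factor $f$ as $S_i \twoheadrightarrow \im f \hookrightarrow S_j$.

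For the surjection, pick a fibre of size at least two and glue two of its points. This produces a surjection that reduces cardinality by exactly one. Iterating, $S_i\twoheadrightarrow \im f$ becomes a chain of $|S_i|-|\im f|$ such maps, each surjective with $|S_a|=|S_{a+1}|+1$; concretely, the intermediate sets are quotients of $S_i$.

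For the injection, add the points of $S_j\setminus\im f$ one at a time. This gives a chain of injections, each increasing cardinality by one.

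If $f$ is a bijection, no subdivision is needed; we may insert nothing, or a single identity step. A backward arrow $S_i\leftarrow S_{i+1}$ is handled identically, with all inserted arrows pointing backward. The resulting zigzag type $\tilde\tau$ is the concatenation of these blocks.

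\textbf{Boundary condition.} Elementarity also demands $\widetilde S_1=\widetilde S_m=\emptyset$, whereas the refinement definition requires $\widetilde S_{p_1}=S_1$ with $p_1=1$. This is the main obstacle: the two requirements conflict whenever $S_1\neq\emptyset$. The intended reading is presumably to first pad $\Sb$ with empty sets at both ends. That is, prepend $\emptyset\to S_1$ and append $S_n\leftarrow\emptyset$, using the (vacuously injective) empty maps, and then refine. I would state this convention explicitly, since padding does not change the barcode: the padded module restricts to $k[\Sb]$, and empty endpoints contribute no bars.

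With the padding in place, the first map $\emptyset\to S_1$ is injective and is refined by adding points one at a time. The last map $S_n\leftarrow\emptyset$ is refined symmetrically by removing points one at a time, read as backward injections. All other maps are handled by the factorisation above. Every inserted map is then injective with cardinality increasing by one, surjective with cardinality decreasing by one, or bijective. Hence $\widetilde\Sb$ is elementary, and the indices $p_i$ record the original levels, which proves the lemma.
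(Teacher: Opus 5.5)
Your factorisation is the paper's argument: factor each structure map through its image, realise the surjection by gluing two points of a common fibre at a time, realise the inclusion by adjoining the points of $S_j\setminus\im f$ one at a time, and insert these levels in the direction of the original arrow.

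Your boundary remark is correct, and the paper's proof passes over it. A refinement must have $\widetilde S_1=S_1$ and $\widetilde S_m=S_n$, while an elementary formigram must have empty end levels. So the lemma as literally stated fails whenever $S_1$ or $S_n$ is nonempty. This includes every foreground formigram of a padded video, since the border component is always present.

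Your padding by $\emptyset\to S_1$ and $S_n\leftarrow\emptyset$ is the right repair. What it actually proves is the slightly modified statement: there is an elementary $\widetilde{\mathbb S}$ in which $\mathbb S$ sits at interior indices $p_1<\dots<p_n$. That is the form needed downstream. It matches how births at level $1$ and deaths at level $n$ are counted in $E(\mathbb S)$. It also supplies the condition $\mathcal M_n=\emptyset$ that Lemma~\ref{lem:forests} needs for the Dey--Hou algorithm to end in a barcode graph.
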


\begin{proof}
We show that each structure map of $\mathbb S$ can be decomposed into elementary maps, and then insert the corresponding intermediate levels to obtain a refinement.

Let $f:A \to B\in\Hom_{\cat{set}} (A,B)$. We can always factor $f$ as
\[
A \overset{\pi_f}{\twoheadrightarrow}  \im(f) \overset{\iota_f}{\hookrightarrow} B,
\]
where $\pi_f$ is the canonical surjection and $\iota_f$ is the inclusion. Next, choose an ordering $\{b_1,\dots,b_{|B|}\}$ of $B$ such that $b_i\in\im (f)$ for $i\leq |\im (f)|$. To decompose $\pi_f$, we choose an ordering of each fibre of $f$
\[
f^{-1}(b_i) = \{a_{i,1}, \dots, a_{i,k_i}\}, \quad \text{for $b_i\in\im(f)$}.
\]
Proceeding in order of the fibres, for each $1 \leq i \leq |\im(f)|$ we identify the elements of $f^{-1}(b_i)$ one pair at a time:
\[
a_{i,1} \sim a_{i,2}, \quad (a_{i,1}\sim a_{i,2}) \sim a_{i,3}, \quad \dots
\]
This yields a sequence of maps
\[
A = \widetilde A_0 \twoheadrightarrow \widetilde A_1 \twoheadrightarrow \cdots \twoheadrightarrow \widetilde A_r = \im(f),
\]
where each map identifies exactly two elements in a common fibre, and is the identity elsewhere. By construction, we have that $\widetilde A_i \twoheadrightarrow \widetilde A_{i+1}$ is surjective, and that $|\widetilde A_{i+1}| = |\widetilde A_i|-1$. To decompose $\iota_f$, write $B \setminus \im(f) = \{b_{|\im(f)|+1},\dots,b_{|B|}\}.$ We then construct a sequence
\[
\im(f) = \widetilde B_0 \hookrightarrow \widetilde B_1 \hookrightarrow \cdots \hookrightarrow \widetilde B_s = B,
\]
where $\widetilde B_{\ell+1} = \widetilde B_\ell \cup \{b_{|\im(f)|+\ell+1}\}$. Each inclusion is elementary. Thus $f$ factors as a composition of elementary maps.

Applying this construction to each structure map of $\mathbb S$ and inserting the corresponding intermediate levels yields a refinement $\widetilde{\mathbb S}$ of $\mathbb S$ whose structure maps are all elementary. Hence $\widetilde{\mathbb S}$ is an elementary refinement.

\end{proof}

The explicit construction of an elementary refinement in the proof of Lemma~\ref{lem:elem-ref-always-exists} depended on a choice of ordering on the sets $S_i$, which is in general non-canonical. Nevertheless, as we now show, the barcode of a formigram can be recovered from any elementary refinement. To this end, we introduce a projection of interval endpoints from the refined index set to the original one.

\begin{definition}
Let $\widetilde{\mathbb S} : \mathcal P_{\tilde\tau} \to \cat{set}$ be a refinement of 
$\mathbb S : \mathcal P_\tau \to \cat{set}$, with indices
\(
1 = p_1 < p_2 < \cdots < p_n = m.
\)
Define maps $\lceil \cdot \rceil, \lfloor \cdot \rfloor : \tilde\tau \to \tau$ by
\[
\lceil t \rceil := \min\{i \in \tau \mid t \le p_i\}, 
\qquad
\lfloor t \rfloor := \max\{i \in \tau \mid p_i \le t\}.
\]
The \emph{projection of an interval} $\langle a,b\rangle$ in $\tilde\tau$ is defined as
\[
\langle a,b\rangle \mapsto \langle \lceil a \rceil, \lfloor b \rfloor \rangle,
\]
whenever $\lceil a \rceil \le \lfloor b \rfloor$, and is discarded otherwise.
\end{definition}

The following lemma is a translation of~\cite[Lem. 3.3]{oudotZigzagZoologyRips2013} to our setting.

\begin{lemma}\label{lem:barcode-refinement-projection}
Let $\widetilde{\mathbb S} : \mathcal P_{\tilde\tau} \to \cat{set}$ be a refinement of 
$\mathbb S : \mathcal P_\tau \to \cat{set}$. Then $\mathrm{Bar}(k[\mathbb S])$ is obtained from $\mathrm{Bar}(k[\widetilde{\mathbb S}])$ by projecting intervals in the sense of the preceding definition.
\end{lemma}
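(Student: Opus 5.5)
The plan is to reduce the statement to a restriction argument on interval decompositions. Let $\widetilde{\V}\coloneq k[\widetilde{\mathbb S}]$ and $\V\coloneq k[\mathbb S]$. Consider the restriction of $\widetilde{\V}$ to the indices $p_1<\dots<p_n$. For an edge $e:i\to i+1$ of $\tau$, the refinement condition gives a morphism $\tilde e:p_i\to p_{i+1}$ in $\mathcal P_{\tilde\tau}$ with $\widetilde{\mathbb S}[\tilde e]=\mathbb S[e]$. Since the existence of such a morphism forces every arrow of $\tilde\tau$ strictly between $p_i$ and $p_{i+1}$ to point forward, $\widetilde{\V}[\tilde e]$ is the composite of the structure maps of $\widetilde{\V}$ along $\llbracket p_i,p_{i+1}\rrbracket$. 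The backward case is symmetric. Applying $k[-]$, which is a functor and hence preserves composites, we obtain an identification of $\V$ with the ``restriction'' $\mathrm{Res}(\widetilde{\V})$. This is the zigzag module over $\tau$ whose value at $i$ is $\widetilde V_{p_i}$ and whose structure maps are the composites along the blocks $\llbracket p_i,p_{i+1}\rrbracket$.

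Next we check that $\mathrm{Res}$ commutes with direct sums. This is immediate, since direct sums and composites of block-diagonal maps are computed componentwise. By Theorem~\ref{thm:gabriel}, $\widetilde{\V}\cong\bigoplus_j \mathbb I_{\tilde\tau}\langle a_j,b_j\rangle$, and $\mathrm{Res}$ is a functor (restriction along the obvious functor $\mathcal P_\tau\to\mathcal P_{\tilde\tau}$, $i\mapsto p_i$). Hence
\[
\V\cong\bigoplus_j \mathrm{Res}\bigl(\mathbb I_{\tilde\tau}\langle a_j,b_j\rangle\bigr).
\]
By Theorem~\ref{thm:unique-decomp}, it then suffices to compute the restriction of a single interval module.

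That computation is the key step. For $\mathbb I_{\tilde\tau}\langle a,b\rangle$, the value at $i$ is $k$ exactly when $a\le p_i\le b$, i.e.\ when $\lceil a\rceil\le i\le\lfloor b\rfloor$, and $0$ otherwise. For consecutive $i,i+1$ both in this range, the composite map along $\llbracket p_i,p_{i+1}\rrbracket$ stays inside $\llbracket a,b\rrbracket$, because $a\le p_i$ and $p_{i+1}\le b$. It is therefore a composite of identities, hence the identity. All other maps have zero source or target. So the restriction is $\mathbb I_\tau\langle\lceil a\rceil,\lfloor b\rfloor\rangle$ when $\lceil a\rceil\le\lfloor b\rfloor$, and $0$ otherwise. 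This matches exactly the projection rule, including the discarding of intervals, and yields the lemma with multiplicities preserved.

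The main obstacle I expect is the first step: justifying rigorously that the refinement condition makes the maps of $\V$ agree with composites in $\widetilde{\V}$. This requires noting that a morphism $p_i\to p_{i+1}$ in $\mathcal P_{\tilde\tau}$ exists only if all intermediate arrows share one orientation, and that functoriality of $\widetilde{\mathbb S}$ then identifies $\widetilde{\mathbb S}[\tilde e]$ with the composite. Once this is written out, everything else follows from Theorems~\ref{thm:gabriel} and~\ref{thm:unique-decomp}.
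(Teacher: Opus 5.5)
Your proof is correct: restriction along $i\mapsto p_i$ identifies $k[\mathbb S]$ with the block-composite restriction of $k[\widetilde{\mathbb S}]$. This operation is additive and sends $\mathbb I_{\tilde\tau}\langle a,b\rangle$ to $\mathbb I_\tau\langle\lceil a\rceil,\lfloor b\rfloor\rangle$ or to $0$, so Theorems~\ref{thm:gabriel} and~\ref{thm:unique-decomp} finish the argument with multiplicities preserved. The paper writes out no proof of its own and instead cites the restriction lemma of Oudot--Sheehy~\cite[Lem.~3.3]{oudotZigzagZoologyRips2013}; your argument is essentially the standard proof of that lemma, written out in full, and it never uses that $\widetilde{\mathbb S}$ is set-valued.
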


Given a fixed formigram, the next results give a bound on the size of elementary refinements.

\begin{definition} Let \(\mathbb S:\mathcal P_\tau\to\cat{Set}\) be a formigram and let \(M=G(\mathbb S)\) be its associated merge graph. For a vertex \(v\in M\), define \[ b(v):= \begin{cases} 1,& \deg_l(v)=0,\\ 0,& \text{otherwise,} \end{cases} \qquad d(v):= \begin{cases} 1,& \deg_r(v)=0,\\ 0,& \text{otherwise,} \end{cases} \] and recall the merging and splitting multiplicities $\merge(v)=\deg^l(v)-1$ and $\mathrm{split} (v) = \deg^r(v)-1$. The \emph{total event multiplicity} of \(\mathbb S\) is \[ E(\mathbb S) := \sum_{v\in M} \bigl(b(v)+d(v)+m(v)+s(v)\bigr). \] Thus births and deaths contribute one, while a \(k\)-fold merge or split contributes \(k-1\). \end{definition}

\begin{proposition}\label{prop:elementary-refinement-bound}
Let \(\mathbb S:\mathcal P_\tau\to\cat{Set}\) be a formigram. Then \(\mathbb S\) admits an elementary refinement \[ \widetilde{\mathbb S}:\mathcal P_{\widetilde\tau}\to\cat{Set} \] such that \[ |\widetilde\tau| \leq |\tau|+E(\mathbb S). \] \end{proposition}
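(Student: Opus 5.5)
The plan is to reuse the explicit factorisation from the proof of Lemma~\ref{lem:elem-ref-always-exists} and count exactly how many intermediate levels it inserts. The goal is to charge each inserted level to a distinct unit of $E(\mathbb S)$: a birth, a death, or one unit of merging or splitting multiplicity.

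First fix a forward edge $i\to i+1$ of $\tau$ with structure map $f:S_i\to S_{i+1}$. The factorisation $S_i\twoheadrightarrow\cdots\twoheadrightarrow\im(f)\hookrightarrow\cdots\hookrightarrow S_{i+1}$ consists of $r+s$ elementary maps, where
\[
r=|S_i|-|\im(f)|=\sum_{v\in\im(f)}\bigl(|f^{-1}(v)|-1\bigr),\qquad s=|S_{i+1}\setminus\im(f)|.
\]
In the merge graph, $|f^{-1}(v)|=\ldeg(v)$ for $v\in S_{i+1}$. Hence $r=\sum_{v\in M_{i+1}}\merge(v)$, where vertices with $\ldeg(v)=0$ contribute $0$ under the natural convention. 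Likewise $s$ is the number of vertices $v\in M_{i+1}$ with $\ldeg(v)=0$, so $s=\sum_{v\in M_{i+1}}b(v)$. A chain of $r+s$ maps introduces $r+s-1\le r+s$ new intermediate levels. If $r+s=0$, the map is a bijection, which is already elementary, and nothing is inserted.

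A backward edge $i\leftarrow i+1$ is handled symmetrically with $f:S_{i+1}\to S_i$. Now fibres over $v\in S_i$ are its right-neighbours, so $r=\sum_{v\in M_i}\mathrm{split}(v)$, and the elements missed by $f$ are the vertices with $\rdeg(v)=0$, so $s=\sum_{v\in M_i}d(v)$. In both cases, each edge of $\tau$ is charged only to the left-side quantities ($b$, $\merge$) of the vertices on one side, or only to the right-side quantities ($d$, $\mathrm{split}$) of the vertices on one side. Each vertex has exactly one left edge and one right edge in $\tau$. Therefore no event is charged twice, and the total number of inserted levels is at most $E(\mathbb S)$. Since the original $|\tau|$ levels are kept at the indices $p_i$, this gives $|\widetilde\tau|\le|\tau|+E(\mathbb S)$.

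The step I expect to need the most care is the boundary condition $\widetilde S_1=\widetilde S_m=\emptyset$ in the definition of an elementary formigram, together with checking that the charging really is injective. If the ends must be emptied, I would pad with chains of single-element inclusions $\emptyset\hookrightarrow\cdots\hookrightarrow S_1$ and deaths $S_n\hookleftarrow\cdots\hookleftarrow\emptyset$. These add $|S_1|$ and $|S_n|$ levels. I would charge them to $b(v)$ for $v\in M_1$ and $d(v)$ for $v\in M_n$, which the paper declares to be births and deaths. These quantities are not consumed by the edge charging above, because level-$1$ vertices have no left edge and level-$n$ vertices have no right edge. The slack of $-1$ per nontrivial edge absorbs any off-by-one from the padding. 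Finally, I would verify that the concatenated compositions recover each $\Sb[e]$, so that $\widetilde{\mathbb S}$ is a refinement in the sense of the definition; this is immediate from the factorisation.
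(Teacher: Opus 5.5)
Your proposal is correct and follows the same route as the paper. The paper simply asserts that each birth or death is resolved by one elementary step and each $k$-fold merge or split by $k-1$ pairwise steps, each costing at most one level. Your per-edge count makes that rigorous: a forward edge $i\to i+1$ needs $\sum_{v\in M_{i+1}}(b(v)+\merge(v))$ elementary maps, a backward edge needs $\sum_{v\in M_i}(d(v)+\mathrm{split}(v))$, and each vertex's left-side and right-side quantities are charged at most once, which is the injectivity the paper leaves implicit.

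Your padding step handles the condition $\widetilde S_1=\widetilde S_m=\emptyset$, which the paper's proof does not address. It costs exactly $|S_1|+|S_n|$ levels, matching the otherwise uncharged $b$ on $M_1$ and $d$ on $M_n$, so no slack is needed. However, padding requires relaxing $1=p_1$ and $p_n=m$ in the definition of refinement, so the padded $\widetilde{\mathbb S}$ is not literally a refinement ``in the sense of the definition'' as you claim. This is a tension already present in the paper's definitions rather than a flaw in your argument.
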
 \begin{proof} Let \(M=G(\mathbb S)\). Each birth or death vertex of \(M\) can be resolved by inserting a single elementary birth or death. Similarly, if \(v\) is a \(k\)-fold merging vertex, then \(v\) can be resolved into \(k-1\) pairwise merges by inserting intermediate levels. The same argument applies to \(k\)-fold splitting vertices, which can be resolved into \(k-1\) pairwise splits. Performing this resolution for every non-elementary event of \(M\) produces a refinement \(\widetilde{\mathbb S}\) whose structure maps are elementary. Since each elementary event contributes at most one additional level, and since the original levels of \(\tau\) are retained, the refined zigzag type satisfies \[ |\widetilde\tau| \leq |\tau|+E(\mathbb S). \] \end{proof}

\paragraph{Algorithmic consequences.}
Proposition~\ref{prop:elementary-refinement-bound} provides a conceptual
reduction of the interval decomposition problem for arbitrary formigrams to the
elementary case: one may first construct an elementary refinement
\(
\widetilde{\mathbb S}
\)
and then apply the decomposition algorithm to
\(
k[\widetilde{\mathbb S}]
\).
By Lemma~\ref{lem:barcode-refinement-projection}, the barcode of
\(
k[\mathbb S]
\)
is recovered by projecting the barcode of
\(
k[\widetilde{\mathbb S}]
\)
back to the original index set. Explicitly materialising the refinement is, however, unnecessary in practice.
Instead, one may resolve non-elementary events on-the-fly. For example, a
\(k\)-fold merge may be replaced by an arbitrary sequence of \(k-1\) pairwise
merges, and similarly for splits. Any such choice determines an elementary
refinement and therefore yields the same projected barcode.

The complexity of the decomposition stage is therefore governed by the total
event multiplicity \(E(\mathbb S)\). Since
\(
|\widetilde\tau|
=
O\!\left(|\tau|+E(\mathbb S)\right),
\)
the interval decomposition can be computed in
\[
\mathcal{O}\!\left(
(|\tau|+E(\mathbb S))
\log(|\tau|+E(\mathbb S))
\right)
\]
time. Moreover, bijective structure maps do not affect the interval decomposition and
may be contracted beforehand in linear time. After this contraction, every remaining
transition contains at least one elementary event, so that the effective size
of the refined formigram is \(O(E(\mathbb S))\). Consequently, the
decomposition stage runs in
\begin{equation}
\mathcal{O}\!\left(E(\mathbb S)\log (E(\mathbb S))\right)    
\end{equation}
time.



\section{Dualities}\label{sec:dualities}

Zigzag persistence exhibits several dualities that can be exploited computationally. 
The first addresses whether the union and intersection constructions encode different topological information.

\subsection{Correspondence between Intersection and Union Construction}

Given a binary video $\mathcal V =(I_i)^n_{i=1}$, a natural question to ask is whether the barcodes of the union and intersection constructions\footnote{c.f. Definition~\ref{def:top-constr}} $\mathbb F^\cup_{\mathcal V}$ and $\mathbb F^\cap_{\mathcal V}$ (respectively $\mathbb B^\cup_{\mathcal V}$ and $\mathbb B^\cap_{\mathcal V}$) are related.

The \textit{Strong Diamond Principle} of~\cite{carlssonZigzagPersistence2010} answers this question in the strongest possible sense by showing that the union and intersection constructions contain equivalent information. We verify that the required hypotheses hold in our setting. Consider the following diagram of topological spaces:

\begin{center}
\begin{tikzcd}[row sep=1em, column sep=1.2em]
& & && A\cup B  && \\
X_1 \arrow[r,leftrightarrow] & \dots \arrow[r,leftrightarrow] & X_{k-2}\arrow[r,leftrightarrow] & A \arrow[ru] & & B \arrow[lu, swap] \arrow[r,leftrightarrow] & X_{k+2} \arrow[r,leftrightarrow] & \dots \arrow[r,leftrightarrow,] &   X_n\\
&& & &  \arrow[lu] A\cap B \arrow[ru, swap]  &&
\end{tikzcd}
\end{center}
where each space is the geometric realisation of a subcomplex of a common cubical complex $K$ (e.g. $F_I$ and $B_I$). Let $\mathbb X^+$ and $\mathbb X^+$  denote the following topological diagrams that differ at a single entry
\begin{align*}
&\mathbb X ^+ :   X_1 \leftrightarrow \dots  \leftrightarrow X_{k-2} \leftrightarrow  A \to A\cup B \leftarrow B\leftrightarrow X_{k+2} \leftrightarrow  \dots\leftrightarrow X_n  \\
&\mathbb X ^- :   X_1 \leftrightarrow \dots  \leftrightarrow X_{k-2} \leftrightarrow  A \leftarrow A\cap B \to B\leftrightarrow X_{k+2} \leftrightarrow  \dots\leftrightarrow X_n. 
\end{align*}
Since both $A$ and $B$ are geometric realisations of subcomplexes of $K$, it follows that
$$ A\cup B \quad \text{ and } \quad  A\cap B $$ are again geometric realisations of subcomplexes of $K$, and that unions and intersections are compatible with the cubical structure. In particular, at the level of cubical chain complexes, we obtain 
$$ C_\bullet (A\cup B) = C_\bullet(A)+ C_\bullet (B), \qquad C_\bullet(A\cap B) = C_\bullet (A)\cap C_\bullet(B), $$
and hence a short exact sequence 
$$0\to C_\bullet (A\cap B) \to C_\bullet (A) \oplus C_\bullet (B) \to   C_\bullet (A\cup B)\to 0.$$
This short exact sequence induces the Mayer--Vietoris long exact sequence in homology. 
$$ \dots \to H_l(A\cup B) \to  H_l(A)\oplus H_l(B) \to H_l(A\cup B) \to \dots $$
We are thus firmly placed in a setting where the strong diamond principle of~\cite{carlssonZigzagPersistence2010} holds.

\begin{theorem}[{The Strong Diamond Principle~\cite[Thm. 5.9]{carlssonZigzagPersistence2010}}]
 Given $\mathbb X^+$ and $\mathbb X^-$ as above, there is a (complete) bijection between the multisets $\mathrm{Bar}(H_\bullet (\mathbb X^+))$ and $\mathrm{Bar}(H_\bullet (\mathbb X^-))$. Intervals are matched as follows:
 \begin{itemize}
 \item $[k,k]\in\mathrm{Bar}(H_{l+1}(\mathbb X^+))$ is matched with $(k,k)\in\mathrm{Bar}(H_l(\mathbb X^-))$.
 \end{itemize}
 In the remaining cases, the matchings preserve the homological degree:

\begin{center}
\renewcommand{\arraystretch}{1.3}
\begin{tabular}{|c c c|}
\hline
$\mathrm{Bar}(H_\ell(\mathbb X^+))$ 
& 
& 
$\mathrm{Bar}(H_\ell(\mathbb X^-))$
\\
\hline
$\langle b,k-1), \; \text{for } b \leq k-1$
& $\leftrightarrow$
& $\langle b,k)$
\\
$\langle b,k], \; \text{for } b \leq k-1$
& $\leftrightarrow$
& $\langle b,k-1]$
\\
$[k,d\rangle , \; \text{for } d \geq k+1$
& $\leftrightarrow$
& $[k+1,d\rangle$
\\
$(k+1,d\rangle , \; \text{for } d \geq k+1$
& $\leftrightarrow$
& $(k,d\rangle $
\\
$\langle b,d\rangle \; \text{in all other cases}$
& $\leftrightarrow$
& $\langle b,d\rangle$
\\
\hline
\end{tabular}
\end{center}
where the endpoint decorations of the bars were defined in Section~\ref{sec:dec-barcode}.
 
\end{theorem}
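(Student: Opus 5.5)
The plan is to follow the Mayer--Vietoris diamond argument of Carlsson--de Silva, using the short exact sequence $0\to C_\bullet(A\cap B)\to C_\bullet(A)\oplus C_\bullet(B)\to C_\bullet(A\cup B)\to 0$ established just above. I will first reduce to a statement about interval decompositions that are compatible across the diamond. Concretely, I form the commutative diamond of $H_\ell$-modules $H_\ell(A\cap B)\to H_\ell(A), H_\ell(B)\to H_\ell(A\cup B)$ and glue it to the common outer parts $X_1\leftrightarrow\dots\leftrightarrow X_{k-2}$ and $X_{k+2}\leftrightarrow\dots\leftrightarrow X_n$. This gives a representation of a quiver containing a single commutative square. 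The point is that $H_\bullet(\mathbb X^+)$ and $H_\bullet(\mathbb X^-)$ are its two restrictions, to the upper and to the lower zigzag path.

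The key algebraic input is exactness of the Mayer--Vietoris sequence at the middle term. It says that the square is \emph{exact}: the kernel of $H_\ell(A)\oplus H_\ell(B)\to H_\ell(A\cup B)$ equals the image of $H_\ell(A\cap B)$. The connecting map $H_{\ell+1}(A\cup B)\to H_\ell(A\cap B)$ accounts for the failure of injectivity and surjectivity.

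I would then prove the matching by comparing rank data, rather than by constructing explicit bases. By Theorem~\ref{thm:gabriel} and Theorem~\ref{thm:unique-decomp}, each barcode is determined by its multiplicities. The plan has three steps.

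First, I show that intervals not touching the columns $k-1,k,k+1$ are counted identically on both sides. These multiplicities are computed from restrictions to subzigzags that avoid the middle vertex, or that pass through it only via a composite $A\to\cdots\to B$. Exactness of the square forces such composites through $A\cup B$ and through $A\cap B$ to have equal rank behaviour: for instance, $\mathrm{rank}(H(A)\to H(A\cup B)\leftarrow H(B))$ as a limit/colimit comparison agrees with the corresponding quantity through $A\cap B$. This can be phrased with the generalised rank invariant of Lemma~\ref{lem:rank-total}, restricted to intervals containing both $k-1$ and $k+1$.

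Second, I treat the endpoint shifts in the table. A bar ending at $A$ in $\mathbb X^+$ (either $\langle b,k-1)$, i.e.\ killed by $A\to A\cup B$, or surviving into $A\cup B$ as $\langle b,k]$) corresponds to classes of $H_\ell(A)$ mapping to zero in $H_\ell(A\cup B)$ or not. Exactness identifies the classes killed in $A\cup B$ with those lifting to $A\cap B$ but not extending to $B$. This gives $\langle b,k)$ on the $-$ side, and symmetrically for the other three rows.

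Third, I handle the degree shift: $[k,k]$ in $H_{\ell+1}(\mathbb X^+)$ is a class in $H_{\ell+1}(A\cup B)$ not in the image from $A$ or $B$. The connecting homomorphism maps it injectively, modulo $\mathrm{im}\,H_{\ell+1}(A)\oplus H_{\ell+1}(B)$, onto $\ker(H_\ell(A\cap B)\to H_\ell(A)\oplus H_\ell(B))$, which is exactly the multiplicity of $(k,k)$ in $H_\ell(\mathbb X^-)$. This is a dimension count from the long exact sequence.

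The main obstacle is making the first two steps rigorous, because the outer parts of the zigzag interact with the diamond. A bar's identity depends on global compatibility, not just local kernels and images. Here I would first try the inductive \enquote{pyramid}-style argument of~\cite{carlssonZigzagPersistence2010}: decompose the diamond representation of the exact square, show that its indecomposables restrict on both paths to intervals related as in the table, and check case by case that exact-square indecomposables are exactly the listed types. Since this is precisely \cite[Thm.~5.9]{carlssonZigzagPersistence2010}, whose only hypothesis is the Mayer--Vietoris exactness verified above, I would ultimately cite it once the hypothesis has been checked, as the surrounding text indicates.
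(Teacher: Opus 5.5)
Your proposal is correct and ends up exactly where the paper does. The paper gives no independent proof: it only observes that $A$ and $B$ are subcomplexes of a common cubical complex, so that $0\to C_\bullet(A\cap B)\to C_\bullet(A)\oplus C_\bullet(B)\to C_\bullet(A\cup B)\to 0$ is exact and yields the Mayer--Vietoris long exact sequence, and then invokes \cite[Thm.~5.9]{carlssonZigzagPersistence2010}. Your three-step sketch is therefore not needed; as you suspect, the rank comparison in Step~1 cannot by itself settle intervals with an endpoint at $k$, where the generalised ranks of $\mathbb X^+$ and $\mathbb X^-$ genuinely differ. The cited proof handles those cases via BGP reflection functors at vertex $k$ rather than a pyramid argument, with the connecting homomorphism supplying the $[k,k]\leftrightarrow(k,k)$ matching just as in your Step~3.
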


\begin{corollary}\label{cor:matchings}
Let $\mathcal V = (I_i)_{i=1}^n$ be a binary video. There exist canonical complete matchings
\[
\mathrm{Bar}(H_*(\mathbb F^\cup_{\mathcal V}))
\leftrightarrow
\mathrm{Bar}(H_*(\mathbb F^\cap_{\mathcal V})),
\qquad
\mathrm{Bar}(H_*(\mathbb B^\cup_{\mathcal V}))
\leftrightarrow
\mathrm{Bar}(H_*(\mathbb B^\cap_{\mathcal V})).
\]
\end{corollary}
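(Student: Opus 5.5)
The plan is to pass from $\mathbb F^\cup_{\mathcal V}$ to $\mathbb F^\cap_{\mathcal V}$ by a finite sequence of single-diamond moves, applying the Strong Diamond Principle at each move and composing the resulting bijections. The argument for $\mathbb B^\cup_{\mathcal V}$ and $\mathbb B^\cap_{\mathcal V}$ is identical, with the roles of $\vee$ and $\wedge$ exchanged as in Lemma~\ref{lem:inter-uni}.

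For $0\le r\le n-1$, let $\mathbb Y_r$ be the topological diagram over the $2n-1$ levels that uses intersection frames $F_{I_i\wedge I_{i+1}}$ in the first $r$ gaps and union frames $F_{I_i\vee I_{i+1}}$ in the remaining gaps. Then $\mathbb Y_0=\mathbb F^\cup_{\mathcal V}$ and $\mathbb Y_{n-1}=\mathbb F^\cap_{\mathcal V}$. Consecutive diagrams $\mathbb Y_{r-1}$ and $\mathbb Y_r$ differ only at position $k=2r$. There, $\mathbb Y_{r-1}$ has $A\to A\cup B\leftarrow B$ and $\mathbb Y_r$ has $A\leftarrow A\cap B\to B$, with $A=F_{I_r}$ and $B=F_{I_{r+1}}$. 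By Lemma~\ref{lem:inter-uni}, $A\cup B=F_{I_r\vee I_{r+1}}$ and $A\cap B=F_{I_r\wedge I_{r+1}}$. All other entries and maps agree, and every space is the realisation of a subcomplex of the same complex ($K$ or $K^*$, depending on $\kappa$). So $(\mathbb Y_{r-1},\mathbb Y_r)$ is exactly a pair $(\mathbb X^+,\mathbb X^-)$ as in the setup preceding the theorem. The Mayer--Vietoris hypothesis has already been verified there for subcomplexes of a common cubical complex.

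The Strong Diamond Principle then gives an explicit complete bijection $\mathrm{Bar}(H_*(\mathbb Y_{r-1}))\leftrightarrow\mathrm{Bar}(H_*(\mathbb Y_r))$. Composing these $n-1$ bijections yields the complete matching $\mathrm{Bar}(H_*(\mathbb F^\cup_{\mathcal V}))\leftrightarrow\mathrm{Bar}(H_*(\mathbb F^\cap_{\mathcal V}))$. Each step is given by the fixed rule table, so it depends on no choices. In that sense the composite is canonical, up to the order in which the diamonds are flipped.

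The main obstacle is this canonicity claim. One should check that flipping the diamonds in a different order gives the same composite matching. The diamonds at positions $2r$ and $2r'$ with $r\neq r'$ are disjoint, so I would show that the matching rules commute. A bar's endpoint is moved by the table at position $2r$ only if the endpoint lies within one level of $2r$. Distinct diamonds are separated by an original frame, so the endpoints affected by the two rules lie in disjoint index ranges.

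Two edge cases need separate checks. The first is bars lying entirely at an original frame, such as $[2r+1,2r+1]$ when diamonds sit on both sides. The second is the degree-shifting matches $[k,k]\leftrightarrow(k,k)$. For these I would check directly that applying the two tables in either order sends the bar to the same place. If this proves delicate, I would define canonicity as the composite along the left-to-right order and note that the composite is determined by the construction.
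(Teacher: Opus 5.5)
Your route is the paper's route: one application of the Strong Diamond Principle per interpolation index. Your check that the $n-1$ moves commute is fine. At a shared original frame $2r+1$, the diamond at $2r$ only moves left endpoints and the diamond at $2r+2$ only moves right endpoints, which also covers your edge case $[2r+1,2r+1]$.

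The genuine gap is the step ``By Lemma~\ref{lem:inter-uni}, $A\cup B=F_{I_r\vee I_{r+1}}$ and $A\cap B=F_{I_r\wedge I_{r+1}}$''. That lemma holds only for unions of $T$-constructions and intersections of $V$-constructions.
\begin{itemize}
\item $V_I(c)$ is the \emph{full} subcomplex of $K^*$ on $I^{-1}(c)$. So $V_{I_1\vee I_2}(1)$ contains every edge joining a pixel of $I_1^{-1}(1)\setminus I_2^{-1}(1)$ to an adjacent pixel of $I_2^{-1}(1)\setminus I_1^{-1}(1)$, and such an edge lies in neither $V_{I_1}(1)$ nor $V_{I_2}(1)$.
\item Dually, the closed squares of two adjacent pixels share an edge or a vertex. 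So $T_{I_1}(1)\cap T_{I_2}(1)$ can be strictly larger than $T_{I_1\wedge I_2}(1)$.
\end{itemize}
Consequently, for $\kappa=8$ the diagrams $\mathbb F^\cap_{\mathcal V}$ and $\mathbb B^\cup_{\mathcal V}$ are not genuine intersection or union constructions of the frames. For $\kappa=4$ the same holds for $\mathbb F^\cup_{\mathcal V}$ and $\mathbb B^\cap_{\mathcal V}$. In those cases your pair $(\mathbb Y_{r-1},\mathbb Y_r)$ is not a Mayer--Vietoris diamond, and the Strong Diamond Principle does not apply.

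No argument can close this gap, because the statement fails as written. Take $n=2$ on $\Omega_{2,1}=\{p,q\}$ with $p=(1,1)$, $q=(2,1)$, $I_1^{-1}(1)=\{p\}$ and $I_2^{-1}(1)=\{q\}$, for either $\kappa$. Then $F_{I_1\vee I_2}$ is connected: it is the rectangle $Q_{1,1}\cup Q_{2,1}$ if $\kappa=8$, and the edge $pq$ of $K^*$ if $\kappa=4$. Also $F_{I_1\wedge I_2}=\emptyset$, and all higher homology vanishes.
\begin{itemize}
\item $H_*(\mathbb F^\cup_{\mathcal V})$ is $k\to k\leftarrow k$ with isomorphisms, giving a single bar $\langle 1,3\rangle$.
\item $H_*(\mathbb F^\cap_{\mathcal V})$ is $k\leftarrow 0\to k$, giving two bars $\langle 1,1]$ and $[3,3\rangle$.
\end{itemize}
One bar cannot be bijectively matched with two. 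Complementing both images breaks the $\mathbb B$-pair in the same way. Placing the two pixels inside larger, even padded, frames gives the same discrepancy.

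With the genuine spaces, the principle behaves exactly as predicted:
\begin{itemize}
\item For $\kappa=4$, $F_{I_1}\cup F_{I_2}$ is two points, giving $\langle 1,2]$ and $[2,3\rangle$, which match $\langle 1,1]$ and $[3,3\rangle$.
\item For $\kappa=8$, $F_{I_1}\cap F_{I_2}$ is the shared edge, giving $\langle 1,3\rangle$ on both sides.
\end{itemize}

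The paper's one-line proof (``we are in the above setting'') rests on exactly the same identification. What your argument actually proves is the corrected statement, in which the inserted spaces are the subcomplexes $F_{I_i}\cup F_{I_{i+1}}$ and $F_{I_i}\cap F_{I_{i+1}}$, and likewise for $B$. These coincide with the paper's diagrams only for $\mathbb F^\cup_{\mathcal V}$ and $\mathbb B^\cap_{\mathcal V}$ when $\kappa=8$, and for $\mathbb F^\cap_{\mathcal V}$ and $\mathbb B^\cup_{\mathcal V}$ when $\kappa=4$.
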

\begin{proof}
 Since all topological spaces involved are of the form $F_I$ or $B_I$ for some binary image $I$, and these are all geometric realisations of some subcomplex of either $K$ or $K^*$ (see Definition~\ref{def:top-real}), we are in the above setting. Applying the strong diamond principle $n-1$ times, once for each interpolation image index, yields the result.
\end{proof}

For an illustration of how the bars are matched through Corollary~\ref{cor:matchings}, we refer to~\cite[Fig. 7]{carlssonZigzagPersistence2010}.

\subsection{Alexander Duality: Accelerating $H_1$-Zigzag Persistence} \label{sec:alex-duality}

So far, this work has focused on the efficient computation of $H_0$-zigzag persistence. One might therefore wonder whether the computation of $H_1$-zigzag persistence for binary videos can also be accelerated. In general, detecting non-contractible cycles is a difficult global problem that cannot be reduced to simple local adjacency relations in the same way as $H_0$-persistence. Making the computation of $H_1$ computationally effective is usually far from straightforward.

\begin{wrapfigure}{r}{0.45\textwidth}
    \vspace{-20pt}
    \centering
    \includegraphics[width=0.9\linewidth]{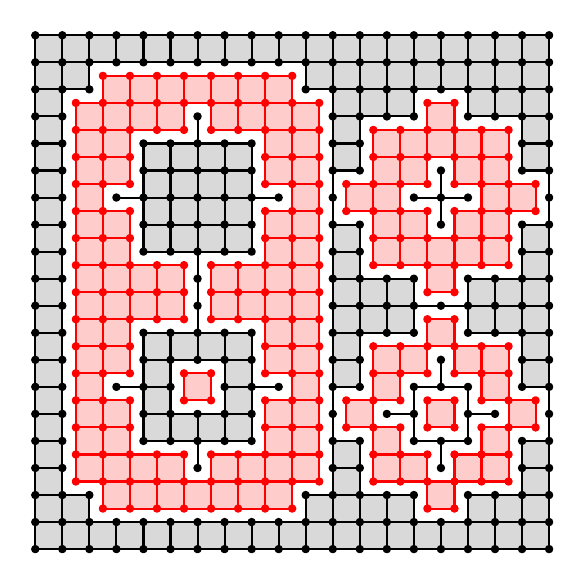}
    \label{fig:nested}
    \vspace{-10pt}
\end{wrapfigure}

One of the first intuitions encountered in algebraic topology is that $H_1$ ``counts holes”. For binary images, this intuition admits a particularly concrete geometric interpretation: a non-contractible loop of one colour encloses a connected component of the opposite colour. This suggests that non-contractible cycles in the foreground should correspond to connected components of the complementary background. In other words, one would like to detect holes simply by studying connected components of the complement, which, as we have seen, can be done in linear time in the number of pixels.

This points towards a relation between $H_0$ and $H_1$ of the two colours. Formulating this correspondence precisely requires some care. Under suitable assumptions on a binary image $I$, which we now make explicit, the authors of~\cite{MR4454782} establish such a relationship in the setting of regular persistence and grey-scale filtrations of binary images.

\begin{definition}[Padded image] \label{def:padded}Let \(I:\Omega_{w,h}\to \{ 0,1\}\) be a binary image. The \emph{padding} of \(I\) is the binary image \( I^\infty: \Omega_{w+2,h+2} \to \{0,1\} \) obtained by adjoining a one-pixel-wide border of foreground pixels around \(I\), and choosing the same foreground connectivity as that of $I$. That is
$$I^\infty(i,j) = \begin{cases}
    1 &\text{if $i\in\{1,w+2 \}$ or $j\in\{1,h+2 \} $}, \\
 I(i-1,j-1) & \text{else.}
\end{cases} $$
\end{definition}

\begin{remark}
For simplicity of exposition, we restrict throughout to padding by foreground pixels. Nevertheless, all subsequent statements admit an analogous formulation for padding with background pixels. Moreover, for ease of notation, we fix a field $k$ and suppress it from our notation of homology and cohomology in this section. All results will be independent of $k$.
\end{remark}

The combinatorial correspondence established in~\cite{MR4454782} admits the following topological formulation. 

\begin{theorem}[Alexander Duality for Binary Images]\label{thm:alexander-duality}
Let $I:\Omega_{w,h}\to \{0,1\}$ be a binary image with foreground connectivity $\kappa\in\{4,8\}$ and background connectivity $\bar\kappa=12-\kappa$. Then
$$\widetilde H_0(F_{I^\infty}) \cong H^1(B_{I^\infty}), \quad \text{and} \quad 
H_1(F_{I^\infty}) \cong H^0(B_{I^\infty}).$$
Moreover, these isomorphisms are natural with respect to inclusions.    
\end{theorem}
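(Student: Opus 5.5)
The plan is to derive both isomorphisms from classical Alexander duality in $S^2=\mathbb{R}^2\cup\{\infty\}$, applied to the compact polyhedron $F\coloneq F_{I^\infty}$. The first step is geometric. With $\kappa=8$, I would show that $\mathbb{R}^2\setminus F_{I^\infty}$ deformation retracts onto $B_{I^\infty}$ plus one unbounded component. With $\kappa=4$, I would check the same statement with the roles of $T$ and $V$ exchanged. The padding is what makes this work: it guarantees that the outer frame of foreground pixels separates every background pixel from the unbounded region. Concretely, for $\kappa=8$ each background pixel $(i,j)$ is a vertex of $K^*$, i.e.\ the centre of the open square $\mathrm{int}(Q_{i,j})$. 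The complement of $|T_{I^\infty}(1)|$ inside the padded box is the union of these open squares together with the open edges and vertices of $K$ that are not in the closure of any foreground square. This region retracts onto the union of the $K^*$-cells whose vertices are all background, namely $V_{I^\infty}(0)$. Diagonal background neighbours do not connect: two such neighbours meet only at a corner of $K$, and that corner is covered by the two foreground squares. This is the (8,4) complementarity. I would verify the retraction locally, cell by cell, using the standard dual-cell/barycentric argument familiar from \cite{MR4454782}.

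Given this, write $U\coloneq S^2\setminus F$. Then $U\simeq B_{I^\infty}\sqcup D$, where $D$ is the contractible exterior component containing $\infty$; $D$ is contractible because $S^2$ minus a closed region containing a disk is an open disk. Alexander duality in $S^2$ gives $\widetilde H_q(U)\cong \check H^{1-q}(F)$, and symmetrically $\widetilde H_q(F)\cong \check H^{1-q}(U^c)$. Since $F$ is a finite polyhedron, Čech and singular cohomology agree. For $q=0$ this reads $\widetilde H_0(U)\cong H^1(F)$, and $\widetilde H_0(U)\cong H_0(B_{I^\infty})$ because the extra component $D$ is exactly killed by the reduction. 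Dualising over the field $k$ then gives $H^0(B_{I^\infty})\cong H_1(F)$, which is the second isomorphism.

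For the first isomorphism I would exchange roles. The same retraction argument shows that $S^2\setminus B_{I^\infty}$ is homotopy equivalent to $F$ after adjoining the point at infinity. Since $F$ contains the padding frame, the exterior glues onto $F$ and adds no new component, so $S^2\setminus B\simeq F$ via a retraction of a thickening. Alexander duality then yields $\widetilde H_0(S^2\setminus B)\cong H^1(B)$, that is, $\widetilde H_0(F_{I^\infty})\cong H^1(B_{I^\infty})$.

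For naturality, take an inclusion $F_{I}\subseteq F_{I'}$ of padded foregrounds; this corresponds to $B_{I'}\subseteq B_I$. Alexander duality is natural for inclusions of compact pairs, realised through the duality isomorphisms $H^{p}(F)\cong H_{2-p}(S^2,S^2\setminus F)$ together with the long exact sequence of the pair. The retractions above are compatible with inclusions because they are defined cellwise. Composing these, the isomorphisms are natural, with the direction of maps reversed appropriately between the homology and cohomology sides.

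The main obstacle I expect is the first step: constructing rigorously, and compatibly with inclusions, the homotopy equivalence between the open complement $\mathbb{R}^2\setminus F$ (minus the exterior) and the dual cubical complex $B$ in both connectivity conventions. I would attempt this with an explicit piecewise-linear deformation on each closed square of $K$, checking the sixteen local configurations of a $2\times 2$ pixel block. Once that is in place, the rest is standard Alexander duality.
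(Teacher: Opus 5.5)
Your proposal is correct in substance. For $\widetilde H_0(F)\cong H^1(B)$ it follows the paper's argument; for $H_1(F)\cong H^0(B)$ you take a different route.

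The paper glues the exterior cell $\sigma^\infty$ onto the foreground, so that $T_{I^\infty}(1)\cup_{\partial|K|}\sigma^\infty$ and $V_{I^\infty}(0)$ are complementary dual subcomplexes of $K^\infty\cong S^2$. It then applies Alexander duality only to $B$, in degrees $0$ and $1$. In degree one this gives $H_1(F\cup_{\partial|K|}\sigma^\infty)\cong\widetilde H^0(B)$. The paper then ``undoes the compactification'': removing $\sigma^\infty$ frees the class $[\partial|K|]$ exactly when $B\neq\emptyset$, matching the passage from $\widetilde H^0(B)$ to $H^0(B)$.

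You instead apply duality to $F$ itself, and the exterior disc $D$ absorbs the reduction automatically: $H_0(B)\cong\widetilde H_0(S^2\setminus F)\cong H^1(F)$, followed by dualising over $k$. The gain is that every identification is induced by an inclusion, namely $B\hookrightarrow S^2\setminus F$ taken relative to $\infty$. Naturality of $H_1(F)\cong H^0(B)$ is then immediate. The paper's final step, by contrast, needs the extra summand $k\cdot[\partial|K|]$ to match the augmentation summand of $H^0(B)$ compatibly with inclusions, including the case where the background becomes empty. The paper does not spell this out.

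You also do not need your second retraction. The degree-one case of the same duality, $H_1(B)\cong H_1(S^2\setminus F)\cong\widetilde H^0(F)$, dualises directly to $\widetilde H_0(F)\cong H^1(B)$.

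One claim in your first-isomorphism paragraph is false: $S^2\setminus B_{I^\infty}$ is not homotopy equivalent to $F$. Gluing the exterior disc along $\partial|K|\subseteq F$ gives $S^2\setminus B\simeq F\cup_{\partial|K|}\sigma^\infty$, which kills $[\partial|K|]$. That class is nonzero in $H_1(F)$ whenever $B\neq\emptyset$. For an all-background image, for instance, $F$ is the frame annulus while $S^2\setminus B$ is an open disc.

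Since you only use $\widetilde H_0$, and the inclusion $F\hookrightarrow S^2\setminus B$ is a bijection on path components, the conclusion survives once you weaken the claim to that.

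Two smaller slips, neither of which is used:
\begin{itemize}
\item The right-hand side of your duality formula should be reduced \v{C}ech cohomology.
\item The ``symmetric'' formula $\widetilde H_q(F)\cong\check H^{1-q}(U^c)$ is garbled, since $U^c=F$.
\end{itemize}
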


\begin{proof}
As the result is essentially contained in~\cite{MR4454782}, we only show how to translate it into our topological setting in Appendix~\ref{app:alex-dual}. The key idea is that the padded image can be compactified to a cubical decomposition of \(S^2\), in which the foreground and background become complementary dual complexes. The claim then follows from standard Alexander duality~\cite[Thm 71.1]{Munkres}.

\end{proof}

\begin{remark}
The asymmetry in the appearance of reduced homology in the statement of Theorem~\ref{thm:alexander-duality} stems from our asymmetric choice of padding with foreground pixels in Definition~\ref{def:padded}. 
\end{remark}

\begin{remark}
Alexander duality has found numerous applications in topological data analysis, where it is frequently exploited to relate topological features of a space to those of its complement. Next to its application to binary images~\cite{MR4454782, garinDualityPersistentHomology2020}, it has been used in~\cite{deyComputingHeightPersistence2018,edelsbrunnerComputationalTopologyIntroduction2010,hartvigsenAllpairsMinCut1994, lenzenPersistentCycleRepresentatives2025, stuckiEfficientBettiMatching2024}. Moreover, it has been implemented in Cubical Ripser~\cite{Kaji2020CubicalRS} to perform $H_2$ computations using $H_0$.
\end{remark}

At the level of zigzag modules arising from binary videos, this has the following deep consequence.

\begin{theorem} \label{thm:dual-barcodes-alexander}
Let $\mathcal V = (I_i)_{i=1}^n$ be a binary video, and assume that each image $I_i$ is padded by a layer of foreground pixels in the sense of Definition~\ref{def:padded}. Then the following barcode correspondences hold:
\[
\begin{array}{l@{\hspace{1cm}} l}
\mathrm{Bar}( H_1(\mathbb B_\mathcal V^\cap))
\xleftrightarrow{\mathrm{rev}}
\mathrm{Bar}( H_0(\mathbb F_\mathcal V^\cup)) \setminus \langle 1,n\rangle
&
\mathrm{Bar}( H_1(\mathbb B_\mathcal V^\cup))
\xleftrightarrow{\mathrm{rev}}
\mathrm{Bar}( H_0(\mathbb F_\mathcal V^\cap)) \setminus \langle 1,n\rangle
\\[1em]
\mathrm{Bar}( H_0(\mathbb B_\mathcal V^\cap))
\xleftrightarrow{\mathrm{rev}}
\mathrm{Bar}( H_1(\mathbb F_\mathcal V^\cup))
&
\mathrm{Bar}( H_0(\mathbb B_\mathcal V^\cup))
\xleftrightarrow{\mathrm{rev}}
\mathrm{Bar}( H_1(\mathbb F_\mathcal V^\cap)).
\end{array}
\]
Here $\xleftrightarrow{\mathrm{rev}}$ denotes a bijective correspondence between intervals obtained by reversing the endpoint decorations while leaving the endpoints unchanged.
\end{theorem}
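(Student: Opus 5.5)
The plan is to apply Theorem~\ref{thm:alexander-duality} levelwise to the diagrams of Definition~\ref{def:top-constr}. The key observation is that contravariance of cohomology reverses every arrow, which is exactly what swaps open and closed endpoint decorations. We treat the first pair; the others are identical after exchanging $\cup$ and $\cap$ and the roles of $H_0$ and $H_1$.

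\textbf{Step 1 (levelwise duality).} By Lemma~\ref{lem:inter-uni}, the spaces in $\mathbb F^\cup_{\mathcal V}$ and $\mathbb B^\cap_{\mathcal V}$ come from the same images, namely $I_i$ and $I_i\vee I_{i+1}$. Padding commutes with $\vee$ and $\wedge$, since the border is foreground in every frame. Hence at level $t$ we have $F_t=F_{J^\infty}$ and $B_t=B_{J^\infty}$ for a single padded image $J$.

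Moreover, the structure maps point in opposite directions. Whenever $F_t\hookrightarrow F_{t+1}$, we have $B_t\hookleftarrow B_{t+1}$, because a larger foreground means a smaller background.

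Theorem~\ref{thm:alexander-duality}, including its naturality with respect to inclusions, then gives levelwise isomorphisms
\[
H_1(B_t)\cong \widetilde H^0(F_t)\ \text{(applied with colours swapped)},\qquad \widetilde H_0(F_t)\cong H^1(B_t).
\]
These commute with the maps induced by inclusions. We obtain an isomorphism of zigzag modules $\widetilde H_0(\mathbb F^\cup_{\mathcal V})\cong H^1(\mathbb B^\cap_{\mathcal V})$. Here the right-hand side is a zigzag module of the \emph{same} type $\tau$, since cohomology reverses the already reversed arrows of $\mathbb B^\cap_{\mathcal V}$.

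\textbf{Step 2 (cohomology versus homology).} Over a field, $H^1(X)\cong H_1(X)^*$ naturally, by the universal coefficient theorem. So $H^1(\mathbb B^\cap_{\mathcal V})$ is the pointwise dual of $H_1(\mathbb B^\cap_{\mathcal V})$, which is a module over the opposite type $\tau^{\mathrm{op}}$.

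Dualisation is an exact contravariant functor and sends interval modules to interval modules with the same support. Therefore $\mathrm{Bar}(V^*)$ equals $\mathrm{Bar}(V)$ as a set of index pairs. However, every arrow is reversed, so each decoration changes: an endpoint that was closed (a cokernel class) becomes open, and vice versa. This is exactly the correspondence $\xleftrightarrow{\mathrm{rev}}$ for the type $\tau$.

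\textbf{Step 3 (reduced homology).} The reduced module satisfies $H_0(\mathbb F^\cup_{\mathcal V})\cong\widetilde H_0(\mathbb F^\cup_{\mathcal V})\oplus \mathbb I_\tau\langle 1,n\rangle$. The extra summand corresponds to the component containing the padding border, which persists through all levels and is compatible with all inclusions. Removing it gives the term $\setminus\langle 1,n\rangle$. The second row has no such correction, because there $H_1(F_t)\cong H^0(B_t)$ is unreduced.

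\textbf{Main obstacle.} The main obstacle is the careful verification in Step 1. We must check that the naturality in Theorem~\ref{thm:alexander-duality} holds for the specific inclusions $B_{I\vee J}\hookrightarrow B_I$, with the correct variance. We must also check that padding is compatible with interpolation frames, so that the border component really is a single $\langle 1,n\rangle$ summand that splits off. For the splitting, I would exhibit the constant section given by the border component, and a retraction given by the augmentation map $H_0\to k$.
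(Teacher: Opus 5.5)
Your argument is correct. For the left column it coincides with the paper's proof: levelwise Alexander duality with naturality gives $\widetilde H_0(\mathbb F^\cup_{\mathcal V})\cong H^1(\mathbb B^\cap_{\mathcal V})$ and $H_1(\mathbb F^\cup_{\mathcal V})\cong H^0(\mathbb B^\cap_{\mathcal V})$. Universal coefficients identify these with pointwise duals of modules over the opposite type, dualisation keeps endpoints and flips decorations, and the border component accounts for the extra full bar.

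The difference is in the right column. The paper does not rerun the duality argument there. It deduces the right-hand correspondences from the left-hand ones using the union/intersection matchings of Corollary~\ref{cor:matchings}, i.e.\ the Strong Diamond Principle. You instead observe that $\mathbb F^\cap_{\mathcal V}$ and $\mathbb B^\cup_{\mathcal V}$ are again built levelwise from the same padded images ($I_i$ and $I_i\wedge I_{i+1}$) with opposite arrows, so the identical argument applies verbatim.

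Your route is uniform and self-contained. It also avoids bookkeeping that the paper's ``follow immediately'' glosses over. The diamond matchings shift endpoints at every diamond and send $[k,k]$ bars of $H_1$ to $(k,k)$ bars of $H_0$. So obtaining, say, the $H_1(\mathbb B^\cup_{\mathcal V})$ versus $H_0(\mathbb F^\cap_{\mathcal V})$ correspondence that way needs both rows of the left column at once. It also needs a check that the shifts on the $\mathbb F$-side (union to intersection) and the $\mathbb B$-side (intersection to union) are compatible with decoration reversal. The paper's route, in exchange, reuses an already established result and makes explicit that the union and intersection versions carry equivalent information.

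Your explicit splitting of the border bar, via the constant border section and the augmentation retraction, is more careful than the paper's one-line justification.

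One small point: the aside $H_1(B_t)\cong\widetilde H^0(F_t)$ ``with colours swapped'' is not literally an instance of Theorem~\ref{thm:alexander-duality}, since the foreground padding breaks the colour symmetry. It does hold by Alexander duality on the compactified sphere, but you never use it, so you can simply drop it.
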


\begin{proof}
Assuming that the equalities on the left side hold, the equalities on the right follow immediately by the canonical complete matchings between the barcodes of the intersection and union construction of Corollary~\ref{cor:matchings}. 

We first show that $\mathrm{Bar}( H_1(\mathbb B_\mathcal V^ \cap)) = \mathrm{Bar}( H_0(\mathbb F_\mathcal V^ \cup)) \setminus \langle 1,n\rangle$ holds. Recall that the topological diagrams $\mathbb{F}^\cup_\mathcal{V}$ and $\mathbb{B}^\cap_\mathcal{V}$ were defined as follows
\begin{center}
\begin{tikzcd}[row sep=2pt]
\mathbb{F}^\cup_\mathcal{V}: & F_{I_1} \arrow[r, hook] & F_{I_1\vee I_2} & \arrow[l, hook'] F_{I_2} \arrow[r,hook] & \dots \arrow[r,hook]&  F_{I_{n-1}\vee I_n}  &
\arrow[l, hook'] F_{I_n}\\
\mathbb{B}^\cap_\mathcal{V}: & B_{I_1}  & \arrow[l,hook'] B_{I_1\vee I_2} \arrow[r, hook] &  B_{I_2}  & \arrow[l, hook'] \dots & \arrow[l,hook']  B_{I_{n-1}\vee I_n} \arrow[r, hook] &
B_{I_n}
\end{tikzcd}
\end{center}
As the isomorphisms of Theorem~\ref{thm:alexander-duality} are natural with respect to inclusions, they assemble into the following commutative diagram of vector spaces

\vspace{-0.5cm}
\begin{center}
\begin{tikzcd}
\widetilde H_0 (F_{I_1}) \arrow[d]\arrow[r] & \widetilde H_0 (F_{I_1\vee I_2}) \arrow[d] & \arrow[l] \widetilde H_0 (F_{I_2}) \arrow[r] \arrow[d] & \dots \arrow[r]&  \widetilde H_0 (F_{I_{n-1}\vee I_n})  \arrow[d] &
\arrow[l]\widetilde H_0 ( F_{I_n} ) \arrow[d]\\
  H^1(B_{I_1}) \arrow[r] & H^1(B_{I_1\vee I_2}) & \arrow[l]H^1( B_{I_2}) \arrow[r] & \dots \arrow[r]& H^1( B_{I_{n-1}\vee I_n})  &\arrow[l] H^1(B_{I_n})
\end{tikzcd}
\end{center}
in which every vertical map is an Alexander duality isomorphism. Consequently,
\(
\widetilde H_0(\mathbb F_{\mathcal V}^\cup)
\) and \(
H^1(\mathbb B_{\mathcal V}^\cap)
\)
are isomorphic zigzag modules, hence 
$$\mathrm{Bar}(\widetilde H_0(\mathbb F_{\mathcal V}^\cup)) = \mathrm{Bar}(H^1(\mathbb B_{\mathcal V}^\cap)).$$
Since all (co-)homology groups are taken over the field $k$, the cohomological universal coefficient theorem yields natural isomorphisms
\[
H^1(B_I)
\cong
\operatorname{Hom}_k(H_1(B_I),k)
\]
for any binary image $I$. These assemble into a zigzag module isomorphism $$H^1(\mathbb B_{\mathcal V}^\cap) \cong \Hom_k(H_1(\mathbb B_{\mathcal V}^\cap), k).$$

Let $\tau$ denote the zigzag type of $H_1(\mathbb B_{\mathcal V}^\cap)$, and $\tau^*$ the zigzag type of $H^1(\mathbb B_{\mathcal V}^\cap)$. Note that $\tau^*$ is obtained from $\tau$ by reversing the edges. Moreover, let 
$$H_1(\mathbb B_{\mathcal V}^\cap)  \cong \bigoplus_{i} \langle b_i, d_i \rangle $$ be an interval decomposition of $H_1(\mathbb B_{\mathcal V}^\cap)$. Since $\Hom(-,k)$ commutes with finite direct sums in the category of zigzag modules, dualisation preserves the endpoints of the barcodes
\begin{align*}
\Hom_k(H_1(\mathbb B_{\mathcal V}^\cap), k ) &= \Hom\left(\bigoplus_{i} \mathbb I_\tau\langle b_i, d_i \rangle , k\right)\\
&\cong \bigoplus_i \Hom(\mathbb I_\tau\langle b_i, d_i \rangle, k )  \\
&\cong \bigoplus_i \mathbb I_{\tau^*}\langle b_i, d_i \rangle
\end{align*}
Hence, and interval decomposition of $H_1(\mathbb B_{\mathcal V}^\cap)$ induces an interval decomposition of $H^1(\mathbb B_{\mathcal V}^\cap)$. Thus, we have the following equality of undecorated barcodes $$\mathrm{Bar}(H_1(\mathbb B_{\mathcal V}^\cap)) = \mathrm{Bar}(H^1(\mathbb B_{\mathcal V}^\cap)).$$ 
Since $\tau$ and $\tau^*$ are obtained from each other via edge-reversal, the decorations can also be recovered by flipping them, i.e.\ open endpoints become closed, and vice versa.

Hence
\[
\mathrm{Bar}\!\left(\widetilde H_0(\mathbb F_{\mathcal V}^\cup)\right)
=
\mathrm{Bar}\!\left(H_1(\mathbb B_{\mathcal V}^\cap)\right).
\]
Finally, since the foreground padding makes the boundary pixel layer a connected component of $F_{I^\infty}$ that persists throughout the entire diagram, the barcodes of the reduced and unreduced $H_0$-zigzag agree up to a single full interval $\langle 1,n\rangle$, i.e.
\[
\mathrm{Bar}\!\left(\widetilde H_0(\mathbb F_{\mathcal V}^\cup)\right)
=
\mathrm{Bar}\!\left(H_0(\mathbb F_{\mathcal V}^\cup)\right)
\setminus \langle 1,n\rangle.
\]
Gathering the above results, we obtain
\begin{align*}
\mathrm{Bar}\!\left(H_0(\mathbb F_{\mathcal V}^\cup)\right)
\setminus \langle 1,n\rangle &= \mathrm{Bar}(\widetilde H_0(\mathbb F_{\mathcal V}^\cup))\\
&= \mathrm{Bar}(H_1(\mathbb B^\cap_{\mathcal{V}}))\\
&= \mathrm{Bar}(H^1(\mathbb B^\cap_{\mathcal{V}}))
\end{align*}

The equality $\mathrm{Bar}( H_0(\mathbb B_\mathcal V^ \cap)) = \mathrm{Bar}( H_1(\mathbb F_\mathcal V^ \cup))$ follows in the same way, and is even easier as we do not need to consider reduced homology as $H_1(F_{I^\infty}) \cong H^0(B_{I^\infty})$.

\end{proof}

\paragraph{Algorithmic Consequences.} 
Theorem~\ref{thm:dual-barcodes-alexander} shows that, for padded binary videos, the computation of $H_1$-zigzag persistence reduces to the computation of $H_0$-zigzag persistence on the complementary colour. Consequently, all algorithms and complexity bounds developed in the previous sections for connected component tracking immediately extend to the computation of cycle persistence. In particular, the decorated $H_1$-barcode can be recovered in near-linear time.

\section{The Pipeline}\label{sec:pipeline}

We conclude by summarising the complete computational pipeline. Let
\[
\mathcal V=(I_i)_{i=1}^n
\]
be a binary video of resolution $(w,h)$ equipped with a foreground connectivity
$\kappa\in\{4,8\}$.

\begin{enumerate}
\item Construct one of the formigrams
\(
\mathbb S \in
\left\{
\mathbb{F}^\cup_{\mathcal V},
\mathbb{F}^\cap_{\mathcal V},
\mathbb{B}^\cup_{\mathcal V},
\mathbb{B}^\cap_{\mathcal V}
\right\}
\)
with connectivity $\kappa$. This step requires
\(
\mathcal O(nwh)
\)
time.

\item Compute the interval decomposition of the associated
$H_0$-zigzag module
\(
H_0(\mathbb S)
\)
using the decomposition algorithm of Section~\ref{sec:int-decomp}. This step requires
\[
\mathcal O\!\left(E(\mathbb S)\log E(\mathbb S)\right)
\]
time.

\item To compute $H_1$-persistence, apply the same $H_0$-pipeline to the complementary colour with the opposite connectivity and the dual interpolation convention
\[
\cup \longleftrightarrow \cap,
\]
and recover the desired barcode via Theorem~\ref{thm:dual-barcodes-alexander}.
\end{enumerate}

\vspace{0.5cm}
\begin{center}
\begin{tikzpicture}
\node[
    draw,
    rounded corners,
    minimum width=3cm,
    minimum height=1cm,
    align=center
] (A) {Binary video\\ $\mathcal{V} = (I_i)_{i=1}^n$};
\node[
    draw,
    rounded corners,
    minimum width=3cm,
    minimum height=1cm,
    right=of A,
    align=center
] (B) {Formigram\\$\mathbb{F}^\cup_{\mathcal{V}},\
\mathbb{F}^\cap_{\mathcal{V}},\
\mathbb{B}^\cup_{\mathcal{V}},$ or $
\mathbb{B}^\cap_{\mathcal{V}} $};

\node[
    draw,
    rounded corners,
    minimum width=3cm,
    minimum height=1cm,
    right=of B
] (C) {$H_0$-Barcode};

\draw[->] (A) -- (B);
\draw[->] (B) -- (C);
\end{tikzpicture}
\end{center}

\paragraph{Complexity.} Given a formigram $\mathbb{S}$, the total complexity is thus given by 
$$\mathcal{O}(nwh + E(\mathbb S)\log E(\mathbb S) ). $$
In practice, the linear (and embarrassingly parallel) term on the left often dominates, as we show in the next section.

\section{Benchmarking}\label{sec:benchmarking}

We evaluate the computational performance and memory usage of the pipeline described in Section~\ref{sec:pipeline}. The benchmarks are designed to measure:
\begin{itemize}
    \item scaling with image resolution,
    \item the relative cost of connected-component labelling (CCL) and barcode decomposition,
    \item comparison with cubical-complex approaches to zigzag persistence.
\end{itemize}
\noindent All experiments were performed using the Rust implementation of \texttt{ZigVid}~\cite{ZigVidSoftwareRef} on a 2024 MacBook Pro equipped with an Apple M4 processor (10 cores) and 24 GB RAM. Unless otherwise stated, computations were executed on a single thread.

We compare against the implementation named \texttt{ImageZigzag} of~\cite{divasonZigzagPersistenceImage2024}, which constructs the \(V\)-construction for each frame, subdivides the resulting objects into simplicial complexes, and computes zigzag persistence using Dionysus~2~\cite{Dionysus2Ref}.

Moreover, the experiments demonstrate three main computational properties of \texttt{ZigVid}: linear scaling of formigram construction, with negligible cost of barcode decomposition, and near-ideal parallel scaling across frames.

\paragraph{Synthetic Benchmark Data.}
\begin{wrapfigure}{r}{0.4\textwidth}
\vspace{-1em}
\centering
\begin{tikzpicture}
  \node[anchor=south west, inner sep=0] (img) at (0,0)
    {\includegraphics[width=0.4\textwidth]{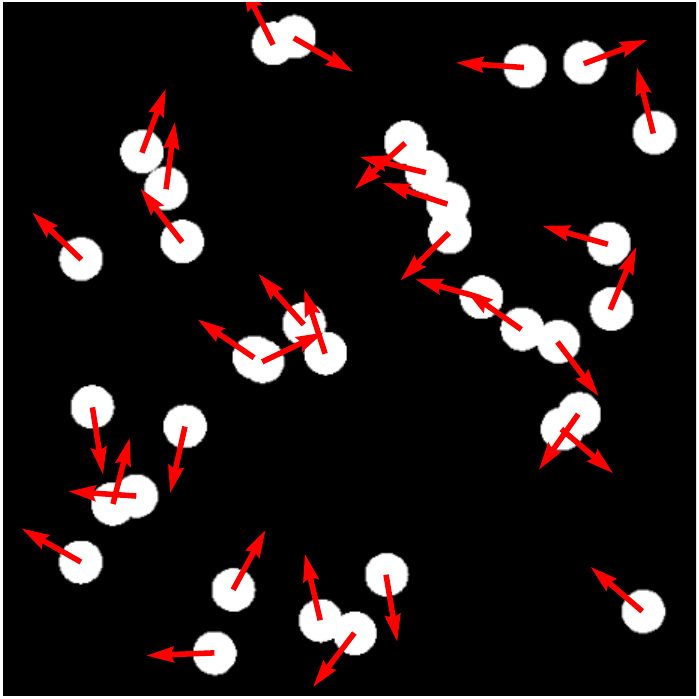}};






\end{tikzpicture}

\label{fig:synthetic-disc-benchmark}
\end{wrapfigure}

To generate videos with controllable topological complexity, we use synthetic binary sequences consisting of \(D\) filled discs of radius \(R\) evolving inside a rectangular domain of size \(w\times h\). A total of \(T\) frames are generated. Each disc moves with constant speed \(s\) and reflects elastically at the image boundary. Intersections between discs produce merging and splitting vertices in the associated formigrams. When \(s=0\), the connected components remain stationary, and no such events occur; increasing \(s\) increases their frequency. A boundary layer of white pixels is additionally included.

\paragraph{Comparison with Cubical Complex Methods.}
Each benchmark video consists of \(T=10\) frames containing \(D=100\) discs. Videos are generated at a base resolution of \(180\times180\) with disc radius \(R=8\) and speed \(s=6\), then downsampled to resolutions \(r\times r\) for
\[
r\in\{30,40,50,\dots,170,180\}.
\]

For each resolution, we perform five runs of both \texttt{ImageZigzag} and \texttt{ZigVid}, computing the \(H_0\)- and \(H_1\)-zigzag barcodes obtained from the union construction. The timing results are shown in Figure~\ref{fig:comparison-cubical}.

\begin{figure}[t]
\centering
\centering
{\large \textbf{Runtime Comparison with Cubical Complexes ($T=10$)}\par}
\vspace{1em}

\begin{minipage}{0.5\textwidth}
\centering
\includegraphics[width=\textwidth]{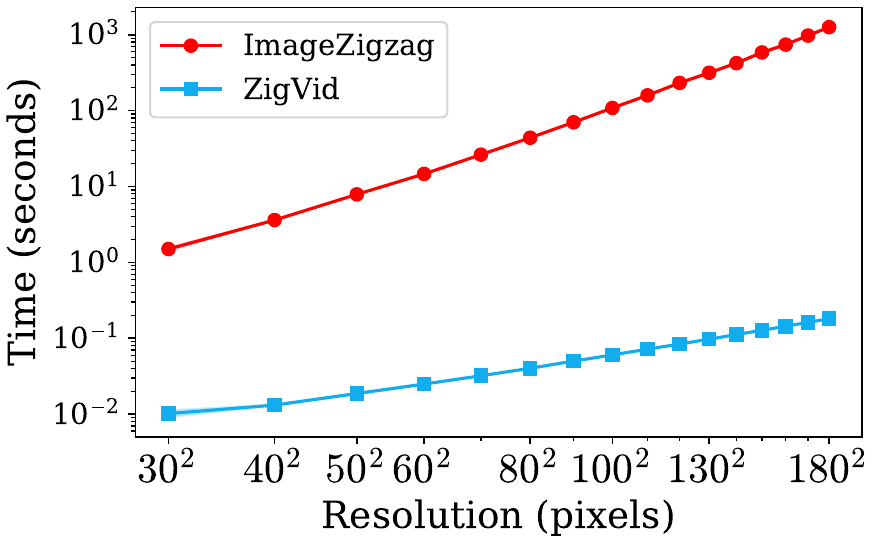}
\end{minipage}
\hfill
\begin{minipage}{0.45\textwidth}
\centering
\small

\vspace{-0.6cm}
\begin{tabular}{rrrr}
\toprule
$r\times r$ & ImageZigzag & ZigVid & Speedup \\
\midrule
$30^2$ & 1.50 s & 0.01 s & 147x \\
$40^2$ & 3.61 s & 0.01 s & 274x \\
$50^2$ & 7.87 s & 0.02 s & 421x \\
 & $\cdots$ & & \\
$150^2$ & 583.58 s & 0.13 s & 4578x \\
$160^2$ & 740.90 s & 0.14 s & 5140x \\
$170^2$ & 975.05 s & 0.16 s & 6015x \\
$180^2$ & 1258.84 s & 0.18 s & 6953x \\
\bottomrule
\end{tabular}

\end{minipage}

\caption{
\textbf{$H_0$- and $H_1$-barcode computation.} Runtime comparison with the cubical-complex pipeline on
synthetic videos consisting of \(T=10\) frames. 5 runs are performed for each method, and the mean is plotted. The shaded area corresponds to the standard error. 
}
\label{fig:comparison-cubical}
\end{figure}

The experiments show markedly different scaling behaviour between the two methods. In the present benchmark regime, cubical-complex methods already become prohibitively expensive at resolutions of \(180\times180\) pixels and \(10\) frames. This substantially limits the range of resolutions for which the cubical-complex pipeline is practically usable.

This is consistent with Section~\ref{sec:curse-of-res}: cubical-complex methods explicitly construct and update complexes whose size grows with image resolution, while \texttt{ZigVid} operates directly on connected-component dynamics encoded by the formigram. We now turn to evaluating the performance in large-scale scenarios enabled by \texttt{ZigVid}, which were not computable with previous methods. We test high-resolution and long-duration videos separately.

\paragraph{Resolution Scaling.}
To benchmark scaling with image resolution, we generate a synthetic video of resolution \(3000\times3000\) consisting of \(T=900\) frames (corresponding to a 30s video at 30 frames per second) and \(D=100\) discs moving with speed \(16\). To compare videos with comparable topological complexity, we downsample this source video to resolutions
\[
r\in\{250,500,750,\dots,2750,3000\}.
\]

For each resolution, we compute the \(H_0\)- and \(H_1\)-zigzag barcodes associated with the union construction. The results are shown in Figure~\ref{fig:res-scaling}. Additionally, the figure separates the runtime contributions of formigram construction and barcode decomposition. The decomposition stage contributes only a negligible fraction of the total runtime and remains essentially constant across resolutions. This is expected, since downsampling primarily affects the geometric realisation of the video, while leaving the underlying connected-component dynamics largely unchanged.

\begin{figure}[t]
\centering
{\large \textbf{Resolution Scaling of Total Barcode Computation ($T=900$)}\par}
\vspace{1em}
\begin{minipage}{0.5\textwidth}
\centering
\includegraphics[width=\textwidth]{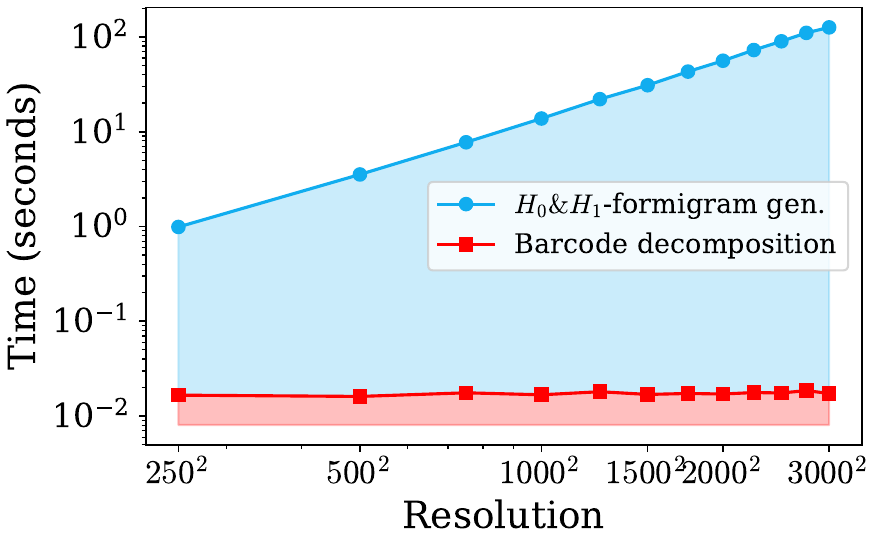}
\end{minipage}
\hfill
\begin{minipage}{0.45\textwidth}
\centering
\small

\vspace{-0.6cm}

\begin{tabular}{rrrr}
\toprule
$r\times r$ & Formigrams & Decomp. & Rel. Cost \\
\midrule
$250^2$ & 0.9717 s & 0.0167 s & 1.71\% \\
$500^2$ & 3.5207 s & 0.0161 s & 0.46\% \\
$750^2$ & 7.7105 s & 0.0176 s & 0.23\% \\
$1000^2$ & 13.7455 s & 0.0168 s & 0.12\% \\
$2000^2$ & 55.9528 s & 0.0171 s & 0.03\% \\
$2500^2$ & 89.7362 s & 0.0176 s & 0.02\% \\
$3000^2$ & 125.8147 s & 0.0173 s & 0.01\% \\

\bottomrule
\end{tabular}

\end{minipage}

\caption{
\textbf{Runtime scaling with image resolution.}
A synthetic video of resolution \(3000\times3000\) is downsampled to varying resolutions, and the \(H_0\)- and \(H_1\)-zigzag barcodes of the resulting 900-frame videos are computed. The plotted values show the mean over 5 runs. The barcode decomposition stage contributes negligibly to the total runtime and exhibits little dependence on image resolution. 
}
\label{fig:res-scaling}
\end{figure}

Consequently, the dominant cost lies in the formigram construction stage, which is \textit{linear} in the number of pixels and \textit{embarrassingly parallel} across frames. This is the key computational feature underlying the scalability of the method and provides the central computational advantage of the pipeline. We now investigate how this can be leveraged through parallelisation.

\paragraph{Real-Time Zigzag Persistence on 4K Video.}
To test the scalability of our method, we generate a \(3840\times2160\) (4K) video consisting of 900 frames, corresponding to 30 seconds at 30 fps, using the same disc parameters as in the resolution scaling benchmark. The results in Figure~\ref{fig:parallel-scaling} demonstrate that real-time barcode computation for high-resolution data is already achievable on modern consumer hardware using 5--6 cores. Moreover, the observed scaling closely follows ideal linear scaling, providing evidence that the pipeline is embarrassingly parallel across frames.

\begin{figure}[t]
\centering
{\large \textbf{Parallel Scaling on 4K Video ($T=900$)}\par}
\vspace{1em}
\begin{minipage}{0.5\textwidth}
\centering
\includegraphics[width=\textwidth]{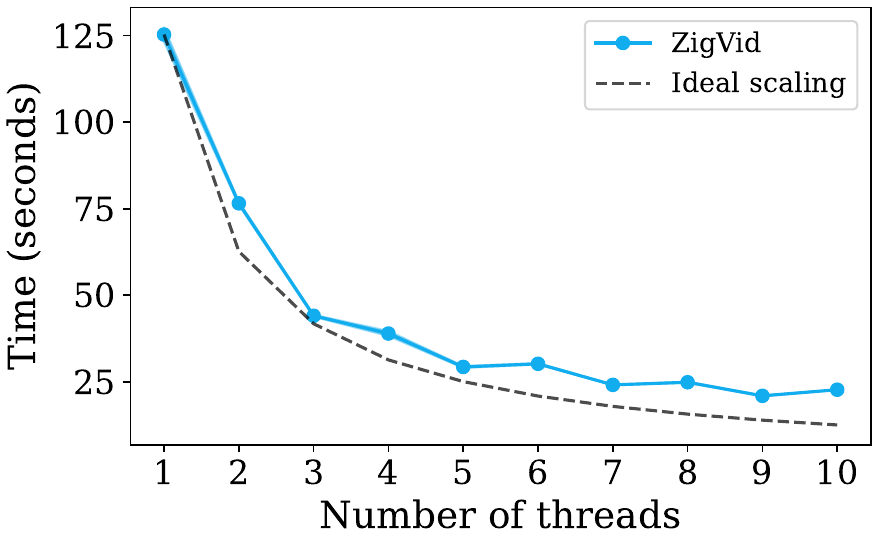}
\end{minipage}
\hfill
\begin{minipage}{0.45\textwidth}
\centering
\small

\vspace{-0.6cm}

\begin{tabular}{cc|cc}
\toprule
Threads & Runtime & Threads & Runtime \\
\midrule
1 & $125.30 s$ & 6 & $30.20 s$ \\
2 & $76.55 s$ & 7 & $24.11 s$ \\
3 & $44.07 s$ & 8 & $24.87 s$ \\
4 & $38.94 s$ & 9 & $20.94 s$ \\
5 & $29.29 s$ & 10 & $22.71 s$ \\

\bottomrule
\end{tabular}

\end{minipage}

\caption{
\textbf{Parallel scaling on 4K video.}
Runtime for computing the \(H_0\)- and \(H_1\)-zigzag barcodes of a \(3840\times2160\), 900-frame video (30 seconds at 30 fps) as a function of thread count. Using 6 threads, the full barcode computation completes in approximately 30 seconds, demonstrating real-time zigzag persistence on 4K video. The dashed curve indicates ideal linear scaling relative to the single-thread runtime.
}
\label{fig:parallel-scaling}
\end{figure}

\section{Conclusion}

We have presented a pipeline for computing the $H_0$- and $H_1$-zigzag persistence of binary videos in near-linear time. The key observation is that the relevant zigzag modules can be encoded by graph structures whose interval decompositions can be computed efficiently. Furthermore, Alexander duality allows the computation of $H_1$-persistence to be reduced entirely to an additional $H_0$ computation on the complementary colour.

In practice, the construction of these graph structures is linear in the number of pixels and embarrassingly parallel, and often constitutes the dominant computational cost. Consequently, the proposed pipeline scales exceptionally well. Our experiments demonstrate that barcode decompositions of high-resolution, high-frame-rate videos---including $4$K data---are feasible even on commodity hardware.

\section*{Acknowledgements}

The author would like to thank Jeffrey Giansiracusa for numerous insightful discussions and for his careful reading of the paper, which greatly improved its exposition. The author also thanks Iolo Jones for many helpful discussions and encouragement.

\newpage

\appendix

\section{Proof of Alexander Duality for Binary Images} \label{app:alex-dual}
\begin{proof}[{Proof of Theorem~\ref{thm:alexander-duality}}]
    We treat the case \(\kappa=8\); the case \(\kappa=4\) is symmetric. Hence, the foreground is modelled by the \(T\)-construction and the background by the \(V\)-construction:
\[
F_{I^\infty}=|T_{I^\infty}(1)|,
\qquad
B_{I^\infty}=|V_{I^\infty}(0)|.
\]

Recall that for the padded image $I^\infty$, the cubical complex $K$ was generated by the 2-cells
\begin{equation*} 
Q_{i,j}\coloneqq\left[i-\frac{1}{2}, i+\frac{1}{2}\right]\times\left[j-\frac{1}{2},j+\frac{1}{2}\right], \quad \text{with $(i,j)\in \Omega_{w+2,h+2}$,}
\end{equation*}
and that the subcomplex $T_{I^\infty}(1)$ of $K$ was generated by cells $Q_{i,j}$ with $I^\infty(i,j)=1$.

Notice that $\partial|K|$ is connected, and that since  $I^\infty:\Omega_{w+2, h+2} \to \{0, 1\}$ contains a boundary layer of foreground pixels, we have that $\partial|K|\subseteq \partial|T_{I^\infty}(1)|$. Moreover, $|B_{I^\infty}|$ is contained in the convex hull of $|F_{I^\infty}|\subset \mathbb R^2$.

One of the key insights of~\cite{MR4454782} is that one can adjoin a 2-cell $\sigma^\infty$ to $K$, by homeomorphically attaching the boundary of $\sigma^\infty$ along $\partial |K|$. The resulting complex $K^\infty\coloneq K \sqcup_{\partial |K|} \sigma^\infty$ then satisfies $|K^\infty|\cong S^2$ .

Since \(\partial |K|\subseteq \partial|T_{I^\infty}(1)|\), the same exterior cell $\sigma^\infty$ may be attached to the foreground complex to form the complex:
\[
T_{I^\infty}(1)\cup_{\partial |K|}\sigma^\infty
\subseteq K^\infty .
\]
Thus \(T_{I^\infty}(1)\cup_{\partial |K|}\sigma^\infty\) is the foreground complex viewed as a subcomplex
of the compactified sphere. As no pixel of $(I^\infty)^{-1}(0)$ lies in the boundary pixel layer, $V_{I^\infty}(0)$ can also be seen as a subcomplex of $K^\infty$ without changes.

In \cite{MR4454782}, the authors then show that \(T_{I^\infty}(1)\cup_{\partial |K|}\sigma^\infty\) and $V_{I^\infty}(0)$ form a dual cubical cell decomposition of $K^\infty$. It follows from the standard complement-dual-cell retraction
(equivalently, Lemma~70.1 of~\cite{Munkres} after barycentric
subdivision) that there exists a deformation retract
\begin{equation}\label{eq:before}
|K^\infty|\setminus |V_{I^\infty}(0)|  
\simeq
 |T_{I^\infty}(1)\cup_{\partial |K|}\sigma^\infty|. 
\end{equation}
For $i\in\{0,1\}$, we have 
\begin{align*}
\widetilde H_i(|T_{I^\infty}(1)\cup_{\partial |K|}\sigma^\infty| ) &\cong \widetilde H_i(|K^\infty|\setminus |V_{I^\infty}(0)| ) \\
&\overset{}{\cong} \widetilde H_i(S^2\setminus |V_{I^\infty}(0)|)  &&\tag*{By Equation (\ref{eq:before}),}\\
&\overset{}{\cong} \widetilde H^{1-i}(|V_{I^\infty}(0)|)&&\tag*{By Alexander duality,~\cite[Thm 71.1]{Munkres}.}
\end{align*}
Hence
$$
\widetilde H_0(|T_{I^\infty}(1)\cup_{\partial |K|}\sigma^\infty|) \cong  H^{1}(|V_{I^\infty}(0)|), \quad \text{and}\quad  H_1(|T_{I^\infty}(1)\cup_{\partial |K|}\sigma^\infty|) \cong  \widetilde H^{0}(V_{I^\infty}(0)).
$$
Undoing the compactification then leads to 
$$\widetilde H_0(F_{I^\infty}) \cong H^1(B_{I^\infty})\ \quad \text{and} \quad H_1(F_{I^\infty}) \cong H^0(B_{I^\infty}),$$
where we pass from reduced to unreduced homology on the right, as undoing the identification creates a non-contractible cycle in the presence of a connected component of the background. The naturality statement is Corollary 72.4 of~\cite{Munkres}.
\end{proof}

\section{Some Category Theory} \label{appendix:some-cat}

In this section, we recall some notions of category theory. 

\begin{definition}
Let $\mathcal C$ be a category, and let $\mathbb D : \mathcal P_\tau \to \mathcal C$ be a $\mathcal C$-valued diagram over a zigzag type $\tau$. A \emph{cone} over $\mathbb D$ consists of:
\begin{itemize}
    \item an object $N\in\mathcal C$, and
    \item a family of morphisms $$\phi_i: N \to \mathbb D(i),\quad \text{for each } i\in \mathcal P_\tau,$$
    such that for every morphism $e:i\to j$ in $\mathcal P_\tau$,
    $$\mathbb D[e] \circ \phi_i = \phi_j. $$
\end{itemize}
\end{definition}

The dual notion is given by the following definition.
\begin{definition}
Let $\mathcal C$ be a category, and let $\mathbb D : \mathcal P_\tau \to \mathcal C$ be a $\mathcal C$-valued diagram over a zigzag type $\tau$. A \emph{cocone} over $\mathbb D$ consists of:
\begin{itemize}
    \item an object $M\in\mathcal C$, and
    \item a family of morphisms $$\psi_i: \mathbb D(i)  \to M,\quad \text{for each } i\in \mathcal P_\tau,$$
    such that for every morphism $e:i\to j$ in $\mathcal P_\tau$,
    $$\mathbb  \psi_i = \psi_j\circ \mathbb D[e]. $$
\end{itemize}
\end{definition}

In some categories, there exist distinguished cones and cocones.

\begin{definition}
Let \( \mathbb D : \mathcal{P}_\tau \to \mathcal{C} \) be a diagram. A \emph{limit} of \( \mathbb D \) is a cone $(L, (\pi_i)_{i\in\mathcal P_\tau})$
for which the following universal property holds: for any cone \( (N, (\phi_i)_{i\in\mathcal P_\tau}) \) there exists a unique morphism \( \eta : N \to L \) such that
\[
\pi_i \circ \eta = \phi_i \quad \text{for all } i \in \mathcal{P}_\tau.
\]
\end{definition}

\begin{definition}
Let \( \mathbb D : \mathcal{P}_\tau \to \mathcal{C} \) be a diagram. A \emph{colimit} of \( \mathbb D \) is a cocone \( (C, (\iota_i)_{i \in \mathcal{P}_\tau}) \)
for which the following universal property holds: for any cocone \( (M, (\psi_i)_{i \in \mathcal{P}_\tau}) \) there exists a unique morphism \( \nu_M : C \to M \) such that
\[
\nu_M \circ \iota_i = \psi_i \quad \text{for all } i \in \mathcal{P}_\tau.
\]
\end{definition}

\begin{remark}\label{rmk:lim-colim}
Limits and colimits do not necessarily exist for a given diagram. However, whenever they do exist, they are unique up to unique isomorphism by the universal property. For a diagram \( \mathbb D : \mathcal{P}_\tau \to \mathcal{C} \), we denote the limit by \( \varprojlim \mathbb D \) and the colimit by \( \varinjlim \mathbb D \).
We will say that \( \mathcal{C} \) admits limits (resp.\ colimits) of shape \( \mathcal{P}_\tau \) if every diagram \( \mathbb D : \mathcal{P}_\tau \to \mathcal{C} \) has a limit (resp.\ colimit).
\end{remark}

\begin{proposition}
Let $\tau$ be a zigzag type and $k$ be a field. Then both $\cat{set}$ and $\cat{vect_k}$ admit limits and colimits of shape $\mathcal P_\tau$:

\begin{enumerate}
\item Let \( \mathbb S : \mathcal{P}_\tau \to \cat{set} \) be a diagram. The limit and colimit exist and are given by
\[
\varprojlim \mathbb S
=
\left\{ (s_i)_{i \in \mathcal{P}_\tau} \in \prod_{i \in \mathcal{P}_\tau} \mathbb S(i)
\;\middle|\;
\mathbb S[e](s_i) = s_j \text{ for all } e:i\to j \right\},
\]
and
\[
\varinjlim \mathbb S
=
\left( \bigsqcup_{i \in \mathcal{P}_\tau} \mathbb S(i) \right)\Big/ \sim,
\]
where \( \sim \) is the equivalence relation generated by
\[
s_i \sim \mathbb S[e](s_i), \quad \text{for all } e:i\to j.
\]

\item Let \( \mathbb V : \mathcal{P}_\tau \to \cat{vect_k} \) be a diagram. The limit and colimit exist and are given by
\[
\varprojlim \mathbb V
=
\left\{ (v_i)_{i \in \mathcal{P}_\tau} \in \prod_{i \in \mathcal{P}_\tau} \mathbb V(i)
\;\middle|\;
\mathbb V[e](v_i) = v_j \text{ for all } e:i\to j \right\},
\]
and
\[
\varinjlim \mathbb V
=
\left( \bigoplus_{i \in \mathcal{P}_\tau} \mathbb V(i) \right)\Big/
\left\langle v_i - \mathbb V[e](v_i) \;\middle|\; e:i\to j \right\rangle.
\]
\end{enumerate}
\end{proposition}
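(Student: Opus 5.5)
The plan is to verify directly, in each of the two categories, that the proposed objects carry the structure of a cone (resp.\ cocone) and satisfy the universal property. Since $\mathcal P_\tau$ has finitely many objects, all products, disjoint unions and direct sums involved are finite. Hence the constructed objects are again finite sets (resp.\ finite-dimensional vector spaces) and lie in $\cat{set}$ (resp.\ $\cat{vect_k}$). The two cases run in parallel, so I treat the set case in detail and then indicate the linear modifications.

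\textbf{Limits.} For $\mathbb S:\mathcal P_\tau\to\cat{set}$, let $L$ be the displayed subset of $\prod_i \mathbb S(i)$ and let $\pi_i$ be the restriction of the $i$-th coordinate projection. The defining condition $\mathbb S[e](s_i)=s_j$ is exactly the cone condition $\mathbb S[e]\circ\pi_i=\pi_j$. Given any cone $(N,(\phi_i))$, I define $\eta(x)=(\phi_i(x))_i$. The cone condition on $\phi$ shows that $\eta(x)\in L$. Uniqueness holds because any $\eta'$ with $\pi_i\circ\eta'=\phi_i$ has prescribed coordinates. In $\cat{vect_k}$ the same argument applies verbatim. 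The only additional observations are that the conditions $\mathbb V[e](v_i)=v_j$ are linear, so $L$ is a subspace, and that $\eta$ is linear because each $\phi_i$ is.

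\textbf{Colimits.} Let $C=\bigl(\bigsqcup_i \mathbb S(i)\bigr)/\!\sim$ with $\iota_i(s)=[s]$. This is a cocone, since $s_i\sim\mathbb S[e](s_i)$ gives $\iota_j\circ\mathbb S[e]=\iota_i$. Given a cocone $(M,(\psi_i))$, I want to set $\nu([s_i])=\psi_i(s_i)$.

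The main (mild) obstacle is well-definedness, because $\sim$ is only the equivalence relation \emph{generated} by the relations $s_i\sim\mathbb S[e](s_i)$. To handle this, I will consider the relation $R$ on $\bigsqcup_i\mathbb S(i)$ defined by $x\,R\,y$ if and only if $\Psi(x)=\Psi(y)$, where $\Psi=\bigsqcup_i\psi_i$. This $R$ is an equivalence relation, and it contains every generator because $\psi_i=\psi_j\circ\mathbb S[e]$. By minimality of $\sim$ we get $\sim\,\subseteq R$, so $\Psi$ factors through $C$, and this factorisation is $\nu$. Uniqueness follows since $\bigsqcup_i\iota_i$ is surjective onto $C$, so $\nu$ is forced on every class.

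In $\cat{vect_k}$ the argument is the same with one substitution. The map $\Psi=\bigoplus_i\psi_i:\bigoplus_i\mathbb V(i)\to M$ is linear and annihilates each generator $v_i-\mathbb V[e](v_i)$, hence annihilates the subspace they span. Therefore $\Psi$ descends to a linear map $\nu$ on the quotient, which plays the role of the equivalence-relation argument above. Uniqueness again follows from the fact that the images $\iota_i(\mathbb V(i))$ span the quotient.
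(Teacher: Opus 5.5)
Your proposal is correct: checking the cone and cocone conditions and the universal properties directly is exactly the standard verification, with the kernel-of-$\Psi$ argument handling the generated equivalence relation and the spanning argument handling the linear quotient. The paper states this proposition in the appendix as a recollection without proof, and your finiteness remark supplies the one extra point needed there, since the paper's $\cat{set}$ and $\cat{vect_k}$ are the categories of finite sets and finite-dimensional spaces.
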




\begin{remark} \label{rmk:cc-as-colim}
Given a formigram $\mathbb{S}:\mathcal P_\tau \to\cat{set}$ over a zigzag type $\tau$ of length $n$, these constructions possess a particularly nice interpretation at the level of their associated merge graph $\mathcal G(\mathbb S)$. Elements of the limit correspond to chains of vertices $(s_1,\dots,s_n)$ such that consecutive vertices are connected by an edge. Note that if a fibre $S_i=\emptyset$, the direct product $\prod_i S_i$ is also empty. The colimit, on the other hand, identifies vertices that are connected through the graph, and can thus be identified with the set of connected components of $\mathcal M$.
\end{remark}

\section{Proof of Proposition~\ref{prop:fields-and-setmaps}} \label{app:proof-of-prop}
Before we start the proof of Proposition~\ref{prop:fields-and-setmaps}, we need some auxiliary results. For a given formigram $\mathbb S:\mathcal P_\tau \to \cat{set}$, and field $k$, let $$(\varprojlim \mathbb S, (\pi_i)_{i\in\tau}), \quad \text{and} \quad (\varinjlim \mathbb S, (\iota_i)_{i\in\tau})$$ denote the limit and colimit of the diagram $\mathbb S$ in $\cat{set}$. Given the linearisation functor $k[-]:\cat{set}\to\cat{vect_k}$, we denote the $\cat{vect_k}$-valued limit and colimit of the diagram $k[\mathbb S]$ by $$(\varprojlim k[\mathbb S], (\Pi_i)_{i\in\tau}), \quad \text{and} \quad (\varinjlim k[\mathbb S], (I_i)_{i\in\tau}).$$
These fit into the following commutative diagrams
\begin{center}
\begin{tikzcd}[column sep=small,]
&& \arrow[lld, "\pi_1",swap]  \varprojlim \mathbb S \arrow[d, "\pi_i",swap]   \arrow[rrd, "\pi_n"]   \\ 
S_1\arrow[drr, "\iota_1", swap] \arrow[r, <->] & \dots \arrow[r, <->] & \arrow[d,"\iota_i",swap]  S_i \arrow[r, <->] & \dots \arrow[r, <->] & S_n \arrow[lld,"\iota_n"]\\
&&      \varinjlim \mathbb S
\end{tikzcd}
\hspace{0.5cm}
\begin{tikzcd}[column sep=small,]
&& \arrow[lld, "\Pi_1",swap]  \varprojlim k[\mathbb S] \arrow[d, "\Pi_i",swap]   \arrow[rrd, "\Pi_n"]   \\ 
k[S_1]\arrow[drr, "I_1", swap] \arrow[r, <->] & \dots \arrow[r, <->] & \arrow[d,"I_i",swap]  k[S_i] \arrow[r, <->] & \dots \arrow[r, <->] & k[S_n] \arrow[lld,"I_n"]\\
&&      \varinjlim k[ \mathbb S]
\end{tikzcd}
\end{center}

\noindent Recall that for every $i\in\tau$, the morphisms $\psi^{(i)}_{k[\mathbb S]}=I_i\circ\Pi_i: \varprojlim k[\mathbb S] \to \mycolim k[\Sb]$ were shown to coincide. Denote this morphism by $\psi_{k[\mathbb S]}$.

\begin{lemma}\label{lem:sections}
Let $\mathbb S :\mathcal P_\tau \to \cat{set}$ be a formigram over a zigzag type $\tau$ of length $n$ and let $k$ be a field. Suppose that the colimit consists of a single element, i.e.\ $\mycolim \Sb = \{ C\}$. Then
$$\varprojlim \mathbb S \neq \emptyset \iff \varprojlim k[\mathbb S] / \ker(\psi_{k[\mathbb S]}) \neq \{ 0 \}.$$ 
\end{lemma}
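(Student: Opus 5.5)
Note first that $\varprojlim k[\Sb]/\ker(\psi_{k[\Sb]})\cong \im(\psi_{k[\Sb]})$, so the statement says: $\varprojlim\Sb\neq\emptyset$ if and only if $\psi_{k[\Sb]}\neq 0$. Both directions will be made explicit using the concrete descriptions of limits and colimits from the appendix proposition. I will also identify the colimit $\varinjlim k[\Sb]$ when $\varinjlim\Sb=\{C\}$.

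\emph{Identifying the colimit.} The colimit $\varinjlim k[\Sb]$ is $\bigoplus_i k[S_i]$ modulo the relations $e_s - e_{\Sb[e](s)}$. Since the maps are total functions, these relations identify basis vectors lying in the same connected component of $\mathcal G(\Sb)$. When the graph is connected, I claim $\varinjlim k[\Sb]\cong k$ via the augmentation $\epsilon:\bigoplus_i k[S_i]\to k$, $e_s\mapsto 1$. The map $\epsilon$ kills all relations. Conversely, every $e_s$ is congruent to a fixed $e_{s_0}$ modulo relations, by following an undirected path in the graph. So the quotient is spanned by one class, which $\epsilon$ shows is nonzero. Thus $I_i(e_s)=1$ for all $s\in S_i$, and $I_i(\sum_s a_s e_s)=\sum_s a_s$.

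\emph{($\Rightarrow$).} Given a compatible family $(s_1,\dots,s_n)\in\varprojlim\Sb$, the tuple $(e_{s_1},\dots,e_{s_n})$ is compatible under the linearised maps, so it lies in $\varprojlim k[\Sb]$. Its image under $\psi=I_1\circ\Pi_1$ is $I_1(e_{s_1})=1\neq 0$. Hence the quotient is nonzero.

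\emph{($\Leftarrow$).} Suppose $v=(v_1,\dots,v_n)\in\varprojlim k[\Sb]$ with $\psi(v)=\epsilon(v_i)\neq0$, where the value is independent of $i$. I need to produce a compatible tuple of elements. Write $v_i=\sum_{s\in S_i}a_s e_s$ and let $\mathrm{supp}_i=\{s\in S_i: a_s\neq 0\}$, which is nonempty since $\epsilon(v_i)\neq 0$.

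The key step, and the main obstacle, is extracting a set-level section from a linear one; signs could cancel. The plan is to propagate levelwise, keeping the invariant that the chosen $s_i$ has a suitable nonzero coefficient. The compatibility conditions are:
\begin{itemize}
\item for a forward edge $i\to i+1$: $a_t=\sum_{s\in f^{-1}(t)} a_s$ for every $t\in S_{i+1}$;
\item for a backward edge $i\leftarrow i+1$: $a_s=\sum_{t\in g^{-1}(s)} a_t$ for every $s\in S_i$.
\end{itemize}
These say that coefficient mass is conserved along fibres. I will build the tuple greedily left to right, choosing $s_1\in\mathrm{supp}_1$ and maintaining $s_i\in\mathrm{supp}_i$.
\begin{itemize}
\item At a backward edge, $a_{s_i}\neq 0$ is a sum over $g^{-1}(s_i)$, so some $t$ in that fibre has $a_t\neq0$. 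Take $s_{i+1}=t$, which satisfies $g(s_{i+1})=s_i$.
\item At a forward edge, set $s_{i+1}=f(s_i)$. However, $a_{f(s_i)}$ may vanish through cancellation, so the greedy invariant can fail here.
\end{itemize}

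To handle forward edges I would strengthen the invariant to a whole subset. Let $A_i\subseteq S_i$ be the set of $s$ that extend to a compatible partial section on $\llbracket 1,i\rrbracket$, and show by induction that $\sum_{s\in A_i}a_s=\epsilon(v_i)\neq 0$.
\begin{itemize}
\item Forward step: $A_{i+1}=f(A_i)$. I would show the mass on $f^{-1}(A_{i+1})\setminus A_i$ is zero, using the fact that elements not in $A_i$ cannot reach level $1$ compatibly.
\item Backward step: $A_{i+1}=g^{-1}(A_i)$, whose mass equals that of $A_i$ by conservation.
\end{itemize}
The forward claim is the delicate point. If the direct induction resists, I would fall back to an induction on $n$ by contracting the last edge. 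Alternatively, I would invoke the cited \cite[Prop.~4.9]{kim_generalized_2021}, whose content is exactly this combinatorial conservation statement. At the end, $A_n\neq\emptyset$, and unwinding the partial sections gives an element of $\varprojlim\Sb$.
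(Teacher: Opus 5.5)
Your ($\Rightarrow$) direction and your identification of $\varinjlim k[\Sb]\cong k$ via the augmentation are correct and agree with the paper. The gap is in ($\Leftarrow$), at exactly the step you flag. You claim that the coefficients on $f^{-1}(A_{i+1})\setminus A_i$ vanish because those elements cannot reach level $1$. That claim is false, and no argument can close the gap, because the implication itself fails.

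Here is a counterexample. Take $\tau: 1\to 2\leftarrow 3\to 4\leftarrow 5$ with $S_1=\{x\}$, $S_2=\{y,t\}$, $S_3=\{r,p,q\}$, $S_4=\{z,w\}$, $S_5=\{u\}$. The maps are $x\mapsto y$; $r\mapsto y$, $p,q\mapsto t$; $r,p\mapsto z$, $q\mapsto w$; and $u\mapsto w$.
\begin{itemize}
\item The merge graph is a path through $x,y,r,z,p,t,q,w,u$ in this order, so $\mycolim\Sb$ is a singleton.
\item Any section is forced to begin $(x,y,r,z,\dots)$, and $z$ has no preimage in $S_5$, so $\mylim\Sb=\emptyset$.
\item Nevertheless
\[
v=(e_x,\ e_y,\ e_r-e_p+e_q,\ e_w,\ e_u)
\]
is compatible over every field: $e_r-e_p+e_q\mapsto e_y-e_t+e_t$ backwards and $\mapsto e_z-e_z+e_w$ forwards. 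It has augmentation $1$, so $\psi_{k[\Sb]}\neq 0$.
\item Indeed $k[\Sb]\cong\mathbb I_\tau\langle 1,5\rangle\oplus\mathbb I_\tau\langle 2,3\rangle\oplus\mathbb I_\tau\langle 3,4\rangle$ has a full bar with no set-level section behind it.
\end{itemize}
In your notation, $A_3=\{r\}$ and $A_4=\{z\}$. The element $p\in f^{-1}(A_4)\setminus A_3$ cannot reach level $1$, yet it carries coefficient $-1$. Only the fibre sum $a_p+a_q=a_t=0$ is forced to vanish, and the forward map then separates $p$ from $q$. So the mass of $A_4$ drops from $1$ to $0$, and $A_5=\emptyset$.

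The paper's own proof has the same hole. In Case~1 of Claim~\ref{clm:U}, the equality $\omega(U_{i+1})=\omega(U_i)$ silently drops the coefficients on $f_i^{-1}(U_{i+1})\setminus U_i$. On the example above, with $U_1=\{x\}$, one gets $\omega(U_4)=0$ and $U_5=\emptyset$. So you located the weak point correctly, but neither of your fallbacks rescues it. Induction on $n$ cannot prove a false statement. Citing Proposition~\ref{prop:fields-and-setmaps} is circular, since Appendix~\ref{app:proof-of-prop} derives it from this lemma. That proposition, as formulated here, is also contradicted by the same example: the rank is $1$ while the image has cardinality $0$. What survives in general is only your ($\Rightarrow$) direction, i.e.\ $|\im\psi_{\Sb}|\le\mathrm{rank}\,\psi_{k[\Sb]}$. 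The converse needs additional hypotheses on $\Sb$.
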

\begin{proof}
If one of the $S_i=\emptyset$, then $\mylim \Sb = \emptyset$, and $\ker(\psi_{k[\Sb]}) = \mylim k[\Sb]$, thus the statement holds. We can thus assume that all $S_i$ are non-empty from now on. Given $s\in S_i$, we denote the induced basis element of $k[S_i]$ by $e^{(i)}_s$. One direction is easy to prove. If $(s_1,\dots, s_n)\in \mylim \Sb$, then $(e^{(1)}_{s_1},\dots,e^{(n)}_{s_n})$ is a non-trivial element of $\varprojlim k[\mathbb S] / \ker(\psi_{k[\mathbb S]})$.

Conversely, we start with the following observation. Since $k$ is left-adjoint, it commutes with colimits, and hence $k[\mycolim \Sb] \cong \mycolim k[\Sb]$. Therefore, $\mycolim k[\Sb]$ is one-dimensional, and we denote its basis vector by $E_C$.

Furthermore, let $v = (v^{(1)},\dots, v^{(n)} )\in \mylim k[\mathbb S]$, such that $\psi_{k[\Sb]}(v)\neq 0$. Pointwise, we decompose the vectors as $$v^{(i)} = \sum_{s \in S_i} \lambda^{(i)}_{s} e^{(i)}_s.$$
Acting on it with $\psi^{(i)}_{k[\mathbb S]}$, we obtain 
$$\psi^{(i)}_{k[\mathbb S]}(v) = \Big(\sum_{s\in S_i} \lambda^{(i)}_{s}\Big)\cdot E_c.$$
As $\psi^{(i)}_{k[\mathbb S]}$ was independent of $i$, we have 
$$\Big(\sum_{s\in S_i} \lambda^{(i)}_{s}\Big) = \Big(\sum_{s\in S_j} \lambda^{(j)}_{s}\Big), \quad \text{for all $i,j\in \tau$. } $$
We now define $U_1 = \{s\in S_1 \mid \lambda_s^{(i)} \neq 0 \} \subseteq S_1$ and for any subset $U\subseteq S_i$, we define the function $\omega(U)\coloneq \sum_{s\in U} \lambda^{(i)}_{s} \in k$ that projects onto the only coordinate of the colimit. Since $v$ is not in the kernel of $\psi^{(1)}_{k[\Sb]}$, $U_1$ is non empty and $\omega(U_1)\neq 0$. Moreover, for each $i\in\tau$, we define 
$$U_i\coloneq \{ u\in S_i \mid \exists (s_1,\dots,s_{i-1},u) \in \mylim \Sb[1,i], \text{ with $s_1\in U_1$}  \}, $$
i.e.\ the set of elements in $S_i$ directly connected to $U_1$ via a chain of compatible elements. We then have the following claim

\begin{claim}\label{clm:U}
In the above setting, we have
\begin{itemize}
    \item $\omega(U_i) = \omega(U_1),$
    \item $U_i\neq \emptyset$.
\end{itemize}
\end{claim}

\begin{proof}[Proof of Claim \ref{clm:U}]
We prove the statement by induction. If $i=1$, it is true by the definition of $U_1$. For the induction step, assume that the claim holds for $i<n$, and we want to show it for $i+1$. We distinguish between the directions of the edge in the zigzag type $\tau$ between the nodes $i$ and $i+1$:
\begin{itemize}[leftmargin=*]
    \item Case 1 $(i\to i+1)$: set $f_i = \Sb[i\to i+1] : S_i \to S_{i+1}$. Then 
    $$U_{i+1} = f_i(U_i) = \{ u' \in S_{i+1} \mid f_i^{-1}(u')\in U_i\} \neq \emptyset $$ 
    as $f_i$ is a set function, and $S_{i+1}$ is not empty. Moreover,
    $$k[f_i]\left( \sum_{u\in U_i} \lambda^{(i)}_{u} e^{(i)}_u   \right) = \sum_{u\in U_i}\lambda^{(i)}_{u} e^{(i)}_{f_i(u)}  $$
    and hence $\omega(U_{i+1}) = \omega(U_i) = \omega(U_1),$ where the last equality follows from the induction hypothesis.
    \item Case 2 $(i+1\to i)$: set $g_i = \Sb[i+1\to i] : S_{i+1} \to S_{i}$. Then 
    $$U_{i+1} = g_i^{-1}(U_i) = \{ u' \in S_{i+1} \mid g_i(u')\in U_i\}.$$ 
    Since $\omega(U_i)\neq 0$, there must exist a $u^{(i)} \in U_i$ with $\lambda^{(i)}_{u^{(i)}} \neq 0$. If $U_{i+1}$ is empty, there cannot exist a $u'\in S_{i+1}$ such that $g_{i}(u') = u^{(i)}$. This contradicts the fact that  $k[g_i](v^{(i+1)}) = v^{(i)}\neq 0$. Hence $U_{i+1}$ must be non-empty. By the same consistency relation, we also have that $\omega(U_{i+1}) = \omega(U_i)$.
\end{itemize}
\renewcommand\qedsymbol{$\square_\text{ Claim}$}
\qedhere
\end{proof}

\noindent Consequently, $U_n$ is non-empty, showing that $\mylim\Sb\neq \emptyset$.

\end{proof}

It is easy to check that $(k[\varprojlim \mathbb S] , (k[\pi_i])_{i\in\tau})$ forms a cone of the diagram $k[\mathbb S]$, and dually that $(k[\varinjlim \mathbb S] , (k[\iota_i])_{i\in\tau})$ forms a cocone. By the universal properties of limits and colimits, there exist morphisms
$$\eta : k[\varprojlim \mathbb S] \to \varprojlim k[\mathbb S],\quad \text{and} \quad \nu: \varinjlim k[\mathbb S] \to k[\varinjlim \mathbb S].$$
Moreover, since $k[-]$ is left-adjoint to the forgetful functor, it commutes with colimits, making \ $\nu$ is an isomorphism. For a fixed $i\in\tau$, we thus have the following commutative diagram

\begin{center}
\begin{tikzcd}
k[\varprojlim \mathbb S] \arrow[rr,"\eta"] \arrow[dr,"{k[\pi_i]}",swap]  && \varprojlim k [ \mathbb S] \arrow[dl,"\Pi_i" ]\\
& \arrow[dl, "{k[\iota_i]}",swap]k[S_i]\arrow[dr, "I_i"] & \\
k[\varinjlim \mathbb S] && \arrow[ll,"\nu" ,"\sim"'] \varinjlim k[\mathbb S]
\end{tikzcd}
\end{center}

We now have the following claim.
\begin{lemma}\label{clm:ranks}
In the above setting, we have
$$\mathrm{rank}\big( k[\iota_i]\circ k[\pi_i]: k[\varprojlim \mathbb S] \to k[\varinjlim \mathbb S]    \big) 
= \mathrm{rank} \big( I_i\circ\Pi_i: \varprojlim k[\mathbb S] \to \varinjlim k[\mathbb S]    \big) 
$$
\end{lemma}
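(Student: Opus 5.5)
The plan is to reduce to the case of a single connected component of the merge graph $\mathcal G(\Sb)$, where Lemma~\ref{lem:sections} applies directly. I then count ranks component by component on both sides.

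First, the easy inequality. The commutative diagram gives $k[\iota_i]\circ k[\pi_i] = \nu\circ I_i\circ \Pi_i\circ\eta$. Since $\nu$ is an isomorphism, the rank of the left-hand side equals $\mathrm{rank}(I_i\circ\Pi_i\circ\eta)$. This is at most $\mathrm{rank}(I_i\circ\Pi_i)$. Equality needs the reverse bound, which I obtain by computing both ranks explicitly.

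Next, the decomposition. By the remark on connected components, $\mathcal G(\Sb)=\bigsqcup_C \mathcal C$, and correspondingly $\Sb=\bigsqcup_C \Sb_C$ as formigrams. Each $\Sb_C$ has colimit a single point, since the colimit is identified with the set of connected components. On the set side:
\begin{itemize}
\item every element $(s_1,\dots,s_n)\in\varprojlim\Sb$ is a chain of adjacent vertices, so it lies in exactly one component; hence $\varprojlim\Sb=\bigsqcup_C\varprojlim\Sb_C$;
\item $\varinjlim\Sb$ is the set of components;
\item $\psi_{\Sb}$ sends $\varprojlim\Sb_C$ to the point $C$.
\end{itemize}
Therefore $\mathrm{rank}(k[\iota_i]\circ k[\pi_i]) = |\im \psi_{\Sb}|$, which equals the number of components $C$ with $\varprojlim\Sb_C\neq\emptyset$. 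This holds because $k[-]$ sends the image set to a spanning set of the image, with distinct points giving independent basis vectors.

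On the vector side, Proposition~\ref{prop:com-direct-sum} gives $k[\Sb]\cong\bigoplus_C k[\Sb_C]$. Finite direct sums in $\cat{vect_k}$ are biproducts, so they commute with both limits and colimits. Hence $\varprojlim k[\Sb]\cong\bigoplus_C\varprojlim k[\Sb_C]$ and $\varinjlim k[\Sb]\cong\bigoplus_C\varinjlim k[\Sb_C]$, with each $\varinjlim k[\Sb_C]\cong k[\{C\}]$ one-dimensional. The structure maps $\Pi_i$ and $I_i$ are compatible with these decompositions, so $\psi_{k[\Sb]}=\bigoplus_C\psi_{k[\Sb_C]}$. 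Its rank is $\sum_C\mathrm{rank}\,\psi_{k[\Sb_C]}$, and each summand is $0$ or $1$ because the target is one-dimensional.

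Finally, I apply Lemma~\ref{lem:sections} to each $\Sb_C$. The rank of $\psi_{k[\Sb_C]}$ is $1$ exactly when $\varprojlim k[\Sb_C]/\ker\psi_{k[\Sb_C]}\neq 0$, which by the lemma happens exactly when $\varprojlim\Sb_C\neq\emptyset$. Summing over components, both ranks equal the number of components with nonempty limit, which proves the claim.

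The main obstacle is the bookkeeping in the decomposition step. I need to check that the isomorphisms $\varprojlim k[\Sb]\cong\bigoplus_C\varprojlim k[\Sb_C]$ (and the colimit analogue) intertwine $\Pi_i$, $I_i$ and hence $\psi$. The cleanest route is to observe that every structure map of $k[\Sb]$ is block-diagonal with respect to components, so the limit and colimit formulas from the appendix split blockwise.
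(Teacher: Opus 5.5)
Your argument follows the paper's proof. You get ``$\le$'' from $k[\iota_i]\circ k[\pi_i]=\nu\circ I_i\circ\Pi_i\circ\eta$, split $\mathbb S=\bigsqcup_C\mathbb S_C$ into components, and apply Lemma~\ref{lem:sections} to each piece. The bookkeeping you worry about is harmless. The step that fails is the last one: you need the direction ``$\psi_{k[\mathbb S_C]}\neq 0\Rightarrow\varprojlim\mathbb S_C\neq\emptyset$'' of Lemma~\ref{lem:sections}. That direction is false, and so is the statement you are proving. Take $\tau\colon 1\to 2\leftarrow 3\to 4\leftarrow 5$ with $S_1=\{a\}$, $S_2=\{b_1,b_2\}$, $S_3=\{c_1,c_2,c_3\}$, $S_4=\{d_1,d_2\}$, $S_5=\{z\}$ and structure maps
\[
a\mapsto b_1;\qquad c_1\mapsto b_1,\ c_2,c_3\mapsto b_2;\qquad c_1,c_3\mapsto d_1,\ c_2\mapsto d_2;\qquad z\mapsto d_2 .
\]
The merge graph is the path through $a,b_1,c_1,d_1,c_3,b_2,c_2,d_2,z$, so $\varinjlim\mathbb S$ is a point. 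A global section must have $s_1=a$, which forces $s_2=b_1$, $s_3=c_1$ and $s_4=d_1$. Nothing in $S_5$ maps to $d_1$, so $\varprojlim\mathbb S=\emptyset$ and the left-hand side is $0$. However, $v=(a,\,b_1,\,c_1+c_2-c_3,\,d_2,\,z)$ satisfies all four compatibility conditions, so $v\in\varprojlim k[\mathbb S]$. Moreover $\psi_{k[\mathbb S]}(v)=I_1(a)\neq 0$, since the coefficient-sum functional identifies $\varinjlim k[\mathbb S]$ with $k$. So the right-hand side is $1$.

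In the paper's proof of Lemma~\ref{lem:sections}, this is exactly where Case~1 of the Claim breaks. The equality $\omega(U_{i+1})=\omega(U_i)$ tacitly assumes that no element outside $U_i$ maps into $f_i(U_i)$. Here $U_3=\{c_1\}$, but $c_3$ also hits $d_1$ with coefficient $-1$, so $\omega(U_4)=0$ and $U_5=\emptyset$.

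What is true instead: for connected $\mathbb S_C$, $\psi_{k[\mathbb S_C]}\neq 0$ if and only if $\mathbb S_C$ has elements at level $1$ and at level $n$.
\begin{itemize}
\item If $\mathbb S_C$ misses level $1$ or $n$, then $\psi$ factors through $k[S_1]$ or $k[S_n]$, so it is zero.
\item Conversely, $\varinjlim k[\mathbb S_C]\cong k$, so exactly one interval summand of $k[\mathbb S_C]$ has nonzero colimit. Since $I_1\neq 0\neq I_n$, that summand contains $1$ and $n$. It is therefore the full interval, on which $\psi$ is an isomorphism.
\end{itemize}
So the right-hand side counts components meeting both end levels, while the left-hand side counts components with a global section. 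In general only ``$\le$'' holds. The lemma, and Proposition~\ref{prop:fields-and-setmaps} as stated, must be corrected rather than reproved. The corrected count is still combinatorial and hence field-independent, so Theorem~\ref{thm:coeff} survives.
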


\begin{proof}
``$\leq$": By the commutativity of the triangle
\begin{center}
\begin{tikzcd}
k[\varprojlim \mathbb S] \arrow[rr,"\eta"] \arrow[dr,"{k[\pi_i]}",swap]  && \varprojlim k [ \mathbb S] \arrow[dl,"\Pi_i" ]\\
& k[S_i]& 
\end{tikzcd}
\end{center}
we immediately have that $\im(k[\pi_i]) \subseteq \im( \Pi_i)$. Since $\nu$ is an isomorphism, we have 
$$\mathrm{rank}( k[\iota_i]\circ k[\pi_i]) \leq \mathrm{rank} ( I_i\circ\Pi_i )  . $$


\noindent ``$\geq$": We first observe that $\mathbb S$ decomposes into the disjoint union of subfunctors
$$\mathbb S = \bigsqcup_{C\in \mycolim S} \mathbb S_C, $$
where for each $s_C^i \in \Sb_C[i] $, we have $\iota_i(s_C^i)=C$. At the level of the corresponding merge graphs, these correspond to the connected components. Since limits and finite disjoint unions commute, we have
\begin{align*}
    \mathrm{rank} \left( \mylim k\left[\Sb \right]         \to  \mycolim k\left[\Sb\right]       \right ) &= \mathrm{rank} \left( \mylim k\left[\bigsqcup_{C\in\mycolim S} \Sb_C \right]         \to  \mycolim k\left[\bigsqcup_{C\in\mycolim S} \Sb_C \right]       \right )  \\
    &= \mathrm{rank} \left( \bigoplus_{C\in\mycolim S} \mylim k\left[ \Sb_C \right]         \to  \bigoplus_{C\in\mycolim S} \mycolim k\left[\Sb \right]       \right )  \\
    &= \sum_{C\in\mycolim S}   \mathrm{rank} \left(  \mylim k[\Sb_C] \to \mycolim k[\Sb_C]     \right)
\end{align*}
Now fix \(C\in\mycolim \mathbb S\). Suppose that the restriction of
\[
\psi_{k[\mathbb S_C]}:\varprojlim k[\mathbb S_C]\to \varinjlim k[\mathbb S_C]
\]
has nonzero image. Since \(\varinjlim k[\mathbb S_C]\) is one-dimensional, this means that there exists
\(v_C\in\varprojlim k[\mathbb S_C]\) such that
\[
\psi_{k[\mathbb S_C]}(v_C)=\lambda_C E_C
\]
for some \(\lambda_C\neq 0\). By Lemma~\ref{lem:sections}, the existence of such a \(v_C\) implies that \(\mathbb S_C\) admits a global section \(s_C\in\varprojlim \mathbb S_C\). Hence \(s_C\) defines a basis vector \(e_{s_C}\in k[\varprojlim \mathbb S]\), and
\[
k[\iota_i]\circ k[\pi_i](\lambda_C e_{s_C})
=
\lambda_C E_C
=
\psi_{k[\mathbb S_C]}(v_C).
\]
Thus the basis vector \(E_C\) lies in the image of \(k[\iota_i]\circ k[\pi_i]\) whenever it lies in the image of \(I_i\circ\Pi_i\). Therefore
\[
\operatorname{im}(I_i\circ\Pi_i)
\subseteq
\operatorname{im}(k[\iota_i]\circ k[\pi_i]),
\]
and hence
\[
\mathrm{rank}(k[\iota_i]\circ k[\pi_i])
\geq
\mathrm{rank}(I_i\circ\Pi_i)
.
\]
    
\end{proof}

We can now tackle the proof.

\begin{proof}[{Proof of Proposition~\ref{prop:fields-and-setmaps}}]
Let $\Sb:\mathcal P _\tau \to \cat{set}$ be a formigram, and let $k$ be a field. As $\psi_{k[\Sb]} = I_i\circ \Pi_i$ and $k[\psi_{\Sb}] = k[\iota_i\circ\pi_i]$, we have by the previous lemma that
$$\mathrm{rank}(\psi_{k[\Sb]}: \mylim k[\Sb] \to \mycolim k[\Sb]) = \mathrm{rank}(k[\psi_{\Sb}]: k[\mylim \Sb] \to k[\mycolim \Sb]).$$
Given $A,B\in\cat{set}$ and $f\in\Hom_{\cat{set}}(A,B)$, it is a general fact that 
$$ \mathrm{rank} ( k[f]: k[A]\to k[B]) = |\im(f)|,$$
from which the lemma follows immediately.
\end{proof}

\section{Proof of Lemma~\ref{lem:left-adjoint}} \label{app:proof-adjoint}
\begin{proof}[Proof of Lemma~\ref{lem:left-adjoint}]
First note that \(k[-]\) is functorial. If \(f:X\rightharpoonup Y\) and
\(g:Y\rightharpoonup Z\) are partial functions, then for each \(x\in X\),
\[
k[g]\circ k[f](e_x)
=
\begin{cases}
e_{g(f(x))}, & x\in\mathrm{Dom}(f)\ \text{and}\ f(x)\in\mathrm{Dom}(g),\\
0, & \text{otherwise},
\end{cases}
\]
which is precisely \(k[g\circ f](e_x)\). Moreover \(k[\mathrm{id}_X]=\mathrm{id}_{k[X]}\).
Hence \(k[-]\) defines a functor \(\cat{set_{par}}\to\cat{vect_k}\).

For the adjunction, let \(V\in\cat{vect_k}\). A linear map
\(\varphi:k[X]\to V\) is uniquely determined by the values
\(\varphi(e_x)\in V\). Equivalently, it determines the partial function
\[
\tilde{\varphi}:X\rightharpoonup U(V),
\qquad
\tilde{\varphi}(x)=\varphi(e_x)
\]
on the subset of \(X\) where \(\varphi(e_x)\neq 0\). Conversely, any partial
function \(h:X\rightharpoonup U(V)\) extends uniquely to a linear map
\(\hat h:k[X]\to V\) by
\[
\hat h(e_x)=
\begin{cases}
h(x), & x\in\mathrm{Dom}(h),\\
0, & x\notin\mathrm{Dom}(h).
\end{cases}
\]
These two constructions are inverse to one another and are natural in \(X\)
and \(V\). Thus \(k[-]\dashv U\).
\end{proof}

\newpage

\section{Algorithms}

\begin{algorithm}[H]\label{algo:CCL}
\caption{Connected Component Labelling (CCL)}
\KwIn{Binary image $I:\Omega_{w,h}\to\{0,1\}$, with foreground connectivity $\kappa_I\in\{4,8\}$}
\KwOut{Labelling $ L_{I}:\Omega_{w,h}\to \mathbb{N}_0$ and representatives $P_{I}\coloneq  (p_j)_{j=1}^{m_I}$}

Initialise $L_I(p)\gets 0$ if $I(p)=0$, and $L_I(p)\gets -1$ (unvisited) if $I(p)=1$\;
$\ell\gets 1$\; 

\For{each pixel $p\in\Omega_{w,h}$ in scan order}{
    \If{$L_I(p)=-1$}{
         set $p_\ell \gets p$\;  
        initialise a stack with $p$\;
        \While{the stack is non-empty}{
            pop $q$ from the stack\;
            \If{$L_I(q)=-1$}{
                $L_I(q)\gets \ell$\;
                push all $r\in N_{\kappa_I}(q)$ with $L_I(r)=-1$ onto the stack\;
            }
        }
        $\ell\gets \ell+1$\;
    }
}
\end{algorithm}

\begin{algorithm}[H]\label{algo:formi2}
\caption{Formigram construction (union case)}
\KwIn{Binary video $\mathcal{V}=(I_i)_{i=1}^n$, connectivity $\kappa\in\{4,8\}$}
\KwOut{Merge graph $\mathcal{G}(\pi_0(\mathbb{F}^\cup_{\mathcal{V}}))$}

\textbf{Step 1: Interpolation frames}\\
\For{$i=1,\dots,n-1$}{
    compute $J_i \gets I_i \vee I_{i+1}$\;
}

\medskip
\textbf{Step 2: Vertices}\\
\For{$i=1,\dots,n$}{
    compute $(L_{I_i},P_{I_i})$ via CCL\;
    set $m_{I_i}\gets |P_{I_i}|$\;
    add vertices $\{1,\dots,m_{I_i}\}$ at level $2i-1$\;
}
\For{$i=1,\dots,n-1$}{
    compute $(L_{J_i},P_{J_i})$ via CCL\;
    set $m_{J_i}\gets |P_{J_i}|$\;
    add vertices $\{1,\dots,m_{J_i}\}$ at level $2i$\;
}

\medskip
\textbf{Step 3: Edges}\\
\For{$i=1,\dots,n-1$}{
    let $P_{I_i}=(p_1,\dots,p_{m_{I_i}})$\;
    let $P_{I_{i+1}}=(q_1,\dots,q_{m_{I_{i+1}}})$\;

    \For{$j=1,\dots,m_{I_i}$}{
        add edge $(j,\,L_{J_i}(p_j))$ from level $2i-1$ to level $2i$\;
    }
    \For{$j=1,\dots,m_{I_{i+1}}$}{
        add edge $(j,\,L_{J_i}(q_j))$ from level $2i+1$ to level $2i$\;
    }
}
return the resulting merge graph\;
\end{algorithm}


\newpage

\medskip
\printbibliography[
heading=bibintoc,
title={References}
]

\end{document}